\newtheorem{theorem}{Theorem}[section]
\newtheorem{definition}{Definition}[section]
\newtheorem{lemma}{Lemma}[section]
\newtheorem{proposition}{Proposition}[section]
\newtheorem{remark}{Remark}[section]
\newtheorem{example}{Example}[section]
\numberwithin{equation}{section}
\begin{document}

\title[Critical curve for semilinear Euler-Poisson-Darboux-Tricomi equations]{Critical curve for weakly coupled system of  semilinear  Euler-Poisson-Darboux-Tricomi equations}

\author[Y.- Q. Li]{Yuequn Li}
\address[Yuequn Li]{
School of Mathematical Sciences, Nanjing Normal University, Nanjing 210023, China}
\email{yqli1214@163.com}

\author[Fei Guo]{Fei Guo}
\address[Fei Guo]{
School of Mathematical Sciences and Key Laboratory for NSLSCS,
Ministry of Education, Nanjing Normal University,
Nanjing 210023, China}
\email{guof@njnu.edu.cn}


\begin{abstract}
This paper investigates a weakly coupled system of semilinear Euler-Poisson-Darboux-Tricomi equations (EPDTS) with power-type nonlinear terms.
More precisely, in the case where the damping terms dominate over the mass terms, the critical curve in the $p-q$ plane that delineates the threshold between global existence and blow-up for the EPDTS is given by
\begin{equation*}
\Gamma_m(n,p,q,\beta_1,\beta_2)=0,
\end{equation*}
where $\Gamma_m$ is defined by (\ref{gammam}).
Through the construction of new test functions, the blow-up problem is addressed when $\Gamma_m(n,p,q,\beta_1,\beta_2)\geq0$.  Based on the $(L^1\cap L^2)-L^2$  estimates of  the  solution to  the corresponding linear equation established in our previous work \cite{LiGuo2025}, we derive the global existence of solutions with  small initial data when $\Gamma_m(n,p,q,\beta_1,\beta_2)<0$, provided that the damping terms prevail over the mass terms.
\end{abstract}

\maketitle


\noindent {\sl Keywords\/}: Semilinear  Euler-Poisson-Darboux-Tricomi equations; Weakly coupled system; Critical curve; Test functions; $(L^1\cap L^2)-L^2$  estimates.

\vskip 0.2cm

\noindent {\sl AMS Subject Classification} (2020): 35B44, 35G55, 35L52\\

\section{Introduction}\label{s1}
This paper investigates the critical curve problem for the weakly coupled system of  semilinear  Euler-Poisson-Darboux-Tricomi equations
\begin{equation}\label{eqs}
  \left\{
\begin{aligned}
&\partial_t^2u-t^{2m}\Delta u+\frac{\mu_1}{t}\partial_tu+\frac{\nu_1^2}{t^2}u=|v|^p,&\quad& t>1, \  x\in \mathbb{R}^n,\\
&\partial_t^2v-t^{2m}\Delta v+\frac{\mu_2}{t}\partial_tv+\frac{\nu_2^2}{t^2}v=|u|^q,&\quad& t>1,\  x\in \mathbb{R}^n, \\
&(u, \partial_tu, v, \partial_tv)(1,x)= (u_0,u_1,v_0,v_1)(x),               &\quad& x\in\mathbb{R}^n,
\end{aligned}
 \right.
  \end{equation}
where $m>-1$, $n\geq1$ denotes the space dimension, $\mu_i,\nu_i^2$, $ i=1,2$ are  nonnegative numbers and $p,q>1$. We find that there exists a curve related to $\mu_i,\nu_i$,  $i=1,2$  in $p-q$ plane, which can be used to distinguish the blow-up phenomenon and the global existence of solutions to (\ref{eqs}).

The critical exponent for the classical semilinear wave equation
\begin{equation}\label{str}
\partial_t^2u-\Delta u=\vert u\vert^p
\end{equation}
is the well-known Strauss exponent $p_S(n)$, where $n\geq1$ denotes the space dimension. Namely, local solutions with small initial data  exist globally when $p>p_S(n)$ and the blow-up phenomenon occurs if $1<p\leq p_S(n)$ no matter how small the initial data may be.  Here $p_S(1)=+\infty$, and for $n\geq2$, $p_S(n)$ is the positive root of the quadratic equation
\begin{equation}\label{strausseq}
(n-1)p^2-(n+1)p-2=0,
 \end{equation}
see \cite{De1997, GeLiSo1997, Glassey1981,  IkSoWa2019,John1979, Strauss1981, Ta2001, Zh2006, Zhou2007, Zhou2014} for  details. Following  the basic resolution of the Strauss  exponent problem, many scholars \cite{ AgKuTa2000,  De1997, DeGeMi1997, DeMi1998, GeTaZh2006, IkSoWa2019,KuTaWa2012}  have  studied the weakly coupled system of semilinear wave equations
\begin{equation}\label{weak1}
  \left\{
\begin{aligned}
&\partial_t^2u-\Delta u=|v|^p,&\quad& t>0, \  x\in \mathbb{R}^n,\\
&\partial_t^2v-\Delta v=|u|^q,&\quad& t>0,\  x\in \mathbb{R}^n, \\
&(u,\partial_tu,v,\partial_tv)(0,x)= (u_0,u_1,v_0,v_1)(x),               &\quad& x\in\mathbb{R}^n
\end{aligned}
 \right.
  \end{equation}
with $p,q>1$. The critical curve for (\ref{weak1}) in the $p-q$ plane is described by $\Gamma_W(n,p,q)=0$, where
\begin{equation}\label{curve1}
\Gamma_W(n,p,q):=\max\Big\{\frac{p+2+q^{-1}}{pq-1},\frac{q+2+p^{-1}}{pq-1}\Big\}-\frac{n-1}{2}.
\end{equation}
Roughly speaking, the small data global solutions to (\ref{weak1}) exist when $\Gamma_W(n,p,q)<0$ , while if  $\Gamma_W(n,p,q)\geq0$, solutions  blow up in finite time.
Observe that $\Gamma_W(n,p,p)=0$ if and only if (\ref{strausseq}) holds, so the critical curve problem for (\ref{weak1}) can be regarded as a generalization of the Strauss exponent problem.
However,  the study of coupled system (\ref{weak1}) is not a simple generalization of the results of its single equation (\ref{str}). On the contrary, the critical curve described by $\Gamma_W(n,p,q)\geq0$ expands the blow-up range compared to the single case: $1<p\leq p_S(n), 1<q\leq p_S(n)$.  In fact, note that
\begin{equation}\label{1leqpleqpsn}
1<p\leq p_S(n)\iff \frac{1+p^{-1}}{p-1}\geq\frac{n-1}{2}
\end{equation}
and
\begin{equation}\label{2leqpleqpsn}
\max\Big\{\frac{p+2+q^{-1}}{pq-1},\frac{q+2+p^{-1}}{pq-1}\Big\}\leq \max\Big\{\frac{1+p^{-1}}{p-1},\frac{1+q^{-1}}{q-1}\Big\}
\end{equation}
hold,  where the equalities  in (\ref{1leqpleqpsn}) and (\ref{2leqpleqpsn})  hold  if and only if $p=q$.  This observation indicates that there may exist some $p$ or $q$ greater than $p_S(n)$ such that  $\Gamma_{W}(n,p,q)\geq0$ (the blow-up region). For example, for $n=3$, choosing $p=2<p_S(3)=1+\sqrt{2}<q=2.5$, a simple calculation yields $\Gamma_{W}(3,2,2.5)=0.25>0$.

Consider the weakly coupled system of semilinear damped wave equations
\begin{equation}\label{weak2}
  \left\{
\begin{aligned}
&\partial_t^2u-\Delta u+b(t)\partial_tu=|v|^p,&\quad& t>0, \  x\in \mathbb{R}^n,\ p>1,\\
&\partial_t^2v-\Delta v+b(t)\partial_tv=|u|^q,&\quad& t>0,\  x\in \mathbb{R}^n,\ q>1,  \\
&(u,v,\partial_tu,\partial_tv)(0,x)= (u_0,u_1,v_0,v_1)(x),               &\quad& x\in\mathbb{R}^n.
\end{aligned}
 \right.
  \end{equation}
If $b(t)=1,$ in contrast to (\ref{weak1}),  its critical curve is described by $\Gamma_{DW}(n,p,q)=0$, where \begin{equation*}
\Gamma_{DW}(n,p,q):=\max\Big\{\frac{p+1}{pq-1},\frac{q+1}{pq-1}\Big\}-\frac{n}{2},
\end{equation*}
highlighting  that the  appearance of the damping terms $\partial_tu, \partial_tv$ has a notable impact on the coupled system  (\ref{weak2}).  About the details, one can refer to \cite{Nara2011, Nara2009, NishWaka2014,NiWa2015, OgTa2010, SunWang2007}. Note $\Gamma_{DW}(n,p,p)=0\iff p=p_F(n)$, where $p_F(n):=1+\frac{2}{n}$ denotes the Fujita exponent, which is the critical exponent for the heat equation $\partial_tu-\Delta u=|u|^p$ and the single damped wave equation
\begin{equation}\label{singlewavedamped}
\partial_t^2u-\Delta u+\partial_tu=|u|^p,
\end{equation}
one can see \cite{Fujita1969, IkTa2005,ToYo2001, Zh2001}. Hence
we can claim that the critical curve $\Gamma_{DW}(n,p,q)=0$ generalizes the critical exponent for (\ref{singlewavedamped}).
Furthermore, in the case of general effective  damping,  i.e.,  $b(t)=(1+t)^{-\beta}$, $\beta\in(-1,1)$,   the authors \cite{NiWa2015} showed that  the critical curve  is still described by $\Gamma_{DW}(n,p,q)=0$. This explains why the damping term is referred to as \textit{effective} in this case. In addition, the authors   \cite{Mo2018, MoRe2018} established relevant results of  global solutions under different power nonlinearities  or  additional regularity assumptions on the initial data.

For the weakly coupled system of semilinear wave equations with scale-invariant damping and mass
\begin{equation}\label{weak3}
  \left\{
\begin{aligned}
&\partial_t^2u-\Delta u+\frac{\mu_1}{t}\partial_tu+\frac{\nu_1^2}{t^2}u=|v|^p,&\quad& t>1, \  x\in \mathbb{R}^n,\ p>1,\\
&\partial_t^2v-\Delta v+\frac{\mu_2}{t}\partial_tv+\frac{\nu_2^2}{t^2}v=|u|^q,&\quad& t>1,\  x\in \mathbb{R}^n,\ q>1,  \\
&(u,v,\partial_tu,\partial_tv)(1,x)= (u_0,u_1,v_0,v_1)(x),               &\quad& x\in\mathbb{R}^n.
\end{aligned}
 \right.
  \end{equation}
Chen-Palmieri \cite{ChenPa2019}  showed  that,  in the case of
\begin{equation}\label{deltai}
\delta_i:=(\mu_i-1)^2-4\nu_i^2\geq(n+1)^2,\ i=1,2,
\end{equation}
the critical curve for (\ref{weak3}) is  given by $\Gamma_0(n,p,q,\beta_1,\beta_2)=0$, where
\begin{equation}\label{E}
\Gamma_0(n,p,q,\beta_1,\beta_2):=\max\Big\{\frac{p+1}{pq-1}-\frac{\beta_1-1}{2},\frac{q+1}{pq-1}-\frac{\beta_2-1}{2}\Big\}-\frac{n}{2},
\end{equation}
where $\beta_i=\frac{\mu_i+1-\sqrt{\delta_i}}{2}, i=1,2$.
Clearly, the study of the critical curve for  (\ref{weak3}) is based on the corresponding single power-type semilinear wave equation
\begin{equation}\label{single}
\partial_t^2u-\Delta u+\frac{\mu}{t}\partial_tu+\frac{\nu^2}{t^2}u=|u|^p.
\end{equation}
Since the linear equation of (\ref{single}) is invariant under the transformation  $\tilde{u}(\lambda t,\lambda x)$, the damping term $\frac{\mu}{t}\partial_tu$ and the mass term $\frac{\nu^2}{t^2}u$  can be viewed as having the same scaling.  Following Palmieri's analysis,  this balance induces an interaction where the relative sizes of $\mu$ and $\nu^2$  play a decisive role in determining whether the damping  term or the mass term prevails in (\ref{single}). In fact, the magnitude of the number $\delta:=(\mu-1)^2-4\nu^2$ can characterize this interaction. Specifically, Nascimento-Palmieri-Reissig  \cite{PaRe2017,Pa2018,PaRe2018} proved that for any $\delta\geq0,$ the local (in time) solutions to (\ref{single}) blow up  in finite time when $1<p\leq p_F(n+\frac{\mu-1-\sqrt{\delta}}{2})$  by  means of  the test function approach. They  then established the decay estimates of  solutions to the corresponding linear equation of (\ref{single}) through the application of Fourier analysis techniques, thus gave that the  solutions  to (\ref{single}) exist globally if  $p>p_F(n+\frac{\mu-1-\sqrt{\delta}}{2})$ for any $\delta\geq(n+2\sigma-1)^2$, where $\sigma$ is a positive number describing the  regularity of the initial data,  and $p_F$ is Fujita exponent mentioned above.   Roughly speaking,  when the damping term dominates over the mass term, in other words, $\delta$  is   relatively large, (\ref{single}) is parabolic-like as  $p_F(n+\frac{\mu-1-\sqrt{\delta}}{2})$ is its critical exponent.  In \cite{Tu2019}, Palmieri-Tu also proved the blow-up phenomenon if $1<p\leq p_S(n+\mu)$ for any $\delta\geq0$.  Palmieri-Reissig  \cite{PaRe2019}  proved that the solution to (\ref{single}) also blows up in finite time when $1<p\leq\max\{p_F(n+\frac{\mu-1-\sqrt\delta}{2}),p_S(n+\mu)\}$.   Moreover, for other ranges of $\delta$, partial results can be found in \cite{DaPa2021,PaRe2017,Pa2018,PaRe2019,PaRe2018,Tu2019}.  From the above results, it is clear that the value of $\delta$ plays a significant role  in the interplay between the Strauss exponent, the Fujita exponent, and  their balance.  It is also worth mentioning that the critical curve for  (\ref{weak3}) is   not merely  a straightforward extension of the corresponding results for the single case (\ref{single}), and this perspective  is explained comprehensively by Chen-Palmieri in \cite{ChenPa2019}.

In our recent paper  \cite{LiGuo2025}, we studied the single Euler-Poisson-Darboux-Tricomi equation with a power nonlinearity
\begin{equation}\label{single2}
  \left\{
\begin{aligned}
&\partial_t^2u-t^{2m}\Delta u+\frac{\mu}{t}\partial_tu+\frac{\nu^2}{t^2}u=|u|^p,&\quad& t>1, \ m>-1, \\
&u(1,x)= u_0(x), \ \partial_tu(1,x)= u_1(x),               &\quad& x\in\mathbb{R}^n.
\end{aligned}
 \right.
  \end{equation}
By applying Fourier analysis methods, the corresponding linear problem
\begin{equation}\label{single21}
\partial_t^2u-t^{2m}\Delta u+\frac{\mu}{t}\partial_tu+\frac{\nu^2}{t^2}u=0
\end{equation}
associated with (\ref{single2}) is reduced to  a Bessel equation in the frequency space.  Using the properties of Bessel functions, we obtained the $(L^1 \cap L^2) -L^2$ estimates of  the  solutions to  (\ref{single21}).   On this basis, we established  the global existence of solutions to (\ref{single2})  by Duhamel's principle and contraction mapping principle,   provided that $p>p_{F}((m+1)n+\frac{\mu-1-\sqrt\delta}{2})$ in the case of $\delta\geq(m+1)^2(n+2\sigma-1)^2$, where $\sigma>0$.
 Combining   this with the blow-up result established by Palmieri \cite{Pa2025}, namely,  solutions blow up in finite time if $1<p\leq p_{F}((m+1)n+\frac{\mu-1-\sqrt\delta}{2})$ for any $\delta\geq0$, we conclude that $p_{F}((m+1)n+\frac{\mu-1-\sqrt\delta}{2})$ is the critical exponent of (\ref{single}) if $\delta$ is large to a certain extent. As shown in \cite{LiGuo2025,Pa2025}, $\delta$  continues to serve as the measure of the interaction between the damping term and mass term in (\ref{single2}), playing a decisive role in identifying the critical exponent that distinguish between the  blow-up phenomenon and  the global existence of  solutions.

Comparing  the  blow-up results of the wave equation (\ref{single}) and its weakly coupled system (\ref{weak3}), we can infer by analogy  that the blow-up result for  (\ref{eqs}) is unlikely to be a simple rectangular region in the $p-q$ plane described by
\begin{equation*}
1<p\leq p_F\big((m+1)n+\frac{\mu_1-1-\sqrt{\delta_1}}{2}\big), \ \ 1<q\leq p_F\big((m+1)n+\frac{\mu_2-1-\sqrt{\delta_2}}{2}\big).
\end{equation*}
Instead, it should be determined by the interplay among the coefficients $\mu_i,\nu_i$$(i=1,2)$  of the damping terms, the mass terms, as well as the parameter $m$. Building on the relevant results established for the single equation (\ref{single2}), this paper focuses on studying the critical curve for (\ref{eqs}). Regarding the blow-up phenomenon, it is clear that the Gellerstedt operator $\partial_t^2-t^{2m}\Delta$ is different from the wave operator $\partial_t^2-\Delta$,  which compels us to construct new test functions  tailored for the system  (\ref{eqs}). Hence we construct  new test functions that depend on $m$ by the smooth cutoff function to prove the blow-up result when $\Gamma_m(n,p,q,\beta_1,\beta_2)\geq0$, where
\begin{equation}\label{gammam}
\Gamma_m(n,p,q,\beta_1,\beta_2):=\max\Big\{\frac{p+1}{pq-1}-\frac{\beta_1-1}{2},\frac{q+1}{pq-1}-\frac{\beta_2-1}{2}\Big\}-\frac{(m+1)n}{2}
\end{equation}
and $\beta_i,  i=1,2$ are defined by (\ref{E}). Conversely, we expect that if
\begin{equation}\label{converse}
\Gamma_m(n,p,q,\beta_1,\beta_2)<0,
 \end{equation}
the   small data solutions  to (\ref{eqs}) exist globally.  Based on  the $(L^1\cap L^2)-L^2$ estimates of  the  solutions to   the corresponding linear equation (\ref{single21}) provided in our previous work \cite{LiGuo2025}, we establish the global existence of  solutions to  (\ref{eqs}) for some large $\delta_i:=(\mu_i-1)^2-4\nu^2$,  $i=1,2$  in six cases by distinguishing the  necessary conditions of (\ref{converse}) and the regularity assumptions of the initial data.  According to the seven theorems presented in this paper, we  conclude that
$\Gamma_m(n,p,q,\beta_1,\beta_2)=0$
represents the critical curve for (\ref{eqs}) in the $p-q$ plane when $\delta_i, i=1,2$ are appropriately large.
Through our main results, we  also conclude that the study of the weakly coupled system (\ref{eqs}) is not merely a straightforward extension of the results for its single equation (\ref{single2}).  This point is explained in detail in Remark \ref{noeasy}.

\noindent\textbf{Notations.}

(1) $a \lesssim b$  denotes that  there exists a constant
$C>0$ such that $a\leq Cb$. 

(2) $\vert D\vert^\sigma$ denotes the pseudo-differential operators with symbol $\vert \xi\vert^\sigma$. $H^{\sigma}_p{(\mathbb{R}^n)}$, $\dot{H}^{\sigma}_p{(\mathbb{R}^n)}$ mean the non-homogeneous Sobolev space and homogeneous Sobolev space respectively,  equipped with the norm $\Vert f\Vert_{H^{\sigma}_p{(\mathbb{R}^n)}}=\Vert(1+\vert D\vert^2)^{\frac{\sigma}{2}}f\Vert_{L^p(\mathbb{R}^n)}$, $\Vert f\Vert_{\dot{H}^{\sigma}_p{(\mathbb{R}^n)}}=\Vert\vert D\vert^{\sigma}f\Vert_{L^p(\mathbb{R}^n)}$. For simplicity in writing, we omit $\mathbb{R}^n$.


(3) $\lceil \cdot\rceil$ is the ceiling function, i.e., $\lceil x\rceil:=\min\{k\in\mathbb{Z}: x\leq k\}$; $[\cdot]_+$  is the positive part function, i.e., $[x]_+:=\max\{x,0\}$.

(4) $B_r$ represents the  ball centered at the origin with radius $r$, that is $B_r=\{x\in\mathbb{R}^n:\vert x\vert\leq r\}$.

This paper is organized as follows. In Section \ref{section2}, we provide the definition of the energy solution to (\ref{eqs}) and present the main results.  Section \ref{section3blowup} is devoted to give the blow-up result, that is Theorem \ref{theorem1}.  In Section \ref{section4high},  we list the results established in our recent work \cite{LiGuo2025},  then we establish the  global well-posedness with high regularity of initial data,  i.e., Theorems \ref{theorem11}-\ref{theorem13}.  And we  establish the  global well-posedness with low regularity of initial data in Section 5,  i.e., Theorems \ref{theorem21}-\ref{theorem23}. In the appendix, we provide a brief outline of the proof of  the local  (in time) existence of solutions to (\ref{eqs}), namely,  Proposition \ref{localexistence}.

\section{Main results}\label{section2}

In this section, we present the blow-up result and global existence results for (\ref{eqs}), thereby determining its critical curve.
We first give the definition of energy solution to (\ref{eqs}). Let us introduce the space
\begin{equation}\label{norm1}
D^\sigma=\left\{
\begin{aligned}
&\bigl(L^1\cap H^\sigma\bigr)\times\bigl(L^1\cap H^{\sigma-1}\bigr),&\quad &\sigma\geq1,\\
&\bigl(L^1\cap H^\sigma\bigr)\times\bigl(L^1\cap L^2\bigr), &\quad &\sigma\in (0,1)
\end{aligned}
 \right.
\end{equation}
with the norm
\begin{equation}\label{norm2}
\Vert(f,g)\Vert_{D^\sigma}=\left\{
\begin{aligned}
&\Vert f\Vert_{L^1}+\Vert f\Vert_{H^\sigma}+\Vert g\Vert_{L^1}+\Vert g\Vert_{H^{\sigma-1}},&\quad&\sigma\geq1,\\
&\Vert f\Vert_{L^1}+\Vert f\Vert_{H^\sigma}+\Vert g\Vert_{L^1}+\Vert g\Vert_{L^{2}}, &\quad &\sigma\in (0,1),
\end{aligned}
 \right.
\end{equation}
and $D^1:=D.$

\begin{definition}\label{defenergysolution}
Let $(u_0,u_1,v_0,v_1)\in D\times D$ be compactly supported with
\begin{equation}\label{compact1}
\text{supp} (u_0,u_1,v_0,v_1)\subset B_M(0)
\end{equation}
for some $M>0$.
We say that  $(u,v)$ is an energy solution to (\ref{eqs}) on $[1,T)$ if
\begin{align}
&(u,v)\in \Big(C\bigl([1,T); H^1(\mathbb{R}^n)\bigr) \cap C^1\bigl([1,T); L^2(\mathbb{R}^n)\bigr)\Big)^2,\label{solutiondefu}
\end{align}
and it satisfies  the support property
\begin{equation}\label{compact2}
\text{supp} (u,v)(t,\cdot)\subset B_{\phi_m(t)-\phi_m(1)+M}, \ \phi_m(t)=\frac{t^{m+1}}{m+1}, \  \text{for any}\ t\in(1,T)
\end{equation}
as well as the following integral equalities
\begin{equation}\label{def3}
\resizebox{   0.912\hsize}{!}{$  \begin{split}
&\iint_{[1,T)\times\mathbb{R}^n}u(t,x)\Bigl(\partial_t^2\varPhi_1(t,x)-t^{2m}\Delta\varPhi_1(t,x)
-\partial_t\bigl(\frac{\mu_1}{t}\varPhi_1(t,x)\bigr)+\frac{\nu_1^2}{t^2}\varPhi_1(t,x)\Bigr)dxdt\\
&=\int_{\mathbb{R}^n}\Bigl(-u_0(x)\partial_t\varPhi_1(1,x)+\bigl(\mu_1 u_0(x)+u_1(x)\bigr)\varPhi_1(1,x)\Bigr)dx\\
&\quad+\iint_{[1,T)\times\mathbb{R}^n}\vert v(t,x)\vert^p\varPhi_1(t,x)dxdt
\end{split}     $}
\end{equation}
and
\begin{equation}\label{def32}
\resizebox{   0.912\hsize}{!}{$  \begin{split}
&\iint_{[1,T)\times\mathbb{R}^n}v(t,x)\Bigl(\partial_t^2\varPhi_2(t,x)-t^{2m}\Delta\varPhi_2(t,x)
-\partial_t\bigl(\frac{\mu_2}{t}\varPhi_2(t,x)\bigr)+\frac{\nu_2^2}{t^2}\varPhi_2(t,x)\Bigr)dxdt\\
&=\int_{\mathbb{R}^n}\Bigl(-v_0(x)\partial_t\varPhi_2(1,x)+\bigl(\mu_2 v_0(x)+v_1(x)\bigr)\varPhi_2(1,x)\Bigr)dx\\
&\quad+\iint_{[1,T)\times\mathbb{R}^n}\vert u(t,x)\vert^q\varPhi_2(t,x)dxdt,
\end{split}     $}
\end{equation}
for any  $\varPhi_1(t,x),\varPhi_2(t,x)\in C_0^{\infty}\bigl([1,T)\times \mathbb{R}^n\bigr)$.
\end{definition}
\begin{remark}
The support property (\ref{compact2}) of the solution $(u,v)$ reflects the finite propagation speed of the wave, which has been explained in \cite{He2017II, He2017I, Pa2025}.
\end{remark}

\begin{proposition}{\bf{\text{(Local existence).}}}\label{localexistence}
Let $n\geq1, m>-1$, $\mu_i,\nu_i^2\geq0$, $i=1,2$ such that $\delta_1,\delta_2>0$, where $\delta_i,i=1,2$ are defined by (\ref{deltai}).  Suppose the  initial data $(u_0,u_1,v_0,v_1)\in D^\sigma\times D^\sigma$ with some $\sigma>0$ and satisfy the support condition (\ref{compact1}).
\begin{equation*}
\text{If}\ \ \sigma\geq1,\ \text{we assume}\ p,q>\lceil \sigma\rceil\ \text{and}\ \left
\{
\begin{aligned}
&1< p,q\leq1+\frac{2}{n-2\sigma}, \ &n>2\sigma,\\
&1< p,q<+\infty,\ & n\leq2\sigma;
\end{aligned}
\right.
\end{equation*}
\begin{equation*}
\text{if}\ \ 0<\sigma<1,\ \text{we assume}\ \left
\{
\begin{aligned}
&1< p,q\leq\frac{n}{n-2\sigma}, \ &n>2\sigma,\\
&1< p,q<+\infty,\ & n\leq2\sigma.
\end{aligned}
\right.
\end{equation*}
Then there exist a $T>1$ and a unique solution
\begin{equation*}
(u,v)\in
\left
\{
\begin{aligned}
&\Big(C\big([1,T);H^\sigma\big)\cap C^1\big([1,T);H^{\sigma-1}\big)\Big)^2,\ & \sigma\geq1,\\
&\Big(C\big([1,T);H^\sigma\big)\Big)^2,\ & 0<\sigma<1
\end{aligned}
\right.
\end{equation*}
to (\ref{eqs}). Moreover,  $(u,v)$ satisfies the support property (\ref{compact2}) and the  integral equalities (\ref{def3}), (\ref{def32}).
\end{proposition}


\begin{theorem}{\bf{\text{(Blow-up).}}}\label{theorem1}
Let $n\geq1,m>-1$, $\nu_1^2,\nu_2^2\geq0$ and
$$
\left\{
\begin{aligned}
&\mu_1,\mu_2>1,&\ &\text{if}\ -1<m<0,\\
&\mu_1,\mu_2>0, &\ &\text{if}\ m\geq0
\end{aligned}
\right.
$$ such that $\delta_1,\delta_2>0$,  where
$\delta_1,\delta_2$ are defined by (\ref{deltai}).
Suppose that  $(u_0,u_1,v_0,v_1)\in D\times D$ are  initial data satisfying the support condition (\ref{compact2}) and
\begin{equation}\label{integral>0}
\begin{aligned}
&\liminf\limits_{R\rightarrow+\infty}\int_{\vert x\vert<R}\Bigl( \frac{\mu_1-1+\sqrt{\delta_1}}{2}u_0(x)+u_1(x)\Bigr)dx>0,\\
&\liminf\limits_{R\rightarrow+\infty}\int_{\vert x\vert<R}\Bigl( \frac{\mu_2-1+\sqrt{\delta_2}}{2}v_0(x)+v_1(x)\Bigr)dx>0.
\end{aligned}
\end{equation}
If $p,q>1$ satisfy
\begin{align}
&p>\frac{1+\beta_1}{2m+1+\beta_2},\ q>\frac{1+\beta_2}{2m+1+\beta_1}\label{keycon2}
\end{align}
and
\begin{align}
&\Gamma_m(n,p,q,\beta_1,\beta_2)\geq0,\label{keycon1}
\end{align}
then the energy solution $(u,v)$ to (\ref{eqs}) blows up in finite time, where $\Gamma_m(n,p,q,\beta_1,\beta_2)$ is defined by (\ref{gammam}).
\end{theorem}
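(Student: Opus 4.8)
The plan is to prove blow-up by the test-function (weak-formulation) method, reduce it to a coupled system of Euler-type ordinary differential inequalities for the spatial averages of $u$ and $v$, and then iterate lower bounds, exploiting the finite propagation speed encoded in the support property (\ref{compact2}). The decisive new ingredient, and the step I expect to be hardest, is the construction of test functions adapted to the Gellerstedt operator $\partial_t^2-t^{2m}\Delta$: in (\ref{def3})--(\ref{def32}) I would take separated functions $\varPhi_i(t,x)=\rho_i(t)\varphi_R(x)$, where $\rho_i(t)=t^{\beta_i}$ is the relevant solution of the adjoint Euler equation $\rho''-\partial_t(\tfrac{\mu_i}{t}\rho)+\tfrac{\nu_i^2}{t^2}\rho=0$ (whose indicial roots are $\tfrac{\mu_i+1\pm\sqrt{\delta_i}}{2}$, the smaller being exactly $\beta_i$), and $\varphi_R$ is a smooth cutoff whose radius is tied to the light cone $\phi_m(t)=\tfrac{t^{m+1}}{m+1}$. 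The $m$-dependence of this cutoff, and the control of the resulting error term $t^{2m}\rho_i\,\Delta\varphi_R$ concentrated on the cutoff annulus, is the core technical obstacle, because the degenerate coefficient $t^{2m}$ must be balanced against the cone geometry.

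With these test functions in hand, and using that $(u,v)(t,\cdot)$ has compact support so that $\int t^{2m}\Delta u\,dx=0$, I would reduce to
\begin{equation*}
F''+\tfrac{\mu_1}{t}F'+\tfrac{\nu_1^2}{t^2}F=\int_{\R^n}|v|^p\,dx\ge0,\qquad G''+\tfrac{\mu_2}{t}G'+\tfrac{\nu_2^2}{t^2}G=\int_{\R^n}|u|^q\,dx\ge0,
\end{equation*}
where $F(t)=\int_{\R^n}u\,dx$ and $G(t)=\int_{\R^n}v\,dx$. Factoring each Euler operator through its indicial roots $\lambda^{(i)}_{\pm}=\tfrac{1-\mu_i\pm\sqrt{\delta_i}}{2}$ (so that $\lambda^{(i)}_+=1-\beta_i$), the coefficient of the dominant mode $t^{\lambda^{(i)}_+}$ is precisely the weighted combination $F'(1)-\lambda^{(1)}_-F(1)$, i.e.\ the quantity appearing in (\ref{integral>0}); hence the sign assumptions (\ref{integral>0}) yield the base lower bounds $F(t)\gtrsim t^{1-\beta_1}$ and $G(t)\gtrsim t^{1-\beta_2}$ for $t$ large.

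Next I would run the coupled iteration. By H\"older's inequality and the support bound $|\mathrm{supp}\,v(t,\cdot)|\lesssim \phi_m(t)^n\lesssim t^{(m+1)n}$ one gets $\int|v|^p\,dx\gtrsim G^p\,t^{-(m+1)n(p-1)}$, and symmetrically for $\int|u|^q\,dx$. Inserting $G\gtrsim t^{b_k}$ into the $F$-equation and integrating the Euler operator twice (which raises the $t$-power of a forcing term by $2$) gives $F\gtrsim t^{a_{k+1}}$ with
\begin{equation*}
a_{k+1}=p\,b_k-(m+1)n(p-1)+2,\qquad b_{k+1}=q\,a_k-(m+1)n(q-1)+2 .
\end{equation*}
The two-step map has slope $pq>1$, so the exponents diverge exactly when the starting value $a_0=1-\beta_1$ lies above the fixed point $a^{*}=(m+1)n-\tfrac{2(p+1)}{pq-1}$; a direct rearrangement shows $a_0>a^{*}$ is equivalent to $\tfrac{p+1}{pq-1}-\tfrac{\beta_1-1}{2}>\tfrac{(m+1)n}{2}$, i.e.\ positivity of the first branch of $\Gamma_m$ in (\ref{gammam}), and symmetrically for $b_k$ and the second branch. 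Thus the strict case $\Gamma_m>0$ of (\ref{keycon1}) forces the lower-bound exponents to diverge, contradicting the finiteness of $F,G$ on any existence interval and yielding finite-time blow-up (equivalently, the iteration can be packaged into a Kato-type lemma for the pair $(F,G)$). Condition (\ref{keycon2}), which collapses to $p,q>1$ in the effective-damping regime, is the mild compatibility requirement ensuring that each generated forcing term dominates the free Euler modes with a positive coefficient, so that the chain of lower bounds is admissible and genuinely self-reinforcing.

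It remains to treat the borderline $\Gamma_m=0$ in (\ref{keycon1}), where the polynomial iteration stalls because $a_0=a^{*}$ (or $b_0=b^{*}$). Here I would refine the scheme by a logarithmic slicing argument, tracking the powers of $\log t$ that accumulate at each step and showing that they diverge. Together with the cone-adapted cutoff construction of the test functions, this critical-case bookkeeping is the second delicate point of the proof; the remaining estimates (the H\"older step, the two-fold integration of the Euler operators, and the verification that the accumulated constants stay positive) are routine once the exponent arithmetic above is in place.
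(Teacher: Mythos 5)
Your proposal is correct in its main line but follows a genuinely different route from the paper. The paper never reduces to ODEs: it works with the rescaled space--time test functions $\varPhi_i=h_i(t)\lambda\bigl(\frac{t-1}{d}\bigr)\phi\bigl(\frac{x}{R^{m+1}}\bigr)$, estimates the error terms $K_1,K_2,K_3$ and $L_1,L_2,L_3$ by H\"older's inequality, couples the two resulting inequalities algebraically to get $I_{R,R}\lesssim(R+1)^{-2p\frac{q+1}{pq-1}+(m+1)n+\beta_1+1}$ and its twin, and concludes $v\equiv0$ (resp.\ $u\equiv0$), contradicting (\ref{integral>0}); the critical case is handled by noting that boundedness of $I_{R,R}$ gives $h_1\vert v\vert^p\in L^1$ and invoking dominated convergence. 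You instead integrate (\ref{eqs}) in space, observe that $\int t^{2m}\Delta u\,dx=0$ by the support property (\ref{compact2}) --- so the Gellerstedt degeneracy disappears from the averaged equations and re-enters only through the cone volume $t^{(m+1)n}$ in the H\"older step --- and run a Kato-type iteration on the coupled Euler inequalities for $F=\int u\,dx$, $G=\int v\,dx$. Your exponent arithmetic is right: $\lambda_+^{(i)}=1-\beta_i$, the positivity of the Duhamel kernel, the two-step affine map of slope $pq$ and its fixed point $a^*=(m+1)n-\frac{2(p+1)}{pq-1}$ reproduce exactly the strict condition $\Gamma_m>0$. A by-product of your route is that (\ref{keycon2}) and the requirement $\mu_i>1$ for $m\in(-1,0)$ --- which the paper needs solely to make the $t^{2m}$-weighted annulus terms $K_3,L_3$ integrable on $[1,d+1]$ --- never enter, since your reduction has no annulus term at all; unused hypotheses are harmless, so this would in fact prove a slightly stronger statement.

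Two points need repair. First, the role you assign to (\ref{keycon2}) ("ensuring each generated forcing term dominates the free Euler modes") is not what your argument uses: the admissibility condition at each step is $a_{k+1}>\lambda_+^{(1)}=a_0$ (and its analogue for $b$), and this follows automatically from supercriticality through the monotone affine maps (equivalently, run the iteration with $a_{k+1}=\max\{a_0,\,p b_k-(m+1)n(p-1)+2\}$ and check that finite limits force $a_0\le a^*$ and $b_0\le b^*$). That admissibility inequality involves $(m+1)n$, not $2m+1+\beta_i$, so the sentence about (\ref{keycon2}) is spurious and should be deleted or corrected. Second, and more substantively, the critical case $\Gamma_m=0$ of (\ref{keycon1}) is only named, not proved, and it is where the real remaining work lies. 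The mechanism you need is resonance: if, say, $a_0=a^*=\lambda_+^{(1)}$, one step produces the forcing exponent $\lambda_+^{(1)}-2$, and since $\lambda_+^{(1)}+\lambda_-^{(1)}+\mu_1=1$ the Duhamel integral $\int_T^t\bigl(t^{\lambda_+}s^{\lambda_-}-t^{\lambda_-}s^{\lambda_+}\bigr)s^{\mu_1+\lambda_+-2}\,ds$ yields $t^{\lambda_+}\log(t/T)$; the log powers then multiply by $pq$ per cycle while the constants pick up factors $1/(j+1)$ from each resonant integration and $p$-th (resp.\ $q$-th) powers from H\"older, and one must verify that this bookkeeping still forces divergence at a finite time. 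This is considerably longer than the paper's dominated-convergence step, and your proposal does not contain it. The same remark applies, more mildly, to the strict case: "packaged into a Kato-type lemma" must include the quantitative constant tracking (schematically $\log C_{k+1}\ge p\log C_k-c\,k$) that converts geometrically growing exponents into an actual finite lifespan bound, though that part is standard.
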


\begin{remark}
$\mu_i>1, i=1,2$ when $m\in(-1,0)$ are technical conditions introduced  to ensure that $2m+1+\beta_i>0,i=1,2$ in (\ref{keycon2}).
We illustrate the non-emptiness of the conditions imposed in Theorem  \ref{theorem1}  with relevant examples. In other words, we claim that the solution to (\ref{eqs})  blows up in the case of Examples \ref{ex1}-\ref{ex2}.
\end{remark}
\begin{example}\label{ex1}
In the case of $m>0$, for example, $m=0.1, n=2,  \mu_1=3, \nu_1=0.25,\mu_2=0.5,\nu_2=0.125$,   a straightforward  calculation reveals $\delta_1=\frac{15}{4}$, $\delta_2=\frac{3}{16}$, $\beta_1=2-\frac{\sqrt{15}}{4}$, $\beta_2=\frac{3}{4}-\frac{\sqrt{3}}{8}$.  Choosing  $p=2.1>\frac{1+\beta_1}{2m+1+\beta_2}=\frac{3-\frac{\sqrt{15}}{4}}{1.95-\frac{\sqrt{3}}{8}}\approx1.172,q=2.2>
\frac{1+\beta_2}{2m+1+\beta_1}=\frac{1.75-\frac{\sqrt{3}}{8}}{3.2-\frac{\sqrt{15}}{4}}\approx0.687$, we obtain   $\Gamma_m(n,p,q,\beta_1,\beta_2)\approx0.017>0$.
\end{example}

\begin{example}\label{ex2}
In the case of $m\in(-1,0)$, such as $m=-0.6, n=3,\mu_1=2,\nu_1=0.125,\mu_2=1.8,\nu_2=0.25$,  then $\delta_1=\frac{15}{16},\delta_2=0.39,\beta_1=\frac{3}{2}-\frac{\sqrt{15}}{8},\beta_2=1.4-\frac{\sqrt{0.39}}{2}$. Choosing $p=2.3>\frac{1+\beta_1}{2m+1+\beta_2}\approx2.271$ and $q=2.7>\frac{1+\beta_2}{2m+1+\beta_1}\approx2.559$, we have $\Gamma_m(n,p,q,\beta_1,\beta_2)\approx0.066>0$ .
\end{example}


After establishing the blow-up result, we anticipate that
\begin{equation}\label{criticalcurve}
\Gamma_m(n,p,q,\beta_1,\beta_2)=0
\end{equation}
is the critical curve for (\ref{eqs}). Let
\begin{equation}\label{tildepq}
\begin{aligned}
&\tilde{p}:=\tilde{p}(m,n,\beta_1,\beta_2)=\frac{(m+1)n+\beta_1+1}{(m+1)n+\beta_2-1}
=1+\frac{2+\beta_1-\beta_2}{(m+1)n+\beta_2-1},\\
&\tilde{q}:=\tilde{q}(m,n,\beta_1,\beta_2)=\frac{(m+1)n+\beta_2+1}{(m+1)n+\beta_1-1}
=1+\frac{2+\beta_2-\beta_1}{(m+1)n+\beta_1-1}.
\end{aligned}
\end{equation}
By a straightforward algebraic computation, we find that
\begin{equation}\label{imply}
p\leq\tilde{p}\ \text{and} \ q\leq\tilde{q}\implies \Gamma_m(n,p,q,\beta_1,\beta_2)\geq0,
\end{equation}
Hence the necessary conditions for $ \Gamma_m(n,p,q,\beta_1,\beta_2)<0$
are as follows
\begin{align}
&p>\tilde{p}\ \ \text{and} \ \ q>\tilde{q};\label{glocase1}\\
\text{or,}\ \ &p\leq\tilde{p}\ \ \text{and} \ \ q>\tilde{q};\label{glocase2}\\
\text{or,}\ \ &p>\tilde{p}\ \ \text{and} \ \ q\leq\tilde{q}\label{glocase3}.
\end{align}
In other words, only the above three cases (\ref{glocase1})-(\ref{glocase3}) can result in $\Gamma_m(n,p,q,\beta_1,\beta_2)<0$.
Next, we categorize and present the global existence  results  according to  the different ranges of $\sigma$.

For $(u_0,u_1,v_0,v_1)\in D^\sigma\times D^\sigma$ with  $\sigma\geq1$, we have the three global existence results.

\begin{theorem}{\bf{\text{(Global existence for $p>\tilde{p}$ and $q>\tilde{q}$ (I) ).}}}\label{theorem11}
Let  $n\geq1, \sigma\geq1, m>-1$, $\mu_1,$ $\mu_2>1, \nu_1^2,\nu_2^2\geq0$ such that both $\delta_1$ and $\delta_2\geq (m+1)^2(n+2\sigma-1)^2$. Let $p,q>1$ with $p,q>\lceil \sigma\rceil$, and assume that if $n>2\sigma$, it holds that $p,q\leq 1+\frac{2}{n-2\sigma}$. For
$p>\tilde{p}$ and $q>\tilde{q}$, there exists a small constant  $\varepsilon>0$ such that for any initial data $(u_0,u_1,v_0,v_1)\in D^\sigma\times D^\sigma$ satisfying  the support condition (\ref{compact1}),  if $\Vert(u_0,u_1)\Vert_{D^\sigma}+\Vert(v_0,v_1)\Vert_{D^\sigma}\leq\varepsilon$, then  (\ref{eqs}) has a  unique global solution
\begin{equation*}
(u,v)\in \Big(C\bigl([1,\infty); H^\sigma(\mathbb{R}^n)\bigr) \cap C^1\bigl([1,\infty); H^{\sigma-1}(\mathbb{R}^n)\bigr)\Big)^2,
\end{equation*}
and  $(u,v)$ satisfies the support property
\begin{equation}\label{supportedcondition}
\text{supp} (u,v)(t,\cdot)\subset B_{\phi_m(t)-\phi_m(1)+R}, \ \  \text{for any}\ t\in(1,\infty),
\end{equation}
where $\phi_m(t)$ is defined by (\ref{compact2}) and  $\tilde{p}, \tilde{q}$ are defined by (\ref{tildepq}). Furthermore, the solution ($u,v$) satisfies the decay estimates
\begin{align}
&\Vert u(t,\cdot)\Vert_{L^2}\lesssim t^{-(m+1)\frac{n}{2}+\frac{\sqrt{\delta_1}-\mu_1+1}{2}}\big(\Vert(u_0,u_1)\Vert_{D^\sigma}
+\Vert(v_0,v_1)\Vert_{D^\sigma}\bigr),\label{decay1}\\
&\Vert u(t,\cdot)\Vert_{\dot{H}^\sigma}\lesssim t^{-(m+1)(\sigma+\frac{n}{2})+\frac{\sqrt{\delta_1}-\mu_1+1}{2}}\ell_1(t)\big(\Vert(u_0,u_1)\Vert_{D^\sigma}
+\Vert(v_0,v_1)\Vert_{D^\sigma}\bigr),\label{decay2}\\
&\Vert \partial_tu(t,\cdot)\Vert_{L^2}\lesssim t^{m-(m+1)(1+\frac{n}{2})+\frac{\sqrt{\delta_1}-\mu_1+1}{2}}\big(\Vert(u_0,u_1)\Vert_{D^\sigma}
+\Vert(v_0,v_1)\Vert_{D^\sigma}\bigr),\label{decay3}\\
&\Vert \partial_tu(t,\cdot)\Vert_{\dot{H}^{\sigma-1}}\lesssim t^{m-(m+1)(\sigma+\frac{n}{2})+\frac{\sqrt{\delta_1}-\mu_1+1}{2}}\ell_1(t)\big(\Vert(u_0,u_1)\Vert_{D^\sigma}
+\Vert(v_0,v_1)\Vert_{D^\sigma}\bigr),\label{decay4}\\
&\Vert v(t,\cdot)\Vert_{L^2}\lesssim t^{-(m+1)\frac{n}{2}+\frac{\sqrt{\delta_2}-\mu_2+1}{2}}\big(\Vert(u_0,u_1)\Vert_{D^\sigma}
+\Vert(v_0,v_1)\Vert_{D^\sigma}\bigr),\label{decay5}\\
&\Vert v(t,\cdot)\Vert_{\dot{H}^\sigma}\lesssim t^{-(m+1)(\sigma+\frac{n}{2})+\frac{\sqrt{\delta_2}-\mu_2+1}{2}}\ell_2(t)\big(\Vert(u_0,u_1)\Vert_{D^\sigma}
+\Vert(v_0,v_1)\Vert_{D^\sigma}\bigr),\label{decay6}\\
&\Vert \partial_tv(t,\cdot)\Vert_{L^2}\lesssim t^{m-(m+1)(1+\frac{n}{2})+\frac{\sqrt{\delta_2}-\mu_2+1}{2}}\big(\Vert(u_0,u_1)\Vert_{D^\sigma}
+\Vert(v_0,v_1)\Vert_{D^\sigma}\bigr),\label{decay7}\\
&\Vert \partial_tv(t,\cdot)\Vert_{\dot{H}^{\sigma-1}}\lesssim t^{m-(m+1)(\sigma+\frac{n}{2})+\frac{\sqrt{\delta_2}-\mu_2+1}{2}}\ell_2(t)\big(\Vert(u_0,u_1)\Vert_{D^\sigma}
+\Vert(v_0,v_1)\Vert_{D^\sigma}\bigr),\label{decay8}
\end{align}
where
\begin{equation}\label{l(t)}
\ell_i(t)=\left\{
\begin{aligned}
&1,\ &\text{if}\ \delta_i>\bigl((m+1)(n+2\sigma-1)\bigr)^2,\\
&\bigl(1+\log{t}\bigr)^{\frac{1}{2}},\ &\text{if}\ \delta_i=\bigl((m+1)(n+2\sigma-1)\bigr)^2,
\end{aligned}
\right.
\ \ i=1,2.
\end{equation}
\end{theorem}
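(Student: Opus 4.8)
The plan is to realize the solution of (\ref{eqs}) as a fixed point of the Duhamel map in a Banach space whose norm carries exactly the decay weights prescribed by (\ref{decay1})--(\ref{decay8}), and to close the estimates with the linear $(L^1\cap L^2)-L^2$ bounds from \cite{LiGuo2025} (collected in Section~\ref{section4high}). Writing $E_i(t,s)$, $i=1,2$, for the Duhamel propagator associated with the linear equation (\ref{single21}) with coefficients $(\mu_i,\nu_i)$, and $u^{\mathrm{lin}},v^{\mathrm{lin}}$ for the solutions of the two homogeneous equations carrying the initial data of (\ref{eqs}), a solution is a fixed point of $\mathcal N(u,v)=(\mathcal N_1(v),\mathcal N_2(u))$, where
\begin{align*}
\mathcal N_1(v)(t)&=u^{\mathrm{lin}}(t)+\int_1^t E_1(t,s)\big[\,|v(s,\cdot)|^p\,\big]\,ds,\\
\mathcal N_2(u)(t)&=v^{\mathrm{lin}}(t)+\int_1^t E_2(t,s)\big[\,|u(s,\cdot)|^q\,\big]\,ds.
\end{align*}
I would let $X$ be the space of pairs in $\big(C([1,\infty);H^\sigma)\cap C^1([1,\infty);H^{\sigma-1})\big)^2$ obeying (\ref{supportedcondition}), normed by $\|(u,v)\|_X:=\|u\|_{X_1}+\|v\|_{X_2}$, where $\|u\|_{X_1}$ is the supremum over $t\ge1$ of the four quantities $\|u(t)\|_{L^2}$, $\|u(t)\|_{\dot H^\sigma}$, $\|\partial_tu(t)\|_{L^2}$, $\|\partial_tu(t)\|_{\dot H^{\sigma-1}}$ divided by their respective time weights on the right-hand sides of (\ref{decay1})--(\ref{decay4}), and $\|v\|_{X_2}$ is defined analogously from (\ref{decay5})--(\ref{decay8}). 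The goal is to show that $\mathcal N$ maps the closed ball $\{(u,v)\in X:\|(u,v)\|_X\le C\varepsilon\}$ into itself and is a contraction there.

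The self-mapping splits into a linear and a nonlinear contribution. The linear parts $u^{\mathrm{lin}},v^{\mathrm{lin}}$ are bounded by $C\|(u_0,u_1,v_0,v_1)\|_{D^\sigma\times D^\sigma}\le C\varepsilon$ directly from the estimates of \cite{LiGuo2025}, and by construction they reproduce the weights defining $X$. For the nonlinear part of $\mathcal N_1$ I would estimate $\int_1^t E_1(t,s)[|v(s)|^p]\,ds$ in each of the four norms via the inhomogeneous version of the $(L^1\cap L^2)-L^2$ estimate, which controls the propagator acting on a source $f(s)$ by $\|f(s)\|_{L^1\cap L^2}$ times an explicit factor in $t$ and $s$ (built from $\phi_m$). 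Since $\||v|^p\|_{L^1}=\|v\|_{L^p}^p$ and $\||v|^p\|_{L^2}=\|v\|_{L^{2p}}^p$, the Gagliardo--Nirenberg inequality
\begin{equation*}
\|v(s)\|_{L^r}\lesssim\|v(s)\|_{L^2}^{1-\theta(r)}\|v(s)\|_{\dot H^\sigma}^{\theta(r)},\qquad \theta(r)=\tfrac n\sigma\big(\tfrac12-\tfrac1r\big),
\end{equation*}
together with the $X_2$-weights, converts the source norms into a power $s^{-b}$ (times possibly a power of $\ell_2(s)$). The remaining task is the scalar time integral $\int_1^t(\text{propagator weight})\,s^{-b}\,ds$, which must be dominated by the target $u$-weight from (\ref{decay1})--(\ref{decay4}).

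It is exactly here that the hypothesis $p>\tilde p$ and $q>\tilde q$ (equivalently $\Gamma_m(n,p,q,\beta_1,\beta_2)<0$, cf. (\ref{imply})) enters: it forces the exponent $b$ to be large enough that the integral converges and regenerates the decay rate of the linear flow, so the nonlinear iterate stays in the same weighted class. Carrying this out for both components yields the schematic bounds $\|\mathcal N_1(v)\|_{X_1}\lesssim\varepsilon+\|v\|_{X_2}^{p}$ and $\|\mathcal N_2(u)\|_{X_2}\lesssim\varepsilon+\|u\|_{X_1}^{q}$; since $p,q>1$, both close on the ball for $\varepsilon$ small. The contraction estimate follows in the same manner from the Lipschitz bound $\big||a|^p-|b|^p\big|\lesssim(|a|^{p-1}+|b|^{p-1})|a-b|$ and H\"older's inequality (the contraction being verified in the $L^2$-level part of the norm, with the higher-order bounds (\ref{decay2}),(\ref{decay4}),(\ref{decay6}),(\ref{decay8}) propagated separately). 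The support property (\ref{supportedcondition}) is inherited from the finite propagation speed of the Gellerstedt operator already used for (\ref{compact2}), and Proposition~\ref{localexistence} identifies the fixed point with the energy solution and secures uniqueness.

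The main obstacle is the \emph{coupled} family of time-integral estimates. Unlike the single equation (\ref{single2}), the decay of $u$ is driven by $|v|^p$ and that of $v$ by $|u|^q$, so the two chains of weighted bounds are intertwined and must close simultaneously; this is precisely what forces the threshold to involve $p$ and $q$ jointly through the factor $pq-1$ in $\Gamma_m$. The delicate technical points are (i) verifying that the convolution of the inhomogeneous linear decay with the nonlinear source decay reproduces the linear decay rate rather than a slower one, (ii) tracking the borderline logarithmic factors $\ell_i(s)$ from (\ref{l(t)}) through the integration, where they only cost an extra power of $\log t$ and do not alter the rate, and (iii) checking that the exponents $\theta(r)$ remain admissible, which is guaranteed by the upper restriction $p,q\le 1+\frac{2}{n-2\sigma}$ when $n>2\sigma$.
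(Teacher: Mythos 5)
Your overall architecture coincides with the paper's: a weighted space whose norm is built from the decay rates (\ref{decay1})--(\ref{decay8}) (the paper's $M_1,M_2$ and $\mathcal{X}(T)$ with $\alpha_1=\alpha_2=0$), the Duhamel operator $\mathcal{N}$, the linear bounds of \cite{LiGuo2025}, and closure of the time integrals precisely from $p>\tilde p$, $q>\tilde q$ via the exponent conditions (\ref{integralexponent1}), (\ref{integralexponent2}), including the treatment of the borderline logarithms $\ell_i$. However, there is a genuine gap in your nonlinear estimate. You assert that the inhomogeneous $(L^1\cap L^2)-L^2$ estimate controls $\int_1^t E_1(t,s)[|v(s)|^p]\,ds$ \emph{in each of the four norms} by $\||v(s)|^p\|_{L^1\cap L^2}$ alone. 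This is only true for the $L^2$-level components. By Proposition \ref{unlinearestimate}, the $\dot H^{\kappa}$ bound of the Duhamel term with $\kappa=\sigma\geq 1$ requires the source in $L^1\cap\dot H^{\sigma-1}$ (see the factor $\tau^{(m+1)(\frac n2+[\kappa-1]_+)}\Vert g\Vert_{\dot H^{[\kappa-1]_+}}$ in (\ref{u0=0u})), and the $\partial_t$ estimate (\ref{u0=0ut}) additionally requires $\dot H^{[\sigma-2]_+}$ control when $\sigma>2$. So to close the components (\ref{decay2}), (\ref{decay4}), (\ref{decay6}), (\ref{decay8}) of your norm you must estimate $\||v(s)|^p\|_{\dot H^{\sigma-1}}$ (and $\||v(s)|^p\|_{\dot H^{\sigma-2}}$), which is done in the paper by the fractional chain rule of Lemma \ref{coro4.3} together with Lemma \ref{foures}, and this is exactly where the standing hypothesis $p,q>\lceil\sigma\rceil$ is consumed (smoothness of $z\mapsto|z|^p$ needed for the fractional Leibniz/chain rule). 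Your argument never invokes this hypothesis, and Gagliardo--Nirenberg for Lebesgue norms alone cannot produce these fractional-Sobolev bounds; as written, the top-order part of the self-mapping estimate fails.

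A secondary, non-fatal difference: you propose proving contraction only at the $L^2$ level (using $\bigl||a|^p-|b|^p\bigr|\lesssim(|a|^{p-1}+|b|^{p-1})|a-b|$) and propagating the higher-order bounds separately, whereas the paper proves contraction in the full norm via the fractional difference estimates of Lemma \ref{key2lemmaa} and Lemma \ref{8cases}. Your device is legitimate and avoids those more technical difference estimates, but it requires two additional verifications that you should not omit: that the ball of the strong norm is complete under the weaker metric (lower semicontinuity of the weighted $\dot H^\sigma$-type norms under $L^2$ convergence), and that the fixed point, a priori only $L^\infty_tH^\sigma$, actually lies in $C([1,\infty);H^\sigma)\cap C^1([1,\infty);H^{\sigma-1})$, which one recovers from the Duhamel representation rather than from the limit procedure itself.
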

\begin{remark}
The assumptions $\mu_1,\mu_2>1$ are technical conditions imposed to guarantee that the denominators $(m+1)n+\beta_1-1$ and $(m+1)n+\beta_2-1$ in the definitions of $\tilde{p}$ and $\tilde{q}$ remain positive, thereby ensuring the validity of the integral exponents (\ref{integralexponent1}) and (\ref{integralexponent2}). It should be emphasized that since our analysis focuses on the case where $\delta_1,\delta_2$  are relatively large, imposing $\mu_1,\mu_2>1$ is naturally  reasonable.
\end{remark}
We need to provide an example to demonstrate the validity of the conditions in Theorem \ref{theorem11}.
\begin{example}\label{re2.2}
For $\sigma=1.1, m=0.4,n=3>2\sigma,\mu_1=15,\nu_1=6,\mu_2=8.7,\nu_2=0.25$, we calculate that $\delta_1=52,\delta_2=59.04>34.5744=(m+1)^2(n+2\sigma-1)^2,\beta_1=\frac{16-\sqrt{52}}{2},\beta_2=
\frac{9.7-\sqrt{59.04}}{2}, \tilde{p}=\frac{5.2+\frac{16-\sqrt{52}}{2}}{3.2+\frac{9.7-\sqrt{59.04}}{2}}\approx2.28,
\tilde{q}=\frac{5.2+\frac{9.7-\sqrt{59.04}}{2}}{3.2+\frac{16-\sqrt{52}}{2}}\approx0.82,
1+\frac{2}{n-2\sigma}=3.5$. Then for any $2=\lceil1.1\rceil<\frac{5.2+\frac{16-\sqrt{52}}{2}}{3.2+\frac{9.7-\sqrt{59.04}}{2}}<p\leq 3.5, 2=\lceil1.1\rceil< q\leq3.5$, (\ref{eqs}) has a unique global solution.
\end{example}
\begin{remark}\label{decayexplation}
The decay indices of the solution to (\ref{eqs}) shown in Theorem \ref{theorem11} are actually the same as that of the solution to the corresponding linear equation (\ref{lineareq}), which can be referred to in propositions \ref{linearestimate}-\ref{unlinearestimate}.
\end{remark}

\begin{theorem}{\bf{\text{(Global existence for $p\leq\tilde{p}$ and $q>\tilde{q}$  (I)) .}}}\label{theorem12}
Let  $n\geq1, \sigma\geq1, m>-1,$ $\mu_1,$ $\mu_2>1, \nu_1^2,\nu_2^2\geq0$ such that $\delta_1\geq (m+1)^2(n+2\sigma-1)^2, \delta_2>(m+1)^2(n+2\sigma-1)^2$. Let $p,q>1$ with $p,q>\lceil \sigma\rceil$,  and assume that  if $n>2\sigma$, $p,q\leq 1+\frac{2}{n-2\sigma}$.  For
$p\leq\tilde{p}$ and $q>\tilde{q}$ satisfying
\begin{equation}\label{th4con}
\frac{q+1}{pq-1}<\frac{(m+1)n+\beta_2-1}{2},
\end{equation}
there exists a small constant $\varepsilon>0$ such that for any  initial data $(u_0,u_1,v_0,v_1)\in D^\sigma\times D^\sigma$ satisfying  the support condition (\ref{compact1}), and if $\Vert(u_0,u_1)\Vert_{D^\sigma}+\Vert(v_0,v_1)\Vert_{D^\sigma}\leq\varepsilon$,
then (\ref{eqs}) admits  a unique global solution $$(u,v)\in \Big(C\bigl([1,\infty); H^\sigma(\mathbb{R}^n)\bigr)\cap C^1\bigl([1,\infty);  H^{\sigma-1}(\mathbb{R}^n)\bigr)\Big)^2,$$ and ($u,v$) satisfies the support property (\ref{supportedcondition}), where $\tilde{p}, \tilde{q}$ are defined by (\ref{tildepq}). Moreover, the solution $v$ satisfies the decay estimates (\ref{decay5})- (\ref{decay8}), and $u$ satisfies the estimates
\begin{align}
&\Vert u(t,\cdot)\Vert_{L^2}\lesssim t^{-(m+1)\frac{n}{2}+\frac{\sqrt{\delta_1}-\mu_1+1}{2}+\alpha_1}\big(\Vert(u_0,u_1)\Vert_{D^\sigma}
+\Vert(v_0,v_1)\Vert_{D^\sigma}\bigr),\label{decay9}\\
&\Vert u(t,\cdot)\Vert_{\dot{H}^\sigma}\lesssim t^{-(m+1)(\sigma+\frac{n}{2})+\frac{\sqrt{\delta_1}-\mu_1+1}{2}+\alpha_1}\ell_1(t)\big(\Vert(u_0,u_1)\Vert_{D^\sigma}
+\Vert(v_0,v_1)\Vert_{D^\sigma}\bigr),\label{decay10}\\
&\Vert \partial_tu(t,\cdot)\Vert_{L^2}\lesssim t^{m-(m+1)(1+\frac{n}{2})+\frac{\sqrt{\delta_1}-\mu_1+1}{2}+\alpha_1}\big(\Vert(u_0,u_1)\Vert_{D^\sigma}
+\Vert(v_0,v_1)\Vert_{D^\sigma}\bigr),\notag\\
&\Vert \partial_tu(t,\cdot)\Vert_{\dot{H}^{\sigma-1}}\lesssim t^{m-(m+1)(\sigma+\frac{n}{2})+\frac{\sqrt{\delta_1}-\mu_1+1}{2}+\alpha_1}\ell_1(t)\big(\Vert(u_0,u_1)\Vert_{D^\sigma}
+\Vert(v_0,v_1)\Vert_{D^\sigma}\bigr),\notag
\end{align}
where $\ell_1(t)$ is defined by (\ref{l(t)}) and
 \begin{equation}\label{gamma1}
\alpha_1=\left\{
\begin{aligned}
&\big((m+1)n+\beta_2-1\big)(\tilde{p}-p), &\quad& \text{if}\ \ p<\tilde{p}, \\
&\epsilon, &\quad&\text{if}\ \ p=\tilde{p}
\end{aligned}
\right.
\end{equation}
with $\epsilon> 0$ being an sufficiently  small constant.
\end{theorem}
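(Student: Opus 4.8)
The plan is to run a contraction-mapping argument in a Banach space whose weighted norm encodes exactly the decay estimates claimed in the statement, using Duhamel's principle together with the linear $(L^1\cap L^2)-L^2$ estimates (Propositions \ref{linearestimate}--\ref{unlinearestimate}) as the linear building block.

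First I would fix the solution space. Define $X=\{(u,v):\|(u,v)\|_X<\infty\}$, where $\|(u,v)\|_X$ is a finite sum of quantities of the form $\sup_{t\ge1}t^{-d}\|\cdot\|$ : for the $u$-component the weights $d$ (including the $\ell_1(t)$ factors) are read off from the slower, $\alpha_1$-shifted rates (\ref{decay9})--(\ref{decay10}) and the two unlabeled $\partial_t u$ estimates, while for the $v$-component they are the free rates (\ref{decay5})--(\ref{decay8}). On $X$ define the map $N(u,v)=(N_1,N_2)$, where $N_1$ is the sum of the linear evolution of $(u_0,u_1)$ and the Duhamel integral $\int_1^t E_1(t,s)|v(s)|^p\,ds$, and symmetrically $N_2$ uses $(v_0,v_1)$ and $\int_1^t E_2(t,s)|u(s)|^q\,ds$, with $E_i$ the linear solution operator for the $i$-th equation. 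A fixed point of $N$ in a small ball of $X$ is the desired global solution; its uniqueness is inherited from the local statement (Proposition \ref{localexistence}), and the support property (\ref{supportedcondition}) follows from finite propagation speed as in Definition \ref{defenergysolution}.

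The core is the self-mapping estimate $\|N(u,v)\|_X\lesssim\varepsilon+\|(u,v)\|_X^{\,p}+\|(u,v)\|_X^{\,q}$. For each norm piece I would apply the linear estimate to the Duhamel integrand and bound $\||v(s)|^p\|_{L^1\cap L^2}=\|v(s)\|_{L^p}^p+\|v(s)\|_{L^{2p}}^p$ (resp.\ $\||u(s)|^q\|$) by Gagliardo--Nirenberg interpolation between $\|v(s)\|_{L^2}$ and $\|v(s)\|_{\dot H^\sigma}$ (resp.\ for $u$), which converts the $X$-norm into a concrete power $s^{e}$. The resulting time integrals $\int_1^t(\text{propagator rate})\cdot s^{e}\,ds$ are then evaluated by splitting $[1,t]$ into $[1,t/2]$ and $[t/2,t]$ and tracking the dominant contribution; matching the outcome to the target weight is what pins down the exponents. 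The contraction estimate is obtained the same way, using $\bigl||a|^p-|b|^p\bigr|\lesssim(|a|^{p-1}+|b|^{p-1})|a-b|$ and Hölder, so that $N$ is a contraction on a small ball once $\varepsilon$ is small; Banach's theorem then yields the unique global solution.

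The main obstacle is the $N_2$ (i.e.\ $v$-component) estimate. Because $p\le\tilde{p}$, the first component $u$ can only be propagated at the slower, $\alpha_1$-degraded rate, so in the $v$-equation the source $|u|^q$ decays more slowly than in the fully supercritical case of Theorem \ref{theorem11}. I expect the delicate point to be showing that, despite this loss, $v$ still attains the \emph{free} rates (\ref{decay5})--(\ref{decay8}): this forces the Duhamel time integral for $N_2$ to close with no logarithmic or polynomial loss, and it is precisely the hypothesis (\ref{th4con}), namely $\frac{q+1}{pq-1}<\frac{(m+1)n+\beta_2-1}{2}$, that guarantees the exponent $e$ above is strictly large enough for the $[t/2,t]$-contribution to be controlled by the target weight $t^{d_v}$. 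Dually, the choice $\alpha_1=((m+1)n+\beta_2-1)(\tilde{p}-p)$ is exactly the shift making the $N_1$ integral close on the degraded $u$-rates. Verifying these two exponent balances, and checking that the interpolation is admissible under $p,q\le1+\frac{2}{n-2\sigma}$ and $p,q>\lceil\sigma\rceil$, is the technical heart of the argument.
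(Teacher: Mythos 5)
Your proposal follows essentially the same route as the paper's own proof: the same weighted solution space (the paper's $\mathcal{X}(T)$, with the $t^{-\alpha_1}$ factor on the $u$-component and the free rates on the $v$-component), the same Duhamel operator and contraction scheme built on Propositions \ref{linearestimate}--\ref{unlinearestimate}, and the same two exponent balances that form the heart of the argument --- $\alpha_1$ being exactly the growth produced by the non-convergent $H_1(v)$ integral, and hypothesis (\ref{th4con}) being exactly what makes the $H_2(u)$ integral convergent despite the $t^{\alpha_1}$-degraded rate of $u$, cf.\ (\ref{ingreinfty}). The only execution-level differences are minor: the paper needs no $[1,t/2]\cup[t/2,t]$ splitting because the linear estimates already factor as $t^{A}\tau^{B}$; the $L^1$-norm of the nonlinearity is controlled through the compact support of the solution rather than by interpolation alone; and for $\sigma>1$ the contraction estimate in the $\dot H^{\sigma-1}$-type norms requires the fractional chain-rule machinery of Lemma \ref{key2lemmaa} rather than just the pointwise inequality $\bigl\vert\,\vert a\vert^p-\vert b\vert^p\bigr\vert\lesssim(\vert a\vert^{p-1}+\vert b\vert^{p-1})\vert a-b\vert$.
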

\begin{remark}
In the proof of Theorem \ref{theorem12} shown in Section \ref{proofthm12}, we observe  that when $p\leq\tilde{p}$, the integral corresponding to index (\ref{nonintegralindex}), i.e., $\int_1^t\tau^{(m+1)n+\beta_1+(-(m+1)n-\beta_2+1)p}d\tau$,  cannot be controlled by a convergent improper integral.  Therefore, it is necessary to rely on the technical condition (\ref{th4con}) to ensure the integrability of another improper integral (\ref{ingreinfty}). However, the cost of this approach is the emergence of a non-decaying factor $t^{\alpha_1}$ in the estimate of $u(t,\cdot)$.
\end{remark}
It is still necessary to provide an example to illustrate the reasonableness of  conditions stated in Theorem \ref{theorem12}.
\begin{example}\label{re2.3}
For $\sigma=1.4$, $m=0.3, n=4>2\sigma, \mu_1=20, \mu_2=9, \nu_1=8.5, \nu_2=1$, we calculate that $\delta_1=72, \delta_2=60>(m+1)^2(n+2\sigma-1)^2=56.8516, \beta_1=\frac{21-\sqrt{72}}{2}, \beta_2=\frac{10-\sqrt{60}}{2},\tilde{p}=\frac{6.2+\frac{21-\sqrt{72}}{2}}
{4.2+\frac{10-\sqrt{60}}{2}}\approx 2.339,\tilde{q}=\frac{6.2+\frac{10-\sqrt{60}}{2}}{4.2+\frac{21-\sqrt{72}}{2}}\approx 0.7007$. Choosing  $p=2.1<\tilde{p},q=2.2>\tilde{q},$  we have $p,q>\lceil 1.4\rceil=2$, $p,q<1+\frac{2}{n-2\sigma}\approx2.667$ and  $\frac{q+1}{pq-1}=\frac{3.2}{3.62}\approx 0.884<\frac{(m+1)n+\beta_2-1}{2}\approx2.6635$. Hence  the solution ($u,v$) to (\ref{eqs}) exists globally for $p=2.1,q=2.2$.
\end{example}

\begin{theorem}{\bf{\text{(Global  existence for $p>\tilde{p}$ and $q\leq\tilde{q}$ (I) ).}}}\label{theorem13}
Let  $n\geq1, \sigma\geq1, m>-1, \mu_1,$ $\mu_2>1, \nu_1,\nu_2\geq0$ such that $\delta_1> (m+1)^2(n+2\sigma-1)^2,\delta_2\geq(m+1)^2(n+2\sigma-1)^2$. Let $p,q>1$ with $p,q>\lceil \sigma\rceil$,  and assume that  if $n>2\sigma$, $p,q\leq 1+\frac{2}{n-2\sigma}$. For
$p>\tilde{p}$ and $q\leq\tilde{q}$ satisfying
\begin{equation}\label{th4con}
\frac{p+1}{pq-1}<\frac{(m+1)n+\beta_1-1}{2},
\end{equation}
there exists a small constant $\varepsilon>0$ such that for any initial data  $(u_0,u_1,v_0,v_1)\in D^\sigma\times D^\sigma$ satisfying  the support condition (\ref{compact1}), and  if $\Vert(u_0,u_1)\Vert_{D^\sigma}+\Vert(v_0,v_1)\Vert_{D^\sigma}\leq\varepsilon$,
then  (\ref{eqs})  admits a unique global solution $$(u,v)\in \Big(C\bigl([1,\infty); H^\sigma(\mathbb{R}^n)\bigr) \cap C^1\bigl([1,\infty); H^{\sigma-1}(\mathbb{R}^n)\bigr)\Big)^2, $$ and ($u,v$) satisfies the support property (\ref{supportedcondition}), where $\tilde{p}, \tilde{q}$ are defined by (\ref{tildepq}). Moreover, the solution $u$  satisfies the decay estimates (\ref{decay1})- (\ref{decay4}),  and $v$ satisfies the estimates
\begin{align}
&\Vert v(t,\cdot)\Vert_{L^2}\lesssim t^{-(m+1)\frac{n}{2}+\frac{\sqrt{\delta_2}-\mu_2+1}{2}+\alpha_2}\big(\Vert(u_0,u_1)\Vert_{D^\sigma}
+\Vert(v_0,v_1)\Vert_{D^\sigma}\bigr)\label{decay11}\\
&\Vert v(t,\cdot)\Vert_{\dot{H}^\sigma}\lesssim t^{-(m+1)(\sigma+\frac{n}{2})+\frac{\sqrt{\delta_2}-\mu_2+1}{2}+\alpha_2}\ell_2(t)\big(\Vert(u_0,u_1)\Vert_{D^\sigma}
+\Vert(v_0,v_1)\Vert_{D^\sigma}\bigr),\label{decay12}\\
&\Vert \partial_tv(t,\cdot)\Vert_{L^2}\lesssim t^{m-(m+1)(1+\frac{n}{2})+\frac{\sqrt{\delta_2}-\mu_2+1}{2}+\alpha_2}\big(\Vert(u_0,u_1)\Vert_{D^\sigma}
+\Vert(v_0,v_1)\Vert_{D^\sigma}\bigr),\notag\\
&\Vert \partial_tv(t,\cdot)\Vert_{\dot{H}^{\sigma-1}}\lesssim t^{m-(m+1)(\sigma+\frac{n}{2})+\frac{\sqrt{\delta_2}-\mu_2+1}{2}+\alpha_2}\ell_2(t)\big(\Vert(u_0,u_1)\Vert_{D^\sigma}
+\Vert(v_0,v_1)\Vert_{D^\sigma}\bigr),\notag
\end{align}
where $\ell_2(t)$ is defined by (\ref{l(t)}) and
\begin{equation}\label{gamma2}
\alpha_2=\left\{
\begin{aligned}
&\big((m+1)n+\beta_1-1\big)(\tilde{q}-q), &\quad& \text{if}\ \ q<\tilde{q}, \\
&\epsilon, &\quad&\text{if}\ \ q=\tilde{q}
\end{aligned}
\right.
\end{equation}
with $\epsilon> 0$ being an sufficiently  small constant.
\end{theorem}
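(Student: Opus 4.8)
The plan is to construct the global solution by a Banach fixed-point argument applied to the Duhamel (integral) formulation of (\ref{eqs}), fed by the linear $(L^1\cap L^2)-L^2$ decay estimates recorded in Propositions \ref{linearestimate}--\ref{unlinearestimate}. Writing $E_i(t,s)$ for the operator sending a source placed at time $s$ to its contribution at time $t$ in the $i$-th linear Gellerstedt equation, Duhamel's principle represents any solution as
\[
u(t)=u^{\mathrm{lin}}(t)+\int_1^t E_1(t,s)|v(s)|^p\,ds,\qquad v(t)=v^{\mathrm{lin}}(t)+\int_1^t E_2(t,s)|u(s)|^q\,ds,
\]
where $u^{\mathrm{lin}},v^{\mathrm{lin}}$ carry the initial data. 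I would define $\Phi(u,v)=(\tilde u,\tilde v)$ by these right-hand sides and look for a fixed point in a weighted space $X$ whose norm $\|(u,v)\|_X$ is the supremum over $t\geq1$ of each of the four norms of $u$ divided by the \emph{clean} profiles on the right of (\ref{decay1})--(\ref{decay4}), together with each of the four norms of $v$ divided by the \emph{loss-corrected} profiles on the right of (\ref{decay11})--(\ref{decay12}), in every case with the data-norm factor stripped off.

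The first block of work is the nonlinear bound on the sources in $L^1\cap L^2$. I would use $\||v|^p\|_{L^1}=\|v\|_{L^p}^p$, $\||v|^p\|_{L^2}=\|v\|_{L^{2p}}^p$ and interpolate $\|v\|_{L^p},\|v\|_{L^{2p}}$ between $\|v\|_{L^2}$ and $\|v\|_{\dot H^\sigma}$ by Gagliardo--Nirenberg, handling the $\dot H^\sigma$-norm of the nonlinearity by the fractional Gagliardo--Nirenberg/chain rule; this is where the hypotheses $p,q>\lceil\sigma\rceil$ and (for $n>2\sigma$) $p,q\leq1+\frac{2}{n-2\sigma}$ are consumed. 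The outcome converts each source norm into a power of $\|(u,v)\|_X$ times the appropriate power of the time-decay profile of the driving component.

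The decisive step is the time integration, and here the coupling and the loss interact as the mirror image of Theorem \ref{theorem12}. For the $v$-component the source is $|u|^q$, and the exponent in the relevant time integral is $(m+1)n+\beta_2+(1-(m+1)n-\beta_1)q$, which is $\geq-1$ precisely because $q\leq\tilde q$; hence $\int_1^t$ of that power \emph{diverges} and grows like $t^{\alpha_2}$ with $\alpha_2=((m+1)n+\beta_1-1)(\tilde q-q)$ as in (\ref{gamma2}), forcing the loss-corrected profiles (\ref{decay11})--(\ref{decay12}). For the $u$-component the source $|v|^p$ has $p>\tilde p$ and would converge on its own; but $v$ now carries the extra growth $t^{\alpha_2}$, so the $u$-integral inherits a factor $t^{\alpha_2 p}$ from $|v|^p$. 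The hypothesis (\ref{th4con}), namely $\frac{p+1}{pq-1}<\frac{(m+1)n+\beta_1-1}{2}$, is exactly what keeps this $u$-integral convergent as an improper integral on $[1,\infty)$ in spite of that growth, so $u$ still enjoys the clean profiles (\ref{decay1})--(\ref{decay4}). I expect this feedback bookkeeping — propagating the divergence-induced factor $t^{\alpha_2}$ from the $v$-equation into the $u$-equation and absorbing it through (\ref{th4con}) — to be the main obstacle, and it is precisely the analytic price of admitting $q\leq\tilde q$.

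Finally I would close the scheme. The estimates above show that $\Phi$ maps a small ball of $X$ into itself once $\varepsilon$ is small; the analogous computation for differences, using $\big||a|^p-|b|^p\big|\lesssim(|a|^{p-1}+|b|^{p-1})|a-b|$ and its $q$-analogue together with Hölder's inequality, shows $\Phi$ is a contraction there. Banach's fixed-point theorem then yields the unique global solution of the stated regularity, while the support property (\ref{supportedcondition}) is inherited from the local solution in Proposition \ref{localexistence} via uniqueness and finite propagation speed. Since the entire construction is the image of the proof of Theorem \ref{theorem12} under the exchange $u\leftrightarrow v$, $p\leftrightarrow q$, $\beta_1\leftrightarrow\beta_2$, $\delta_1\leftrightarrow\delta_2$ (and $\alpha_1\leftrightarrow\alpha_2$), I would organize the write-up so as to reuse those estimates verbatim.
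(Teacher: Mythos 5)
Your proposal is correct and follows essentially the same route as the paper: the paper proves Theorem \ref{theorem13} precisely by swapping $p\leftrightarrow q$ (and $u\leftrightarrow v$, $\beta_1\leftrightarrow\beta_2$, $\delta_1\leftrightarrow\delta_2$, $\alpha_1\leftrightarrow\alpha_2$) in the proof of Theorem \ref{theorem12}, which is the contraction-mapping argument in the weighted space $\mathcal{X}(T)$ that you reconstruct --- divergent time integral for the $v$-component forced by $q\leq\tilde q$ yielding the $t^{\alpha_2}$ loss, and condition (\ref{th4con}) absorbing the resulting $t^{\alpha_2 p}$ feedback so that the $u$-integral (\ref{ingreinfty})-type bound converges and $u$ keeps the clean decay (\ref{decay1})--(\ref{decay4}). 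Your explicit observation that the whole construction is the mirror image of Theorem \ref{theorem12} is exactly how the paper organizes it in Section \ref{proofthm13}.
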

\begin{remark}

By swapping the positions of $p$ and $q$  in Theorem \ref{theorem12}, we can directly derive Theorem \ref{theorem13}. Therefore, we can also refer to Theorem \ref{theorem13} as being dual to Theorem \ref{theorem12}. Furthermore, it can be  straightforward to provide examples to illustrate the reasonableness of the various conditions outlined in Theorem \ref{theorem13}, and we will not repeat them here.
\end{remark}

For  $(u_0,u_1,v_0,v_1)\in D^\sigma\times D^\sigma$ with  $0<\sigma<1$, we also establish  three global existence results.

\begin{theorem}{\bf{\text{(Global existence for $p>\tilde{p}$ and $q>\tilde{q}$ (II) ).}}}\label{theorem21} Let $n\geq1, \sigma\in(0,1)$, $m>-1$, $\mu_1, \mu_2>1, \nu_1^2, \nu_2^2\geq0$ such that both $\delta_1$ and $\delta_2\geq(m+1)^2(n+2\sigma-1)^2$. Let $p,q>1$, and when $n>2\sigma$, we assume $p,q\leq\frac{n}{n-2\sigma}$.  For $p>\tilde{p}$ and $q>\tilde{q}$,  there exists a small constant $\varepsilon>0$ such that for any  initial data $(u_0,u_1,v_0,v_1)\in D^\sigma\times D^\sigma$ satisfying  the support condition (\ref{compact1}), and if $\Vert(u_0,u_1)\Vert_{D^\sigma}+\Vert(v_0,v_1)\Vert_{D^\sigma}\leq\varepsilon$, then (\ref{eqs}) admits a unique global solution $(u,v)\in \Big(C\bigl([1,\infty); H^\sigma(\mathbb{R}^n)\bigr)\Big)^2$ and  ($u,v$) satisfies the support property (\ref{supportedcondition}). Furthermore, the solution ($u,v$) satisfies the decay estimates (\ref{decay1}), (\ref{decay2}), (\ref{decay5}) and (\ref{decay6}).
\end{theorem}

\begin{theorem}{\bf{\text{(Global existence for $p\leq\tilde{p}$ and $q>\tilde{q}$ (II) ).}}}\label{theorem22}
Let $n\geq1, \sigma\in(0,1)$, $m>-1$, $\mu_1,\mu_2>1$, $\nu_1^2,\nu_2^2\geq0$, $m>-1$ such that $\delta_1\geq(m+1)^2(n+2\sigma-1)^2$ and $\delta_2>(m+1)^2(n+2\sigma-1)^2$. Let $p,q>1$,  and when $n>2\sigma$, we assume $p,q\leq\frac{n}{n-2\sigma}$. For $p\leq\tilde{p}$ and $q>\tilde{q}$ satisfying
\begin{equation*}
\frac{q+1}{pq-1}<\frac{(m+1)n+\beta_2-1}{2},
\end{equation*}
there exists a small constant $\varepsilon>0$ such that for any initial data  $(u_0,u_1,v_0,v_1)\in D^\sigma\times D^\sigma$ satisfying  the support condition (\ref{compact1}),  and if $\Vert(u_0,u_1)\Vert_{D^\sigma}+\Vert(v_0,v_1)\Vert_{D^\sigma}\leq\varepsilon$, then
(\ref{eqs}) admits  a unique global solution $(u,v)\in \Big(C\bigl([1,\infty); H^\sigma(\mathbb{R}^n)\bigr)\Big)^2$  and ($u,v$) satisfies the support property (\ref{supportedcondition}).  Furthermore, the solution $u$  satisfies the estimates (\ref{decay9}), (\ref{decay10}) and $v$ satisfies the  decay  estimates (\ref{decay5}), (\ref{decay6}).
\end{theorem}
\begin{theorem}{\bf{\text{(Global  existence for $p>\tilde{p}$ and $q\leq\tilde{q}$ (II)).}}}\label{theorem23}
Let $n\geq1, \sigma\in(0,1)$, $m>-1$, $\mu_1,\mu_2>1$, $\nu_1^2,\nu_2^2\geq0$, $m>-1$ such that $\delta_1>(m+1)^2(n+2\sigma-1)^2$ and $\delta_2\geq(m+1)^2(n+2\sigma-1)^2$. Let $p,q>1$,  and when $n>2\sigma$, we assume $p,q\leq\frac{n}{n-2\sigma}$. For $p>\tilde{p}$ and $q\leq\tilde{q}$ satisfying
\begin{equation*}
\frac{p+1}{pq-1}<\frac{(m+1)n+\beta_1-1}{2},
\end{equation*}
there exists a small constant  $\varepsilon>0$ such that for any initial data  $(u_0,u_1,v_0,v_1)\in D^\sigma\times D^\sigma$ satisfying  the support condition (\ref{compact1}),  and if $\Vert(u_0,u_1)\Vert_{D^\sigma}+\Vert(v_0,v_1)\Vert_{D^\sigma}\leq\varepsilon$,
then (\ref{eqs}) admits  a unique global solution $(u,v)\in \Big(C\bigl([1,\infty); H^\sigma(\mathbb{R}^n)\bigr)\Big)^2$ and ($u,v$) satisfies the support property (\ref{supportedcondition}). Furthermore, the solution $u$ satisfies the decay estimates (\ref{decay1}), (\ref{decay2}) and   $v$ satisfies the estimates (\ref{decay11}), (\ref{decay12}).
\end{theorem}

\begin{remark}
Regarding the reasonableness of the conditions in Theorems \ref{theorem21}-\ref{theorem23}, some examples similar to Examples \ref{re2.2}-\ref{re2.3} can be provided; however, we will not elaborate on them here.
\end{remark}
\begin{remark}
The regularity of   initial data   considered in Theorems \ref{theorem21}-\ref{theorem23} is lower than that in Theorems \ref{theorem11}-\ref{theorem13}. Inspired by \cite{PaRe2018}, we consider different function spaces for for the two cases of high and low regularity of the initial data,  as shown in (\ref{normxt}) and (\ref{def51functionspace}), respectively. This is precisely why the requirements for $p,q$ in Theorems \ref{theorem21}-\ref{theorem23} differ from those in  Theorems \ref{theorem11}-\ref{theorem13}. In terms of the proof process, the two cases are similar, therefore, we provide a relatively concise argument for the proofs of Theorems \ref{theorem21}-\ref{theorem23} in Section \ref{section5}.
\end{remark}


In the following remark, we emphasize that the results for the weakly  coupled system  (\ref{eqs}) are not just a simple generalization  of those for the corresponding single equation  (\ref{single2}).

\begin{remark}\label{noeasy}
We summarize the results of Theorems \ref{theorem1}- \ref{theorem23} in the following relationship diagram
\begin{equation*}\label{pic}
\resizebox{0.85\hsize}{!}{$
\begin{aligned}
   &\bullet p\leq\tilde{p}\
   \text{and}\ \ q\leq\tilde{q}\ \implies\ \ \Gamma_m(n,p,q,\beta_1,\beta_2)\geq0\implies \text{Blow-up result},\\
    &\bullet \Gamma_m(n,p,q,\beta_1,\beta_2)<0\implies\left.
   \begin{array}{l}
    p>\tilde{p}\ \text{and} \ q>\tilde{q},\ \text{or}  \\
    p\leq\tilde{p}\  \text{and} \ q>\tilde{q},\ \text{or}\\
    p>\tilde{p}\ \text{and} \ q\leq\tilde{q}
   \end{array}
  \right\}\small\overset{\text{\text{large} $\delta_{1,2}$ }}\implies\text{Global existence.}
\end{aligned}$}
\end{equation*}
Therefore, we can conclude that when the damping terms dominate over the mass terms, the critical curve of  (\ref{eqs}) is described by the curve
\begin{equation*}
\Gamma_m(n,p,q,\beta_1,\beta_2)=0
\end{equation*}
in the $p-q$ plane.
Moreover,  from this relationship diagram we see $\Gamma_m(n,p,q,\beta_1,\beta_2)\geq0$  can be equivalently expressed as
$p\leq\tilde{p}, q\leq\tilde{q}$, while $\Gamma_m(n,p,q,\beta_1,\beta_2)<0$  can be alternatively expressed as: $p>\tilde{p}, q>\tilde{q},$ or
$p\leq\tilde{p}, q>\tilde{q},$ or $p>\tilde{p}, q\leq\tilde{q}$.

Next, we focus on analysing the blow-up result, i.e., Theorem \ref{theorem1}, to illustrate that the main results (Theorems \ref{theorem1}-\ref{theorem23}) are not merely a straightforward extension of those for the single equation (\ref{single2}).
For convenience, denote
\begin{equation*}
p_F^1:=p_F\big((m+1)n+\frac{\mu_1-1-\sqrt{\delta_1}}{2}\big), \ \ p_F^2:=p_F\big((m+1)n+\frac{\mu_2-1-\sqrt{\delta_2}}{2}\big).
\end{equation*}
From 
 (\ref{tildepq}) 
it is clear that $\tilde{p},\tilde{q}$  can be regarded as  "perturbations" of the Fujita index. Indeed, we see
$\tilde{p}=p_F^2, \ \tilde{q}=p_F^1$
hold when $\beta_1=\beta_2$.
By the analysis in Section \ref{s1}, we know when $p\leq p_F((m+1)n+\frac{\mu-1-\sqrt{\delta}}{2})$, the solution to (\ref{single2}) will blow up.  Consequently,  if the result  was directly extended, the blow-up region for (\ref{eqs}) would be described by $p\leq p_F^1,q\leq p_F^2$ (or $p\leq p_F^2,q\leq p_F^1$),  however, the blow-up condition $\Gamma_m(n,p,q,\beta_1,\beta_2)\geq0$ does not coincide with  this,  except in the special case where $\beta_1=\beta_2$.
We present the blow-up region of (\ref{eqs}) in the form of a diagram (Diagram 1).
\begin{figure}[htbp]
\begingroup
\renewcommand{\figurename}{}   
\renewcommand{\thefigure}{}    
\renewcommand{\figurename}{\relax}  
\captionsetup{labelsep=none}
\setlength{\abovecaptionskip}{0pt}
\setlength{\belowcaptionskip}{0pt}
    \centering
\begin{tikzpicture}\label{blowupregion}
    \begin{axis}[
        axis lines=middle, 
        xlabel={$p$}, ylabel={$q$}, 
        xmin=0, xmax=4.5, 
        ymin=0, ymax=4.5, 
        xtick=\empty, ytick=\empty, 
        width=10cm, height=10cm, 
        axis line style={->}, 
        enlargelimits=false, 
        clip=false 
    ]
        \draw[black, fill=black] (axis cs:0, 0) circle (2pt);
        \node[below left] at (axis cs:0, 0) {\textcolor{black}{0}};

        \draw[fill=black] (axis cs:1,0) circle (0.5pt);
        \draw[black] (axis cs:1, 0) circle (2pt); 
        \node[below] at (axis cs:1, 0) {\textcolor{black}{$\max\{1,\frac{1+\beta_1}{2m+1+\beta_2}\}$}};
        \draw[black, fill=black] (axis cs:3, 0) circle (2pt); 
        \node[below] at (axis cs:3, 0) {\textcolor{black}{$\tilde{p}=p_F^2$}};

                \draw[fill=black] (axis cs:0,1) circle (0.5pt);
        \draw[black,] (axis cs:0, 1) circle (2pt); 
        \node[left] at (axis cs:0, 1) {\textcolor{black}{$\max\{1,\frac{1+\beta_2}{2m+1+\beta_1}\}$}};
        \draw[black, fill=black] (axis cs:0, 3) circle (2pt); 
        \node[left] at (axis cs:0, 3) {\textcolor{black}{$\tilde{q}=p_F^1$}};

        \addplot[pattern=north east lines, pattern color=black, draw=none] coordinates {
            (1,1) (3,1) (3,3) (1,3) (1,1)
        };

        \draw[black,dashed] (axis cs:1,1) -- (axis cs:3,1);
        \draw[black, thick] (axis cs:3,1) -- (axis cs:3,3);
        \draw[black, thick] (axis cs:3,3) -- (axis cs:1,3);
        \draw[black,  dashed] (axis cs:1,3) -- (axis cs:1,1);

        \draw[dashed, gray] (axis cs:1,1) -- (axis cs:1,0);
        \draw[dashed, gray] (axis cs:1,1) -- (axis cs:0,1);
        \draw[dashed, gray] (axis cs:3,1) -- (axis cs:3,0);
        \draw[dashed, gray] (axis cs:1,3) -- (axis cs:0,3);

        \draw[->, thick, black] (axis cs:2.55,2.75) -- (axis cs:4,2.75) node[anchor=west] {\large \textcolor{black}{Case1: $\beta_1 = \beta_2$}};

        \draw[blue, fill=blue] (axis cs:3.5, 0) circle (2pt); 
        \node[below] at (axis cs:3.5, 0) {\textcolor{blue}{$\tilde{p}$}};

        \draw[blue, fill=blue] (axis cs:0, 2.25) circle (2pt); 
        \node[left] at (axis cs:0, 2.25) {\textcolor{blue}{$\tilde{q}$}};

        \addplot[pattern=north west lines, pattern color=blue, draw=none] coordinates {
            (1,1) (3.5,1) (3.5,2.25) (1,2.25) (1,1)
        };

        \draw[blue, dashed] (axis cs:3,1) -- (axis cs:3.5,1);
        \draw[blue, thick] (axis cs:3.5,1) -- (axis cs:3.5,2.25);
        \draw[blue, thick] (axis cs:3.5,2.25) -- (axis cs:1,2.25);

        \draw[dashed, blue] (axis cs:3.5,2.25) -- (axis cs:3.5,0);
        \draw[dashed, blue] (axis cs:3.5,2.25) -- (axis cs:0,2.25);

        \draw[->, thick, blue] (axis cs:3.2,2) -- (axis cs:4,2) node[anchor=west] {\large \textcolor{blue}{Case2: $\beta_1 > \beta_2$}};

        \draw[green!70!black, fill=green!70!black] (axis cs:2.25, 0) circle (2pt); 
        \node[below] at (axis cs:2.25, 0) {\textcolor{green!70!black}{$\tilde{p}$}};

        \draw[green!70!black, fill=green!70!black] (axis cs:0, 3.5) circle (2pt); 
        \node[left] at (axis cs:0, 3.5) {\textcolor{green!70!black}{$\tilde{q}$}};

        \addplot[pattern=north east lines, pattern color=green!70!black, draw=none] coordinates {
            (1,1) (2.25,1) (2.25,3.5) (1,3.5) (1,1)
        };

        \draw[green!70!black,dashed] (axis cs:1,3) -- (axis cs:1,3.5);
        \draw[green!70!black, thick] (axis cs:1,3.5) -- (axis cs:2.25,3.5);
        \draw[green!70!black, thick] (axis cs:2.25,3.5) -- (axis cs:2.25,1);

        \draw[dashed, green!70!black] (axis cs:2.25,3.5) -- (axis cs:2.25,0);
        \draw[dashed, green!70!black] (axis cs:2.25,3.5) -- (axis cs:0,3.5);

        \draw[->, thick, green!70!black] (axis cs:1.75,3.25) -- (axis cs:4,3.25) node[anchor=west] {\large \textcolor{green!70!black}{Case3: $\beta_1 < \beta_2$}};

    \end{axis}
\end{tikzpicture}
 \caption{Diagram 1}
    \label{fig:blowupregion}
    \endgroup
\end{figure}
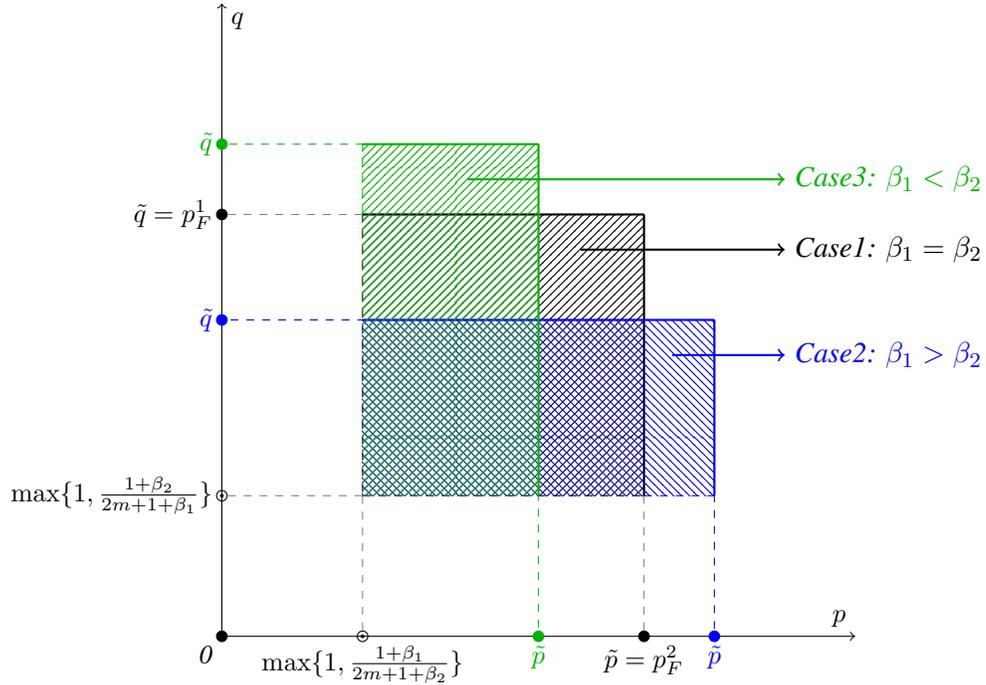

Diagram 1 clearly shows that the blow-up result for (\ref{eqs}) can be regarded as a direct extension of its single equation (\ref{single2}) only when $\beta_1=\beta_2$. In contrast, if $\beta_1\neq\beta_2$, the blow-up region of (\ref{eqs}) undergoes a "shift,"  which more prominently reflects the complex interplay of mutual constraints and balance between the two damping terms and the two mass terms.

\end{remark}

\section{Blow-up result}\label{section3blowup}
The main tool for proving Theorem \ref{theorem1} is the test function. Referring to the methods in \cite{ChenPa2019,PaRe2017}, we find that the presence of $m$ in Gellerstedt operator $\partial_t^2-t^{2m}\Delta$ compels us to seek new test functions. As will be seen in the following proof, the test functions  we use are   closely related to $m$, highlighting the differences between the Gellerstedt operator and the standard  wave operator $\partial_t^2-\Delta$.

Let us assume by contradiction that  $(u,v)$ is a global energy solution to (\ref{eqs}), that is $T=\infty$ in (\ref{solutiondefu}). Note that $h_i(t)=t^{\beta_i},\beta_i=\frac{\mu_i+1-\sqrt{\delta_i}}{2}, i=1,2$ satisfy
\begin{equation}\label{eqhi}
h_i^{\prime\prime}(t)-\frac{\mu_i}{t}h_i^{\prime}(t)+\frac{\mu_i+\nu_i^2}{t^2}h_i(t)=0,\ i=1,2.
\end{equation}
Multiplying both sides of the first equation in (\ref{eqs}) by $h_1(t)$ and the second by $h_2(t)$,  we have the following equations in divergence form
\begin{equation}\label{divergence}
  \left\{
\begin{aligned}
&\partial_{t}^2(h_1u)-\Delta(t^{2m}h_1u)+\partial_t(\frac{\mu_1}{t}h_1u-2h_1^{\prime}u)=h_1\vert v\vert^p,\\
&\partial_{t}^2(h_2v)-\Delta(t^{2m}h_2v)+\partial_t(\frac{\mu_2}{t}h_2v-2h_2^{\prime}v)=h_2\vert u\vert^q.
\end{aligned}
 \right.
  \end{equation}
Choose a non-increasing function $\lambda(t) \in C_0^\infty\big([0,\infty)\big)$, a radially symmetric function $\phi(x) \in C_0^\infty(\mathbb{R}^n)$ such that $\phi$ is non-increasing with respect to $|x|$, and satisfy
\begin{equation}\label{lambdaphi}
\begin{aligned}
&\lambda(t)=1 \ \text{on} \ [0, \frac{1}{2}],\  \text{supp} \lambda(t)\subset [0,1];\\
&\phi(x)=1 \ \text{on} \ B_{\frac{1}{2}}, \ \text{supp}  \phi(x)\subset B_1;\\
&\vert\lambda^{\prime}(t)\vert,\ \vert\lambda^{\prime\prime}(t)\vert\lesssim\lambda(t)^{\frac{1}{r}},\ \ \ \vert\Delta\phi\vert\lesssim\phi^{\frac{1}{r}}\ \ \text{for any}\ r>1.
\end{aligned}
\end{equation}
Regarding the existences of $\lambda$ and $\phi$,  one can  refer to \cite{PaRe2017}.
For $d, R\geq1,$ let $\psi_{d, R}(t,x)=\lambda\bigl(\frac{t-1}{d}\bigr)\phi\bigl(\frac{x}{R^{m+1}}\bigr)$.

Substituting  $\varPhi_1(t,x)=h_1(t)\psi_{d, R}(t,x)$ into (\ref{def3}) and using  (\ref{eqhi}), we have
\begin{equation*}
\begin{aligned}
I_{d,R}:&=\int_1^{d+1}\int_{B_{R^{m+1}}}h_1(t)\psi_{d, R}(t,x)\vert v(t,x)\vert^pdxdt\\
        &=-\underbrace{\int_{B_{R^{m+1}}}\bigl(\frac{\mu_1-1+\sqrt{\delta_1}}{2}u_0(x)+u_1(x)\bigr)\phi(\frac{x}{R^{m+1}})dx}_{K_0}\\
        &\quad+\underbrace{\int_{\frac{d}{2}+1}^{d+1}\int_{B_{R^{m+1}}}u(t,x)h_1(t)\partial_t^2\psi_{d, R}(t,x)dxdt}_{K_1}\\
        &\quad+\underbrace{\int_{\frac{d}{2}+1}^{d+1}\int_{B_{R^{m+1}}}u(t,x)\bigl(2h_1^{\prime}(t)-\frac{\mu_1}{t}h_1(t)\bigr)\partial_t\psi_{d, R}(t,x)dxdt}_{K_2}\\
        &\quad-\underbrace{\int_1^{d+1}\int_{B_{R^{m+1}}\textbackslash B_{\frac{R^{m+1}}{2}}}u(t,x)t^{2m}h_1(t)\Delta\psi_{d, R}(t,x)dxdt}_{K_3}.
\end{aligned}
\end{equation*}
In view of  (\ref{integral>0}), there exists a $R_1>0$ such that for any $R\geq R_1,$  we have $K_0>0$, so
\begin{equation}\label{idr}
I_{d,R}<K_1+K_2-K_3\  \ \text{as}\ \  R\geq R_1.
\end{equation}

Now we estimate $K_1$, $K_2$, and $K_3$ separately. Denote by $p'$, $q'$ the conjugate numbers of $p,q$,  respectively, i.e.,  $\frac{1}{p}+\frac{1}{p'}=1$ and $\frac{1}{q}+\frac{1}{q'}=1$.
Using H$\ddot{\text{o}}$lder's inequality and the properties (\ref{lambdaphi}) of $\lambda(t)$ and $\phi(x)$ implies that
\begin{align}
&\vert K_1\vert
\leq d^{-2}\Biggl(\int_{\frac{d}{2}+1}^{d+1}\int_{B_{R^{m+1}}}h_2(t)\vert u(t,x)\vert^q\Big\vert\lambda^{\prime\prime}\bigl(\frac{t-1}{d}\bigr)\phi\bigl(\frac{x}{R^{m+1}}\bigr)\Big\vert^qdxdt\Biggl)^{\frac{1}{q}}\notag\\
&\quad\quad\quad\quad\quad\quad\times\Biggl(\int_{\frac{d}{2}+1}^{d+1}\int_{B_{R^{m+1}}}h_1^{q'}(t)h_2^{1-q'}(t)dxdt\Biggl)^{\frac{1}{q'}}\notag\\
&\lesssim d^{-2}(d+1)^{\beta_1+\beta_2(\frac{1}{q'}-1)+\frac{1}{q'}}R^{\frac{(m+1)n}{q'}}\Bigl(\int_{\frac{d}{2}+1}^{d+1}\int_{B_{R^{m+1}}}h_2(t)\vert u(t,x)\vert^q\psi_{d,R}(t,x)dxdt\Bigl)^{\frac{1}{q}}\notag\\
&\lesssim (d+1)^{-2+\beta_1+\beta_2(\frac{1}{q'}-1)+\frac{1}{q'}}R^{\frac{(m+1)n}{q'}}\widetilde{J}_{d,R}^{\frac{1}{q}},\label{K1}
\end{align}
where
\begin{equation*}
\widetilde{J}_{d,R}=\int_{\frac{d}{2}+1}^{d+1}\int_{B_{R^{m+1}}}h_2(t)\vert u(t,x)\vert^q\psi_{d,R}(t,x)dxdt.
\end{equation*}
Note that $2h_1^{\prime}(t)-\frac{\mu_1}{t}h_1(t)=(1-\sqrt{\delta_1})t^{-1}h_1(t)$, so
\begin{align}
\vert K_2\vert 
&\lesssim d^{-1}\Biggr(\int_{\frac{d}{2}+1}^{d+1}\int_{B_{R^{m+1}}}h_2(t)\vert u(t,x)\vert^q\Big\vert\lambda^\prime(\frac{t-1}{d})\phi\big(\frac{x}{R^{m+1}}\big)\Big\vert^qdxdt\Biggr)^{\frac{1}{q}}\notag\\
&\quad\quad\quad\times\Big(\int_{\frac{d}{2}+1}^{d+1}\int_{B_{R^{m+1}}}t^{-q'}h_1^{q'}(t)h_2^{1-q'}(t)dxdt\Big)^{\frac{1}{q'}}\notag\\
&\lesssim (d+1)^{-2+\beta_1+\beta_2(\frac{1}{q'}-1)+\frac{1}{q'}}R^{\frac{(m+1)n}{q'}}\widetilde{J}_{d,R}^{\frac{1}{q}}.\label{K2}
\end{align}
For $K_3$, the integration interval with respect to $t$  is $[1,d+1]$,   which differs from that of  $K_1,K_2,$ where the integration interval is $[\frac{d}{2}+1,d+1]$.   Consequently, we need  the condition (\ref{keycon2}) to   ensure  $2mq'+\beta_1q'+\beta_2(1-q')>-1$, i.e., $$\int_1^{d+1}t^{2mq'+\beta_1q'+\beta_2(1-q')}dt\lesssim(d+1)^{2mq'+\beta_1q'+\beta_2(1-q')+1}. $$ Then
\begin{align}
\vert K_3\vert 
&\lesssim R^{-2(m+1)}\Big(\int_1^{d+1}\int_{B_{R^{m+1}}\textbackslash B_{\frac{R^{m+1}}{2}}}t^{2mq'}h_1^{q'}(t)h_2^{1-q'}(t)dxdt\Big)^{\frac{1}{q'}}\notag\\
&\ \  \times\Big(\int_1^{d+1}\int_{B_{R^{m+1}}\textbackslash B_{\frac{R^{m+1}}{2}}}h_2(t)\vert u(t,x)\vert^q\Big\vert\lambda\bigl(\frac{t-1}{d}\bigr)\Delta\phi\bigl(\frac{x}{R^{m+1}}\bigr)\Bigr\vert^q dxdt\Bigr)^{\frac{1}{q}}\notag\\
&\lesssim (d+1)^{2m+\beta_1+\beta_2(\frac{1}{q'}-1)+\frac{1}{q'}}R^{-2(m+1)+\frac{(m+1)n}{q'}}\widehat{J}^{\frac{1}{q}}_{d,R},\label{K3}
\end{align}
where
\begin{equation*}
\widehat{J}_{d,R}=\int_{1}^{d+1}\int_{B_{R^{m+1}}\textbackslash B_{\frac{R^{m+1}}{2}}}h_2(t)\vert u(t,x)\vert^q\psi_{d,R}(t,x)dxdt.
\end{equation*}
By (\ref{idr})-(\ref{K3}), we get the estimate of $I_{d,R}$  as
\begin{align}
I_{d,R}\lesssim& (d+1)^{-2+\beta_1+\beta_2(\frac{1}{q'}-1)+\frac{1}{q'}}R^{\frac{(m+1)n}{q'}}\widetilde{J}_{d,R}^{\frac{1}{q}}\notag\\
&+(d+1)^{2m+\beta_1+\beta_2(\frac{1}{q'}-1)+\frac{1}{q'}}R^{-2(m+1)+\frac{(m+1)n}{q'}}\widehat{J}^{\frac{1}{q}}_{d,R},\
\ \text{for any}\ R\geq R_1.\label{idr0}
\end{align}

Substituting $\varPhi_2(t,x)=h_2(t)\psi_{d, R}(t,x)$ into (\ref{def32}) and using (\ref{eqhi}) yields
\begin{equation*}
\begin{aligned}
J_{d,R}:&=\int_1^{d+1}\int_{B_{R^{m+1}}}h_2(t)\psi_{d, R}(t,x)\vert u(t,x)\vert^qdxdt\\
        &=-\underbrace{\int_{B_{R^{m+1}}}\bigl(\frac{\mu_2-1+\sqrt{\delta_2}}{2}v_0(x)+v_1(x)\bigr)\phi(\frac{x}{R^{m+1}})dx}_{L_0}\\
        &\quad+\underbrace{\int_{\frac{d}{2}+1}^{d+1}\int_{B_{R^{m+1}}}v(t,x)h_2(t)\partial_t^2\psi_{d, R}(t,x)dxdt}_{L_1}\\
        &\quad+\underbrace{\int_{\frac{d}{2}+1}^{d+1}\int_{B_{R^{m+1}}}v(t,x)\bigl(2h_2^{\prime}(t)-\frac{\mu_2}{t}h_2(t)\bigr)\partial_t\psi_{d, R}(t,x)dxdt}_{L_2}\\
        &\quad-\underbrace{\int_1^{d+1}\int_{B_{R^{m+1}}\textbackslash B_{\frac{R^{m+1}}{2}}}v(t,x)t^{2m}h_2(t)\Delta\psi_{d, R}(t,x)dxdt}_{L_3}.
\end{aligned}
\end{equation*}
By (\ref{integral>0}),  there exists a $R_2>0$ such that $L_0>0$ holds for any $R\geq R_2,$   so
\begin{equation}\label{idj}
J_{d,R}<L_1+L_2-L_3 \ \ \text{as}\ \ R\geq R_2.
\end{equation}

By employing the same  techniques as in the estimates of $K_1,K_2,K_3$, we can derive the estimates of $L_1,L_2,L_3$.
\begin{align}
&\vert L_1\vert 
\leq d^{-2}\Biggl(\int_{\frac{d}{2}+1}^{d+1}\int_{B_{R^{m+1}}}h_1(t)\vert v(t,x)\vert^p\Big\vert\lambda^{\prime\prime}\bigl(\frac{t-1}{d}\bigr)\phi\bigl(\frac{x}{R^{m+1}}\bigr)\Big\vert^pdxdt\Biggl)^{\frac{1}{p}}\notag\\
&\quad\quad\quad\quad\quad\quad\times\Biggl(\int_{\frac{d}{2}+1}^{d+1}\int_{B_{R^{m+1}}}h_2^{p'}(t)h_1^{1-p'}(t)dxdt\Biggl)^{\frac{1}{p'}}\notag\\
&\lesssim d^{-2}(d+1)^{\beta_1(\frac{1}{p'}-1)+\beta_2+\frac{1}{p'}}R^{\frac{(m+1)n}{p'}}\Bigl(\int_{\frac{d}{2}+1}^{d+1}\int_{B_{R^{m+1}}}h_1(t)\vert v(t,x)\vert^p\psi_{d,R}(t,x)dxdt\Bigl)^{\frac{1}{p}}\notag\\
&\lesssim (d+1)^{-2+\beta_1(\frac{1}{p'}-1)+\beta_2+\frac{1}{p'}}R^{\frac{(m+1)n}{p'}}\widetilde{I}_{d,R}^{\frac{1}{p}},\label{L1}
\end{align}
where
\begin{equation*}
\widetilde{I}_{d,R}=\int_{\frac{d}{2}+1}^{d+1}\int_{B_{R^{m+1}}(0)}h_1(t)\vert v(t,x)\vert^p\psi_{d,R}(t,x)dxdt.
\end{equation*}
Since $2h_2^{\prime}(t)-\frac{\mu_2}{t}h_2(t)=(1-\sqrt{\delta_2})t^{-1}h_2(t)$,  it follows that
\begin{align}
&\vert L_2\vert 
\lesssim d^{-1}\Biggr(\int_{\frac{d}{2}+1}^{d+1}\int_{B_{R^{m+1}}}h_1(t)\vert v(t,x)\vert^p\Big\vert\lambda^\prime(\frac{t-1}{d})\phi\big(\frac{x}{R^{m+1}}\big)\Big\vert^pdxdt\Biggr)^{\frac{1}{p}}\notag \\
&\quad\quad\quad\quad\quad\quad\times\Big(\int_{\frac{d}{2}+1}^{d+1}\int_{B_{R^{m+1}}}t^{-p'}h_2^{p'}(t)h_1^{1-p'}(t)dxdt\Big)^{\frac{1}{p'}}\notag \\
&\lesssim (d+1)^{-2+\beta_1(\frac{1}{p'}-1)+\beta_2+\frac{1}{p'}}R^{\frac{(m+1)n}{p'}}\widetilde{I}_{d,R}^{\frac{1}{p}}.\label{L2}
\end{align}
The condition (\ref{keycon2}) also  ensures that $2mp'+\beta_2p'+\beta_1(1-p')>-1$, which guarantees that
\begin{equation*}
\int_1^{d+1}t^{2mp'+\beta_2p'+\beta_1(1-p')}dt\lesssim(d+1)^{2mp'+\beta_2p'+\beta_1(1-p')+1}, \end{equation*}
then
\begin{align}
&\vert L_3\vert
\lesssim R^{-2(m+1)}\Big(\int_1^{d+1}\int_{B_{R^{m+1}}\textbackslash B_{\frac{R^{m+1}}{2}}}t^{2mp'}h_2^{p'}(t)h_1^{1-p'}(t)dxdt\Big)^{\frac{1}{p'}}\notag\\
&\times\Big(\int_1^{d+1}\int_{B_{R^{m+1}}\textbackslash B_{\frac{R^{m+1}}{2}}}h_1(t)\vert v(t,x)\vert^p\Big\vert\lambda\bigl(\frac{t-1}{d}\bigr)\Delta\phi\bigl(\frac{x}{R^{m+1}}\bigr)\Bigr\vert^p dxdt\Bigr)^{\frac{1}{p}}\notag\\
&\lesssim (d+1)^{2m+\beta_1(\frac{1}{p'}-1)+\beta_2+\frac{1}{p'}}R^{-2(m+1)+\frac{(m+1)n}{p'}}\widehat{I}^{\frac{1}{p}}_{d,R},\label{L3}
\end{align}
where
\begin{equation*}
\widehat{I}_{d,R}=\int_{1}^{d+1}\int_{B_{R^{m+1}}\textbackslash B_{\frac{R^{m+1}}{2}}}h_1(t)\vert v(t,x)\vert^p\psi_{d,R}(t,x)dxdt.
\end{equation*}
Substituting the estimates (\ref{L1}), (\ref{L2}), (\ref{L3}) of $L_1$, $L_2$ and $L_3$ into (\ref{idj}) gives that
\begin{equation}\label{jdr0}
\begin{aligned}
J_{d,R}\lesssim & (d+1)^{-2+\beta_1(\frac{1}{p'}-1)+\beta_2+\frac{1}{p'}}R^{\frac{(m+1)n}{p'}}\widetilde{I}_{d,R}^{\frac{1}{p}}\\
&+(d+1)^{2m+\beta_1(\frac{1}{p'}-1)+\beta_2+\frac{1}{p'}}R^{-2(m+1)+\frac{(m+1)n}{p'}}\widehat{I}^{\frac{1}{p}}_{d,R}
\end{aligned}
\end{equation}
holds for any $R\geq R_2$.

Let $d=R\geq\max\{1,R_1,R_2\}$, from (\ref{idr0}) and (\ref{jdr0}), we get
\begin{align}
&I_{R,R}\lesssim(R+1)^{-2+\beta_1+\beta_2(\frac{1}{q'}-1)+\frac{1}{q'}+\frac{(m+1)n}{q'}}
\big(\widetilde{J}_{R,R}^{\frac{1}{q}}+\widehat{J}^{\frac{1}{q}}_{R,R}\big),\label{IJRR1}\\
&J_{R,R}\lesssim(R+1)^{-2+\beta_2+\beta_1(\frac{1}{p'}-1)+\frac{1}{p'}+\frac{(m+1)n}{p'}}
\big(\widetilde{I}_{R,R}^{\frac{1}{p}}+\widehat{I}^{\frac{1}{p}}_{R,R}\big).\label{IJRR2}
\end{align}
Note that $\widetilde{I}_{R,R},\widehat{I}_{R,R}\leq I_{R,R}$ and $\widetilde{J}_{R,R},\widehat{J}_{R,R}\leq J_{R,R}$.  Using  (\ref{IJRR1}) and (\ref{IJRR2}),  we obtain
\begin{equation}\label{IRReq}
\begin{aligned}
&I_{R,R}\lesssim(R+1)^{-2+\beta_1+\beta_2(\frac{1}{q'}-1)+\frac{1}{q'}+\frac{(m+1)n}{q'}}J_{R,R}^{\frac{1}{q}}\\
&\lesssim (R+1)^{-2+\beta_1+\beta_2(\frac{1}{q'}-1)+\frac{1}{q'}+\frac{(m+1)n}{q'}+\frac{1}{q}
\big(-2+\beta_1(\frac{1}{p'}-1)+\beta_2+\frac{1}{p'}+\frac{(m+1)n}{p'}\big)}I_{R,R}^{\frac{1}{pq}}\\
&=(R+1)^{-2-\frac{2}{q}+(1-\frac{1}{pq})\big((m+1)n+\beta_1+1\big)}I_{R,R}^{\frac{1}{pq}}
\end{aligned}
\end{equation}
and
\begin{equation}\label{JRReq}
\begin{aligned}
&J_{R,R}\lesssim(R+1)^{-2+\beta_2+\beta_1(\frac{1}{p'}-1)+\frac{1}{p'}+\frac{(m+1)n}{p'}}I_{R,R}^{\frac{1}{p}}\\
&\lesssim (R+1)^{-2+\beta_2+\beta_1(\frac{1}{p'}-1)+\frac{1}{p'}+\frac{(m+1)n}{p'}+\frac{1}{p}\big(-2+\beta_1+\beta_2(\frac{1}{q'}-1)+\frac{1}{q'}
+\frac{(m+1)n}{q'}\big)}J_{R,R}^{\frac{1}{pq}}\\
&=(R+1)^{-2-\frac{2}{p}+(1-\frac{1}{pq})\big((m+1)n+\beta_2+1\big)}J_{R,R}^{\frac{1}{pq}}.
\end{aligned}
\end{equation}
In other words,
\begin{align}
&I_{R,R}\lesssim (R+1)^{-2p\frac{q+1}{pq-1}+(m+1)n+\beta_1+1},\label{Irreq}\\
&J_{R,R}\lesssim (R+1)^{-2q\frac{p+1}{pq-1}+(m+1)n+\beta_2+1}.\label{Jrreq}
\end{align}
If $-2p\frac{q+1}{pq-1}+(m+1)n+\beta_1+1<0$, i.e., $\frac{p+1}{pq-1}-\frac{\beta_1-1}{2}>\frac{(m+1)n}{2}$, by (\ref{Irreq}), $\lim\limits_{R\rightarrow+\infty}I_{R,R}=0$ holds.  Then
\begin{equation}\label{contheorem}
\resizebox{0.913\hsize}{!}{$\begin{aligned}
\lim\limits_{R\rightarrow+\infty}\int_1^{R+1}\int_{B_{R^{m+1}}}h_1(t)\vert v(t,x)&\vert^p\psi_{R, R}(t,x)dxdt=\int_1^{\infty}\int_{R^{n}}h_1(t)\vert v(t,x)\vert^pdxdt=0,
\end{aligned}$}
\end{equation}
which yields $v=0$ almost everywhere. This contradicts the assumption (\ref{integral>0}) on ($v_0, v_1$).
If $-2p\frac{q+1}{pq-1}+(m+1)n+\beta_1+1=0$, i.e., $\frac{p+1}{pq-1}-\frac{\beta_1-1}{2}=\frac{(m+1)n}{2}$, by (\ref{Irreq}), $\lim\limits_{R\rightarrow+\infty}I_{R,R}\leq C$ holds. Then
\begin{equation*}
\begin{aligned}
&\lim\limits_{R\rightarrow+\infty}\int_1^{R+1}\int_{B_{R^{m+1}}}h_1(t)\vert v(t,x)\vert^p\psi_{R, R}(t,x)dxdt=\int_1^{\infty}\int_{R^{n}}h_1(t)\vert v(t,x)\vert^pdxdt\leq C,
\end{aligned}
\end{equation*}
which means $h_1(t)\vert v(t,x)\vert^p\in L^1([1,\infty)\times\mathbb{R}^n)$. Consequently, employing the dominated convergence theorem and the definitions of $\widetilde{I}_{R,R}$, $\widehat{I}_{R,R}$ gives that
\begin{equation*}
\lim\limits_{R\rightarrow+\infty}\widetilde{I}_{R,R}=\lim\limits_{R\rightarrow+\infty}\widehat{I}_{R,R}=0.
\end{equation*}
From (\ref{IJRR1}), we have
$\lim\limits_{R\rightarrow+\infty}I_{R,R}=0$, so (\ref{contheorem}) holds in this case and we derive the same contradiction.

By applying (\ref{Jrreq}) and the same argument above, we can get $u=0$ almost everywhere under the condition $\frac{q+1}{pq-1}-\frac{\beta_2-1}{2}\geq\frac{(m+1)n}{2}$, which contradicts the assumption (\ref{integral>0}) on ($u_0, u_1$).

In summary, assuming  $T=\infty$ leads to the contradiction under the condition
\begin{equation}\label{blowupconditionproof}
\frac{p+1}{pq-1}-\frac{\beta_1-1}{2}\geq\frac{(m+1)n}{2}\ \ \ \text{or}\ \ \ \frac{q+1}{pq-1}-\frac{\beta_2-1}{2}\geq\frac{(m+1)n}{2},
\end{equation}
provided $p,q$ fulfill (\ref{keycon2}),  which means the energy solution $(u,v)$ cannot exist globally. Since the  relations  (\ref{blowupconditionproof}) on $p, q$ are equivalent
\begin{equation*}
\Gamma_m(n,p,q,\beta_1,\beta_2)\geq0,
\end{equation*}
the proof is completed.

\section{Global existence of solutions with high regularity of initial data}\label{section4high}
In this section, we aim to establish the global existence of  solutions with high regularity of initial data, i.e., Theorems \ref{theorem11} - \ref{theorem13}. The key tool is the estimates of the solution and its derivatives for the corresponding linear equation of (\ref{eqs}), which were precisely established in  our recent study \cite{LiGuo2025}.  These results will be outlined in Section \ref{sec4.1} without repeating the proofs. Based on these estimates, we can construct the appropriate space and the associated solution operator, then apply Duhamel's principle along with the contraction mapping principle to derive the global existence.
\subsection{The estimates for the corresponding linear equation}\label{sec4.1}
For the corresponding linear equation of (\ref{eqs})
\begin{equation}\label{lineareq}
  \left\{
\begin{aligned}
&\partial_t^2u-t^{2m}\Delta u+\frac{\mu}{t}\partial_tu+\frac{\nu^2}{t^2}u=0,\ &\quad &t>\tau\geq1,\\
&u(\tau,x)= f(x), \ \partial_tu(\tau,x)= g(x),               &\quad &x\in\mathbb{R}^n,
\end{aligned}
 \right.
  \end{equation}
we established the $(L^1\cap L^2)-L^2$ estimates in \cite{LiGuo2025} and we list the results.
\begin{proposition}\label{linearestimate}
Let $\sigma>0$, $\delta=(\mu-1)^2-4\nu^2>0$ and $(f,g)\in D^\sigma$, then for any $\kappa\in[0,\sigma]$, the solution $u$ to (\ref{lineareq}) with $\tau=1$ satisfies
\begin{equation}\label{linearesu}
\begin{aligned}
\Vert &u(t,\cdot)\Vert_{\dot{H}^\kappa}\lesssim\Vert (f,g)\Vert_{D^\kappa}
\left\{
\begin{aligned}
&t^{-\frac{\mu+m}{2}},\ &\text{if}\ \kappa>\frac{\sqrt\delta}{2(m+1)}+\frac{1}{2}-\frac{n}{2},\\
&t^{-\frac{\mu+m}{2}}\bigl(1+\log{t}\bigr)^{\frac{1}{2}},\ &\text{if}\ \kappa=\frac{\sqrt\delta}{2(m+1)}+\frac{1}{2}-\frac{n}{2},\\
&t^{-(m+1)(\kappa+\frac{n}{2})+\frac{\sqrt\delta-\mu+1}{2}},\ &\text{if}\ \kappa<\frac{\sqrt\delta}{2(m+1)}+\frac{1}{2}-\frac{n}{2}.
\end{aligned}
\right.
\end{aligned}
\end{equation}
Moreover, for any $\kappa\in[1,\sigma],$
\begin{equation}\label{linearesut}
\resizebox{0.926\hsize}{!}{$\begin{aligned}
\Vert &\partial_tu(t,\cdot)\Vert_{\dot{H}^{\kappa-1}}\lesssim\Vert (f,g)\Vert_{D^\kappa}
\left\{
\begin{aligned}
&t^{m-\frac{\mu+m}{2}},\ &\text{if}\ \kappa>\frac{\sqrt\delta}{2(m+1)}+\frac{1}{2}-\frac{n}{2},\\
&t^{m-\frac{\mu+m}{2}}\bigl(1+\log{t}\bigr)^{\frac{1}{2}},\ &\text{if}\ \kappa=\frac{\sqrt\delta}{2(m+1)}+\frac{1}{2}-\frac{n}{2},\\
&t^{m-(m+1)(\kappa+\frac{n}{2})+\frac{\sqrt\delta-\mu+1}{2}},\ &\text{if}\ \kappa<\frac{\sqrt\delta}{2(m+1)}+\frac{1}{2}-\frac{n}{2}.
\end{aligned}\right.
\end{aligned}$}
\end{equation}

\end{proposition}

\begin{proposition}\label{unlinearestimate}
Let $\delta=(\mu-1)^2-4\nu^2>0$, $f=0$, $g\in H^{[\sigma-1]_+}\cap L^1$ with $\sigma>0$, then for any $\kappa\in[0,\sigma]$, the solution $u$ to (\ref{lineareq}) satisfies

\begin{equation}\label{u0=0u}
\begin{aligned}
\Vert u(t,\cdot)&\Vert_{\dot{H}^\kappa}\lesssim\bigl(\Vert g\Vert_{L^1}+\tau^{(m+1)(\frac{n}{2}+[\kappa-1]_+)}\Vert g\Vert_{\dot{H}^{[\kappa-1]_+}}\bigr)\\[5pt]
                &\times
                \left\{
                \begin{aligned}
                &t^{-\frac{\mu+m}{2}}\tau^{-(m+1)(\kappa+\frac{n}{2})+\frac{\mu+m}{2}+1},\ &\text{if}\ \kappa>\frac{\sqrt\delta}{2(m+1)}+\frac{1}{2}-\frac{n}{2},\\
                &t^{-\frac{\mu+m}{2}}\tau^{-\frac{\sqrt\delta-\mu-1}{2}}\bigl(1+\log\frac{t}{\tau}\bigr)^{\frac{1}{2}},\ &\text{if}\ \kappa=\frac{\sqrt\delta}{2(m+1)}+\frac{1}{2}-\frac{n}{2},\\
                &t^{-(m+1)(\kappa+\frac{n}{2})+\frac{\sqrt\delta-\mu+1}{2}}\tau^{-\frac{\sqrt\delta-\mu-1}{2}},\ &\text{if}\ \kappa<\frac{\sqrt\delta}{2(m+1)}+\frac{1}{2}-\frac{n}{2}.\\
                \end{aligned}
                \right.
\end{aligned}
\end{equation}
Moreover, for any $\kappa\in [1,\sigma]$, 
\begin{align}
&\Vert \partial_tu(t,\cdot)\Vert_{\dot{H}^{\kappa-1}}\lesssim\bigl(\Vert g\Vert_{L^1}+\tau^{(m+1)(\frac{n}{2}+\kappa-1)}\Vert g\Vert_{\dot{H}^{\kappa-1}}+\tau^{(m+1)(\frac{n}{2}+[\kappa-2]_+)}\Vert g\Vert_{\dot{H}^{[\kappa-2]_+}}\bigr)\nonumber\\[5pt]
                &\quad\quad\times
                \left\{
                \begin{aligned}\label{u0=0ut}
                &t^{m-\frac{\mu+m}{2}}\tau^{-(m+1)(\kappa+\frac{n}{2})+\frac{\mu+m}{2}+1},\ &\text{if}\ \kappa>\frac{\sqrt\delta}{2(m+1)}+\frac{1}{2}-\frac{n}{2},\\
                &t^{m-\frac{\mu+m}{2}}\tau^{-\frac{\sqrt\delta-\mu-1}{2}}\bigl(1+\log\frac{t}{\tau}\bigr)^{\frac{1}{2}},\ &\text{if}\ \kappa=\frac{\sqrt\delta}{2(m+1)}+\frac{1}{2}-\frac{n}{2},\\
                &t^{m-(m+1)(\kappa+\frac{n}{2})+\frac{\sqrt\delta-\mu+1}{2}}\tau^{-\frac{\sqrt\delta-\mu-1}{2}},\ &\text{if}\ \kappa<\frac{\sqrt\delta}{2(m+1)}+\frac{1}{2}-\frac{n}{2}.\\
                \end{aligned}
                \right.
\end{align}

\end{proposition}

\subsection{The proof of Theorem \ref{theorem11}}\label{prooftheorem2}
Denote by $E_0^{\ \mu,\nu}(t,\tau,x)$ and $E_1^{\ \mu,\nu}(t,\tau,x)$ the  fundamental solutions to (\ref{lineareq}) with the initial data $(f,g)=(\delta_0,0)$ and $(0,\delta_0)$, respectively, where $\delta_0$ is the Dirac function. By Duhamel's principle, the solution to  (\ref{eqs}) can be expressed as
\begin{equation}\label{expressuv}
\resizebox{0.925\hsize}{!}{$
\begin{aligned}
&u(t,x)=E_0^{\ \mu_1,\nu_1}(t,1,x)\ast u_0(x)+E_1^{\ \mu_1,\nu_1}(t,1,x)\ast u_1(x)+\int_1^tE_1^{\ \mu_1,\nu_1}(t,\tau,x)\ast\vert v(\tau,x)\vert^pd\tau,\\
&v(t,x)=E_0^{\ \mu_2,\nu_2}(t,1,x)\ast v_0(x)+E_1^{\ \mu_2,\nu_2}(t,1,x)\ast v_1(x)+\int_1^tE_1^{\ \mu_2,\nu_2}(t,\tau,x)\ast\vert u(\tau,x)\vert^qd\tau,
\end{aligned}$}
\end{equation}
where $\ast$ denotes the convolution with respect to $x$. For convenience, let
\begin{equation}\label{ulvlh1h2}
\begin{aligned}
&u^l(t,x)=E_0^{\ \mu_1,\nu_1}(t,1,x)\ast u_0(x)+E_1^{\ \mu_1,\nu_1}(t,1,x)\ast u_1(x),\\
&v^l(t,x)=E_0^{\ \mu_2,\nu_2}(t,1,x)\ast v_0(x)+E_1^{\ \mu_2,\nu_2}(t,1,x)\ast v_1(x),\\
&H_1(v)(t,x)=\int_1^tE_1^{\ \mu_1,\nu_1}(t,\tau,x)\ast\vert v(\tau,x)\vert^pd\tau,\\
&H_2(u)(t,x)=\int_1^tE_1^{\ \mu_2,\nu_2}(t,\tau,x)\ast\vert u(\tau,x)\vert^qd\tau.
\end{aligned}
\end{equation}
For the sake of clarity, let us introduce  some notations and expressions. $\ell_i(t) \ (i=1,2)$ are shown as (\ref{l(t)}),
if $\sigma>1$, let
\begin{align}
M_1(t,u)=&t^{-\frac{\sqrt{\delta_1}-\mu_1+1}{2}+(m+1)\frac{n}{2}}\Vert u(t,\cdot)\Vert_{L^2}\notag\\
              &+t^{-\frac{\sqrt{\delta_1}-\mu_1+1}{2}+(m+1)(\sigma+\frac{n}{2})}\ell_1^{-1}(t)\Vert u(t,\cdot)\Vert_{\dot{H}^\sigma}\notag\\
                               &+t^{-m-\frac{\sqrt{\delta_1}-\mu_1+1}{2}+(m+1)(1+\frac{n}{2})}\Vert \partial_tu(t,\cdot)\Vert_{L^2}\label{M1}\\
                               &+t^{-m-\frac{\sqrt{\delta_1}-\mu_1+1}{2}+(m+1)(\sigma+\frac{n}{2})}\ell_1^{-1}(t)\Vert \partial_tu(t,\cdot)\Vert_{\dot{H}^{\sigma-1}},\notag\\
M_2(t,v)=&t^{-\frac{\sqrt{\delta_2}-\mu_2+1}{2}+(m+1)\frac{n}{2}}\Vert v(t,\cdot)\Vert_{L^2}\notag\\
                               &+t^{-\frac{\sqrt{\delta_2}-\mu_2+1}{2}+(m+1)(\sigma+\frac{n}{2})}\ell_2^{-1}(t)\Vert v(t,\cdot)\Vert_{\dot{H}^\sigma}\notag\\
                               &+t^{-m-\frac{\sqrt{\delta_2}-\mu_2+1}{2}+(m+1)(1+\frac{n}{2})}\Vert \partial_tv(t,\cdot)\Vert_{L^2}\label{M2}\\
                               &+t^{-m-\frac{\sqrt{\delta_2}-\mu_2+1}{2}+(m+1)(\sigma+\frac{n}{2})}\ell_2^{-1}(t)\Vert \partial_tv(t,\cdot)\Vert_{\dot{H}^{\sigma-1}};\notag
\end{align}
while if $\sigma=1$,
\begin{align}
M_1(t,u)=&t^{-\frac{\sqrt{\delta_1}-\mu_1+1}{2}+(m+1)\frac{n}{2}}\Vert u(t,\cdot)\Vert_{L^2}\notag\\
                               &+t^{-\frac{\sqrt{\delta_1}-\mu_1+1}{2}+(m+1)(\sigma+\frac{n}{2})}\ell_1^{-1}(t)\Vert u(t,\cdot)\Vert_{\dot{H}^\sigma}\notag\\
                               &+t^{-m-\frac{\sqrt{\delta_1}-\mu_1+1}{2}+(m+1)(\sigma+\frac{n}{2})}\ell_1^{-1}(t)\Vert \partial_tu(t,\cdot)\Vert_{\dot{H}^{\sigma-1}},\label{M11}\\
M_2(t,v)=&t^{-\frac{\sqrt{\delta_2}-\mu_2+1}{2}+(m+1)\frac{n}{2}}\Vert v(t,\cdot)\Vert_{L^2}\notag\\
                               &+t^{-\frac{\sqrt{\delta_2}-\mu_2+1}{2}+(m+1)(\sigma+\frac{n}{2})}\ell_2^{-1}(t)\Vert v(t,\cdot)\Vert_{\dot{H}^\sigma}\label{M21}\\
                               &+t^{-m-\frac{\sqrt{\delta_2}-\mu_2+1}{2}+(m+1)(\sigma+\frac{n}{2})}\ell_2^{-1}(t)\Vert \partial_tv(t,\cdot)\Vert_{\dot{H}^{\sigma-1}}.\notag
\end{align}

For $\sigma\geq1$  and $T>1$, define  the function space
\begin{equation*}
\begin{aligned}
&\mathcal{X}(T):=\Big\{(u,v)\in(C\bigl([1,T];H^\sigma\bigr)\cap C^1\bigl([1,T];H^{\sigma-1}\bigr)\Big)^2\\
&\quad\quad\quad\quad\quad\quad \text{such that}\ \ \text{supp}\big(u(t,\cdot),v(t,\cdot)\big)\subset B_{\phi_m(t)-\phi_m(1)+M}\Big\}
\end{aligned}
\end{equation*}
equipped with the norm
\begin{equation}\label{normxt}
\Vert(u,v)\Vert_{\mathcal{X}(T)}= \sup\limits_{t\in[1,T]}\big(t^{-\alpha_1}M_1(t,u)+t^{-\alpha_2}M_2(t,v)\big),
\end{equation}
where $\phi_m(t)$ is defined by (\ref{compact2}) and  $\alpha_1, \alpha_2$ are given by
\begin{equation}\label{alpha1}
\alpha_1=\left\{
\begin{aligned}
&\big((m+1)n+\beta_2-1\big)(\tilde{p}-p),&\quad& \text{if}\ \ p<\tilde{p}, \\
&\epsilon, &\quad&\text{if}\ \ p=\tilde{p},\\
&0,&\quad&\text{if}\ \ p>\tilde{p},
\end{aligned}
\right.
\end{equation}
\begin{equation}\label{alpha2}
\alpha_2=\left\{
\begin{aligned}
&\big((m+1)n+\beta_1-1\big)(\tilde{q}-q),&\quad& \text{if}\ \ q<\tilde{q}, \\
&\epsilon, &\quad&\text{if}\ \ q=\tilde{q},\\
&0,&\quad&\text{if}\ \ q>\tilde{q},
\end{aligned}
\right.
\end{equation}
where $\epsilon>0$ is  sufficiently small. Specially, $\alpha_1=\alpha_2=0$ under the assumptions of Theorem \ref{theorem11}.

Based on the  representations  (\ref{expressuv}) of the solution to (\ref{eqs}),  we define the operator $\mathcal{N}$ by
\begin{equation}\label{operator}
\mathcal{N}(u,v):=\big(u^l+H_1(v),v^l+H_2(u)\big)
\end{equation}
and introduce the subset of $\mathcal{X}(T)$
\begin{equation*}
X(T,K)=\bigl\{(u,v)\in\mathcal{X}(T):\Vert (u,v)\Vert_{\mathcal{X}(T)}\leq K\bigr\},
\end{equation*}
where $u^l,v^l,H_1(v),H_2(u)$ are defined in (\ref{ulvlh1h2}) and $K$ is a positive constant to be determined. Then we have
\begin{proposition}\label{keypro}
Under the conditions of Theorem \ref{theorem11},
there exists a constant $C>0$ such that for any $T>1$  and any $(u,v), (\tilde{u},\tilde{v})\in \mathcal{X}(T)$,
\begin{align}
&\Vert \mathcal{N}(u,v)\Vert_{\mathcal{X}(T)}\leq C\bigl(\Vert(u_0,u_1)\Vert_{D^\sigma}+\Vert(v_0,v_1)\Vert_{D^\sigma}\big)+C\big(\Vert (u,v)\Vert_{\mathcal{X}{(T)}}^p+\Vert (u,v)\Vert_{\mathcal{X}{(T)}}^q\bigr),\label{key1}\\
&\Vert \mathcal{N}(u,v)-\mathcal{N}(\tilde{u},\tilde{v})\Vert_{\mathcal{X}(T)}\leq C \Vert (u,v)-(\tilde{u},\tilde{v})\Vert_{\mathcal{X}(T)}\notag\\
&\quad\quad\quad\quad\quad\quad\quad\quad\times\bigl(\Vert (u,v)\Vert_{\mathcal{X}(T)}^{p-1}+\Vert (u,v)\Vert_{\mathcal{X}(T)}^{q-1}+\Vert (\tilde{u},\tilde{v})\Vert_{\mathcal{X}(T)}^{p-1}+\Vert (\tilde{u},\tilde{v})\Vert_{\mathcal{X}(T)}^{q-1}\bigr).\label{key2}
\end{align}
\end{proposition}

The proofs of (\ref{key1}) and (\ref{key2}) are rather lengthy, so we  present them in detail in the following two sections. To conclude this section, we use Proposition \ref{keypro} to establish the global existence of solutions for $\sigma\geq1$, thereby completing  the proof of Theorem \ref{theorem11}.
\begin{proof}[Proof of Theorem \ref{theorem11}]
Choose $K=3C\Vert(u_0,u_1)\Vert_{D^\sigma}+3C\Vert(v_0,v_1)\Vert_{D^\sigma}$ in $X(T,K)$, where  $C$ is the constant mentioned in Proposition \ref{keypro}.  Then for any $(u,v),(\tilde{u},\tilde{v})\in \mathcal{X}(T,K)$, it follows from (\ref{key1}) that
\begin{align}
&\Vert \mathcal{N}(u,v)\Vert_{\mathcal{X}(T)} 
\leq C(\Vert(u_0,u_1)\Vert_{D^\sigma}+\Vert(v_0,v_1)\Vert_{D^\sigma})\notag\\
&\times \Big(1+(3C)^p\big(\Vert (u_0,u_1)\Vert_{D^\sigma}+\Vert (v_0,v_1)\Vert_{D^\sigma}\big)^{p-1}+(3C)^q\big(\Vert (u_0,u_1)\Vert_{D^\sigma}+\Vert (v_0,v_1)\Vert_{D^\sigma}\big)^{q-1}\Big)\notag\\
&\leq 3C(\Vert (u_0,u_1)\Vert_{D^\sigma}+\Vert (v_0,v_1)\Vert_{D^\sigma})=K\label{keyyy1}
\end{align}
provided that  $\Vert(u_0,u_1)\Vert_{D^\sigma}+\Vert(v_0,v_1)\Vert_{D^\sigma}\leq\min\big\{\bigl(\frac{1}{3^pC^p}\bigr)^{\frac{1}{p-1}}, \bigl(\frac{1}{3^qC^q}\bigr)^{\frac{1}{q-1}}\big\}$. Moreover, by (\ref{key2}), we have
\begin{align}
&\Vert \mathcal{N}(u,v)-\mathcal{N}(\tilde{u},\tilde{v})\Vert_{\mathcal{X}(T)}
\leq \Vert (u,v)-(\tilde{u},\tilde{v})\Vert_{\mathcal{X}(T)}\notag\\
&\ \times2C\Big(\big(3C(\Vert (u_0,u_1)\Vert_{D^\sigma}+\Vert (v_0,v_1)\Vert_{D^\sigma})\big)^{p-1}+\big(3C(\Vert (u_0,u_1)\Vert_{D^\sigma}+\Vert (v_0,v_1)\Vert_{D^\sigma})\big)^{q-1}\Big)\notag\\
&\leq \frac{1}{2}\Vert (u,v)-(\tilde{u},\tilde{v})\Vert_{\mathcal{X}(T)}\label{keyyy2}
\end{align}
as long as $\Vert(u_0,u_1)\Vert_{D^\sigma}+\Vert(v_0,v_1)\Vert_{D^\sigma}\leq\min\big\{
\frac{1}{8C^p3^{p-1}},\frac{1}{8C^q3^{q-1}}\big\}$.

Therefore, if
\begin{equation*}
\resizebox{0.95\hsize}{!}{$
\Vert(u_0,u_1)\Vert_{D^\sigma}+\Vert(v_0,v_1)\Vert_{D^\sigma}\leq\varepsilon:= \min\big\{\bigl(\frac{1}{3^pC^p}\bigr)^{\frac{1}{p-1}}, \bigl(\frac{1}{3^qC^q}\bigr)^{\frac{1}{q-1}},\frac{1}{8C^p3^{p-1}},\frac{1}{8C^q3^{q-1}} \big\}$},
\end{equation*}
from (\ref{keyyy1}) and (\ref{keyyy2}), we see that  $\mathcal{N}$ is a contraction mapping from$X(T,K)$ into itself. According to the  contraction mapping principle, there exists a unique $(u,v)\in X(T,K)$ such that $\mathcal{N}(u,v)=(u,v)$, i.e., $(u,v)$ is the unique solution to (\ref{eqs}). Furthermore, due to the constant $C$ does not depend on the choice of $T$,  $(u,v)$ is  actually  the global solution. The choice of $K$ gives that the solution $(u,v)$ satisfies $\Vert(u,v)\Vert_{\mathcal{X}(t)}\leq 3C\big(\Vert(u_0,u_1)\Vert_{D^\sigma}+\Vert(v_0,v_1)\Vert_{D^\sigma}\big)$ for any $t>1$, which yields the validness of the estimates shown in Theorem \ref{theorem11}.
\end{proof}

\subsubsection{The proof of (\ref{key1})}\label{proofkey1}
By the definition (\ref{operator}) of $\mathcal{N},$ we  now  provide the estimate for ($u^l,v^l$). In the case of $\delta_1,\delta_2>(m+1)^2(n+2\sigma-1)^2$ with $\sigma>1$, by Proposition \ref{linearestimate}, we have
\begin{equation}\label{ull2hsigmaetc}
\begin{aligned}
&\Vert u^l(t,\cdot)\Vert_{L^2}\lesssim t^{-(m+1)\frac{n}{2}+\frac{\sqrt{\delta_1}-\mu_1+1}{2}}\Vert (u_0,u_1)\Vert_{L^2},\\
&\Vert u^l(t,\cdot)\Vert_{\dot{H}^\sigma}\lesssim t^{-(m+1)(\sigma+\frac{n}{2})+\frac{\sqrt{\delta_1}-\mu_1+1}{2}}\Vert (u_0,u_1)\Vert_{D^\sigma},\\
&\Vert \partial_tu^l(t,\cdot)\Vert_{L^2}\lesssim t^{m-(m+1)(1+\frac{n}{2})+\frac{\sqrt{\delta_1}-\mu_1+1}{2}}\Vert (u_0,u_1)\Vert_{D^1},\\
&\Vert \partial_tu^l(t,\cdot)\Vert_{\dot{H}^{\sigma-1}}\lesssim t^{m-(m+1)(\sigma+\frac{n}{2})+\frac{\sqrt{\delta_1}-\mu_1+1}{2}}\Vert (u_0,u_1)\Vert_{D^\sigma}
\end{aligned}
\end{equation}
and
\begin{equation}\label{vll2hsigmaetc}
\begin{aligned}
&\Vert v^l(t,\cdot)\Vert_{L^2}\lesssim t^{-(m+1)\frac{n}{2}+\frac{\sqrt{\delta_2}-\mu_2+1}{2}}\Vert (v_0,v_1)\Vert_{L^2},\\
&\Vert v^l(t,\cdot)\Vert_{\dot{H}^\sigma}\lesssim t^{-(m+1)(\sigma+\frac{n}{2})+\frac{\sqrt{\delta_2}-\mu_2+1}{2}}\Vert (v_0,v_1)\Vert_{D^\sigma},\\
&\Vert \partial_tv^l(t,\cdot)\Vert_{L^2}\lesssim t^{m-(m+1)(1+\frac{n}{2})+\frac{\sqrt{\delta_2}-\mu_2+1}{2}}\Vert (v_0,v_1)\Vert_{D^1},\\
&\Vert \partial_tv^l(t,\cdot)\Vert_{\dot{H}^{\sigma-1}}\lesssim t^{m-(m+1)(\sigma+\frac{n}{2})+\frac{\sqrt{\delta_2}-\mu_2+1}{2}}\Vert (v_0,v_1)\Vert_{D^\sigma}.
\end{aligned}
\end{equation}
By the definitions (\ref{M1}), (\ref{M2}) of $M_1(t,u^l)$, $M_2(t,v^l)$, we get
\begin{equation*}
M_1(t,u^l)+M_2(t,v^l)\lesssim \Vert (u_0,u_1)\Vert_{D^\sigma}+\Vert (v_0,v_1)\Vert_{D^\sigma},
\end{equation*}
which yields $\Vert(u^l,v^l)\Vert_{\mathcal{X}(T)}\lesssim \Vert (u_0,u_1)\Vert_{D^\sigma}+\Vert (v_0,v_1)\Vert_{D^\sigma}$ by (\ref{normxt}).

In the same way, we can deal with the remaining cases
\begin{align}
&\text{Case 2:}\ \delta_1>\bigl((m+1)(n+2\sigma-1)\bigr)^2,\ \delta_2=\bigl((m+1)(n+2\sigma-1)\bigr)^2,\ \sigma>1, \label{bigcase2}\\
&\text{Case 3:}\ \delta_1=\bigl((m+1)(n+2\sigma-1)\bigr)^2, \ \delta_2>\bigl((m+1)(n+2\sigma-1)\bigr)^2,\ \sigma>1, \label{bigcase3}\\
&\text{Case 4:}\ \delta_1=\bigl((m+1)(n+2\sigma-1)\bigr)^2, \ \delta_2=\bigl((m+1)(n+2\sigma-1)\bigr)^2,\ \sigma>1, \label{bigcase4}\\
&\text{Case 5:}\ \delta_1>\bigl((m+1)(n+2\sigma-1)\bigr)^2,\ \delta_2>\bigl((m+1)(n+2\sigma-1)\bigr)^2,\ \sigma=1,  \label{bigcase5}\\
&\text{Case 6:}\ \delta_1>\bigl((m+1)(n+2\sigma-1)\bigr)^2, \ \delta_2=\bigl((m+1)(n+2\sigma-1)\bigr)^2,\ \sigma=1, \label{bigcase6}\\
&\text{Case 7:}\ \delta_1=\bigl((m+1)(n+2\sigma-1)\bigr)^2, \ \delta_2>\bigl((m+1)(n+2\sigma-1)\bigr)^2,\ \sigma=1, \label{bigcase7}\\
&\text{Case 8:}\ \delta_1=\bigl((m+1)(n+2\sigma-1)\bigr)^2, \ \delta_2=\bigl((m+1)(n+2\sigma-1)\bigr)^2,\ \sigma=1. \label{bigcase8}
\end{align}

Combining the above eight cases gives the estimates of $(u^l,v^l)$ as
\begin{equation}\label{ulvlestimate}
\Vert(u^l,v^l)\Vert_{\mathcal{X}(T)}\lesssim \Vert (u_0,u_1)\Vert_{D^\sigma}+\Vert (v_0,v_1)\Vert_{D^\sigma}.
\end{equation}

In what follows, we will use Proposition \ref{unlinearestimate} and Duhamel's principle to establish the estimates for the nonlinear term, namely,
\begin{equation}\label{nones}
\Vert\big(H_1(v),H_2(u)\big)\Vert_{\mathcal{X}(T)}\lesssim \Vert (u,v)\Vert_{\mathcal{X}{(T)}}^p+\Vert (u,v)\Vert_{\mathcal{X}{(T)}}^q.
\end{equation}
From (\ref{ulvlestimate}) and (\ref{nones}), we have essentially  proved (\ref{key1}).

Before proving (\ref{nones}), let us  list some primary results  that will be used throughout the whole proof.

\begin{lemma}[\cite{LiGuo2025}]\label{coro4.3}
For  a function $w\in\mathcal{X}(T)$, $T>1$ and $1\leq\tau< t\leq T$, the following estimates can be derived

(1)
\begin{align}
&\Vert \vert w(\tau,\cdot)\vert^s\Vert_{\dot{H}^{\sigma-1}}\lesssim \Vert w(\tau,\cdot)\Vert_{L^{r_1}}^{s-1}\Vert\vert D\vert^{\sigma-1}w(\tau,\cdot)\Vert_{L^{r_2}},\ \sigma>1\ \text{and}\ s>\lceil\sigma-1\rceil\ (\geq1),\label{corol2}\\
&\Vert\vert w(\tau,\cdot)\vert^s\Vert_{\dot{H}^{\sigma-2}}\lesssim\Vert w(\tau,\cdot)\Vert_{L^{r_3}}^{s-1}\Vert\vert D\vert^{\sigma-2}w(\tau,\cdot)\Vert_{L^{r_4}},\ \sigma>2\  \text{and}\ s>\lceil\sigma-2\rceil\ (\geq1),\label{corol3}
\end{align}
where  $\frac{s-1}{r_1}+\frac{1}{r_2}=\frac{s-1}{r_3}+\frac{1}{r_4}=\frac{1}{2}$.

(2) For any $r\geq1$, it holds that
\begin{align}
&\Vert w(\tau,\cdot)\Vert_{L^r}\lesssim \Vert w(\tau,\cdot)\Vert_{\dot{H}^\sigma}^{\theta_1(r)}\Vert w(\tau,\cdot)\Vert_{L^2}^{1-\theta_1(r)},\label{corol1}\\
&\Vert \vert D\vert^{\sigma-1} w(\tau,\cdot)\Vert_{L^{r}}\lesssim\Vert w(\tau,\cdot)\Vert_{\dot{H}^\sigma}^{\theta_2(r)}\Vert w(\tau,\cdot)\Vert_{L^2}^{1-\theta_2(r)}\ \text{for} \ \sigma>1,\label{corol22}\\
&\Vert\vert D\vert^{\sigma-2}w(\tau,\cdot)\Vert_{L^{r}}\lesssim\Vert w(\tau,\cdot)\Vert_{\dot{H}^\sigma}^{\theta_3(r)}\Vert w(\tau,\cdot)\Vert_{L^2}^{1-\theta_3(r)}\ \text{for} \ \sigma>2\label{corol23},
\end{align}
where
\begin{align}
&\theta_1(r)=(\frac{1}{2}-\frac{1}{r})\frac{n}{\sigma}\in[0,1],\label{theta1}\\
&\theta_2(r)=\bigl(\frac{1}{2}-\frac{1}{r}+\frac{\sigma-1}{n}\bigr)\frac{n}{\sigma}\in[\frac{\sigma-1}{\sigma},1],\label{theta2}\\
&\theta_3(r)=(\frac{1}{2}-\frac{1}{r}+\frac{\sigma-2}{n})\frac{n}{\sigma}\in[\frac{\sigma-2}{\sigma},1].\label{theta3}
\end{align}
\end{lemma}
\begin{remark}
A detailed proof of Lemma \ref{coro4.3} was given in Corollary 3.3  \cite{LiGuo2025}, and therefore it will not be repeated here.
\end{remark}
\begin{lemma}\label{foures}
In the case of $\delta_1,\delta_2\geq(m+1)^2(n+2\sigma-1)^2$ with $\sigma\geq1$, $p,q>1$ , for any $1\leq\tau\leq t\leq T$ and any $(u,v),(\tilde{u},\tilde{v})\in \mathcal{X}(T)$, we have
\begin{align}
&\Vert \vert v(\tau,\cdot)\vert^p\Vert_{L^1}\lesssim \tau^{(m+1)n+(-(m+1)n-\beta_2+1)p}\ell_2(\tau)^{p\theta_1(2p)}M_2(\tau,v)^p,\label{vpl1}\\
&\Vert \vert v(\tau,\cdot)\vert^p\Vert_{L^2}\lesssim\tau^{(m+1)\frac{n}{2}+(-(m+1)n-\beta_2+1)p}\ell_2
(\tau)^{p\theta_1(2p)}M_2(\tau,v)^p,\label{vpl2}\\
&\Vert \vert v(\tau,\cdot)\vert^p\Vert_{\dot{H}^{\sigma-1}}\lesssim \tau^{(m+1)(\frac{n}{2}-\sigma+1)+(-(m+1)n-\beta_2+1)p}\ell_2
(\tau)^{\theta_1(r_1)(p-1)+\theta_2(r_2)}M_2(\tau,v)^{p},\label{vph-1}
\end{align}
and if $\sigma>2$,
\begin{align}
&\Vert \vert v(\tau,\cdot)\vert^p\Vert_{\dot{H}^{\sigma-2}}\lesssim \tau^{(m+1)(\frac{n}{2}-\sigma+2)+(-(m+1)n-\beta_2+1)p}\ell_2
(\tau)^{\theta_1(r_3)(p-1)+\theta_3(r_4)}M_2(\tau,v)^{p},\label{vph-2}
\end{align}
where
$\frac{p-1}{r_1}+\frac{1}{r_2}=\frac{p-1}{r_3}+\frac{1}{r_4}=\frac{1}{2}$, $\beta_2$ is defined by (\ref{E}).

As for $u(\tau,x)$, we have
\begin{align}
&\Vert \vert u(\tau,\cdot)\vert^q\Vert_{L^1}\lesssim \tau^{(m+1)n+(-(m+1)n-\beta_1+1)q}\ell_1(\tau)^{q\theta_1(2q)}M_1(\tau,u)^q,\label{uql11}\\
&\Vert \vert u(\tau,\cdot)\vert^q\Vert_{L^2}\lesssim\tau^{(m+1)\frac{n}{2}+(-(m+1)n-\beta_1+1)q}\ell_1
(\tau)^{q\theta_1(2q)}M_1(\tau,u)^q,\label{uql2}\\
&\Vert \vert u(\tau,\cdot)\vert^q\Vert_{\dot{H}^{\sigma-1}}\lesssim \tau^{(m+1)(\frac{n}{2}-\sigma+1)+(-(m+1)n-\beta_1+1)q}\ell_1
(\tau)^{\theta_1(r_5)(q-1)+\theta_2(r_6)}M_1(\tau,u)^{q},\label{uqh-1}
\end{align}
and if $\sigma>2$,
\begin{align}
&\Vert \vert u(\tau,\cdot)\vert^q\Vert_{\dot{H}^{\sigma-2}}\lesssim \tau^{(m+1)(\frac{n}{2}-\sigma+2)+(-(m+1)n-\beta_1+1)q}\ell_1
(\tau)^{\theta_1(r_7)(q-1)+\theta_3(r_8)}M_2(\tau,u)^{q}.\label{uqh-2}
\end{align}
where
$\frac{q-1}{r_5}+\frac{1}{r_6}=\frac{q-1}{r_7}+\frac{1}{r_8}=\frac{1}{2}$, $\beta_1$ is defined by (\ref{E}).
\end{lemma}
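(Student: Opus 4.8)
The plan is to reduce every bound for the nonlinearities to the two elementary quantities $\Vert v(\tau,\cdot)\Vert_{L^2}$ and $\Vert v(\tau,\cdot)\Vert_{\dot{H}^\sigma}$ (respectively $\Vert u(\tau,\cdot)\Vert_{L^2}$ and $\Vert u(\tau,\cdot)\Vert_{\dot{H}^\sigma}$), which are directly controlled by the weighted functional $M_2(\tau,v)$ (respectively $M_1(\tau,u)$). First I would record the algebraic identity $\frac{\sqrt{\delta_2}-\mu_2+1}{2}=1-\beta_2$, which follows at once from the definition $\beta_2=\frac{\mu_2+1-\sqrt{\delta_2}}{2}$ in (\ref{E}); with it, the definition (\ref{M2}) of $M_2$ yields $\Vert v(\tau,\cdot)\Vert_{L^2}\lesssim \tau^{-(m+1)\frac n2-\beta_2+1}M_2(\tau,v)$ and $\Vert v(\tau,\cdot)\Vert_{\dot{H}^\sigma}\lesssim \tau^{-(m+1)(\sigma+\frac n2)-\beta_2+1}\ell_2(\tau)M_2(\tau,v)$, and analogously for $u$ via $M_1$. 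Note that only the $\dot{H}^\sigma$ bound carries the logarithmic factor $\ell_2$; this is the sole source of the $\ell_i$ powers appearing in the conclusions.

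For the $L^2$ bound (\ref{vpl2}) I would write $\Vert |v(\tau,\cdot)|^p\Vert_{L^2}=\Vert v(\tau,\cdot)\Vert_{L^{2p}}^p$ and apply the Gagliardo--Nirenberg inequality (\ref{corol1}) with $r=2p$, so that $\Vert v\Vert_{L^{2p}}\lesssim \Vert v\Vert_{\dot{H}^\sigma}^{\theta_1(2p)}\Vert v\Vert_{L^2}^{1-\theta_1(2p)}$; substituting the two elementary bounds and using $\theta_1(2p)=\frac{n}{2\sigma}\frac{p-1}{p}$ from (\ref{theta1}), a short bookkeeping of the $\tau$-exponent reproduces exactly (\ref{vpl2}), with the $\ell_2$ power $p\theta_1(2p)$. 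The admissibility $\theta_1(2p)\in[0,1]$ is guaranteed by the hypothesis $p\le 1+\frac{2}{n-2\sigma}$ (when $n>2\sigma$) inherited from Theorems \ref{theorem11}--\ref{theorem13}, which for $\sigma\ge1$ is stronger than the bound $p\le\frac{n}{n-2\sigma}$ actually needed here. The $L^1$ bound (\ref{vpl1}) then costs nothing extra: by the finite--propagation support property (\ref{compact2}) the function $v(\tau,\cdot)$ is supported in a ball of radius $\lesssim\tau^{m+1}$, so Hölder's inequality gives $\Vert |v|^p\Vert_{L^1}\lesssim |B_{c\tau^{m+1}}|^{1/2}\Vert |v|^p\Vert_{L^2}\lesssim \tau^{(m+1)n/2}\Vert |v|^p\Vert_{L^2}$, which supplies precisely the extra factor $\tau^{(m+1)n/2}$ distinguishing (\ref{vpl1}) from (\ref{vpl2}).

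The Sobolev bounds are handled by the fractional Leibniz--type estimates of Lemma \ref{coro4.3}. For (\ref{vph-1}) I would apply (\ref{corol2}) with $s=p$ to get $\Vert |v|^p\Vert_{\dot{H}^{\sigma-1}}\lesssim \Vert v\Vert_{L^{r_1}}^{p-1}\Vert |D|^{\sigma-1}v\Vert_{L^{r_2}}$ with $\frac{p-1}{r_1}+\frac1{r_2}=\frac12$, and then interpolate each factor via (\ref{corol1}) and (\ref{corol22}). Collecting the $\dot{H}^\sigma$ exponents and using the constraint together with (\ref{theta1})--(\ref{theta2}), the total $\dot{H}^\sigma$ power simplifies to $\Theta=\theta_1(r_1)(p-1)+\theta_2(r_2)=\frac{n(p-1)}{2\sigma}+\frac{\sigma-1}{\sigma}$, whence the $\tau$-exponent collapses to $(m+1)(\frac n2-\sigma+1)+(-(m+1)n-\beta_2+1)p$ and the $\ell_2$ power is exactly $\Theta$, matching (\ref{vph-1}). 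The case $\sigma>2$, i.e.\ (\ref{vph-2}), is identical with (\ref{corol3}), (\ref{corol1}), (\ref{corol23}) in place of (\ref{corol2}), (\ref{corol1}), (\ref{corol22}) and $\sigma-1$ replaced by $\sigma-2$. Finally, every estimate for $|u|^q$ follows verbatim under the symmetric substitution $(v,p,\beta_2,M_2,\ell_2,\delta_2)\mapsto(u,q,\beta_1,M_1,\ell_1,\delta_1)$.

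The main obstacle is bookkeeping rather than ideas: one must verify that every intermediate Lebesgue exponent $r_1,\dots,r_8$ can be chosen admissibly for both the Leibniz and the Gagliardo--Nirenberg steps, that the interpolation indices $\theta_1,\theta_2,\theta_3$ indeed fall in the ranges prescribed by (\ref{theta1})--(\ref{theta3}) under the hypotheses on $p,q$ and $\sigma$, and --- most delicately --- that the logarithmic factors $\ell_i(\tau)$ are propagated with the correct exponents, since they attach only to the $\dot{H}^\sigma$ norm and must be raised to the appropriate interpolation power in each factor. Keeping the $\tau$-exponent and the $\ell_i$-exponent synchronized through the two successive interpolations is where the computation demands the most care.
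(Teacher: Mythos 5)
Your proposal is correct and follows essentially the same route as the paper's own proof: Gagliardo--Nirenberg interpolation (\ref{corol1}) for the $L^{2p}$ norm, H\"older together with the finite-propagation support property for the $L^1$ bound, the fractional Leibniz estimates (\ref{corol2})--(\ref{corol3}) combined with (\ref{corol1}), (\ref{corol22}), (\ref{corol23}) for the Sobolev norms, and the symmetric substitution for the $u$-estimates, with the exponent arithmetic (via $\frac{\sqrt{\delta_2}-\mu_2+1}{2}=1-\beta_2$) matching the paper's computation. The only cosmetic differences are that you make this identity and the simplified interpolation exponents explicit, and that the admissibility checks you flag are deferred by the paper to Remark \ref{reasonablness}.
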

\begin{remark}\label{reasonablness}
We point out that the numbers $r_i (i=1,\cdots,8)$ in Lemma \ref{foures} are well-defined under the assumptions of Theorem \ref{theorem11}. About $p, r_1, r_2$, we need to ensure that
\begin{equation*}
\theta_1(2p)=(\frac{1}{2}-\frac{1}{2p})\frac{n}{\sigma}\in [0,1]\ \ \
\text{and} \ \
\left\{
\begin{aligned}
&\theta_1(r_1)=(\frac{1}{2}-\frac{1}{r_1})\frac{n}{\sigma}\in [0,1],\\
&\theta_2(r_2)=(\frac{1}{2}-\frac{1}{r_2}+\frac{\sigma-1}{n})\frac{n}{\sigma}\in [\frac{\sigma-1}{\sigma},1],
\end{aligned}
\right.
\end{equation*}
which are equal to \begin{equation}\label{condi22}
\left\{
\begin{aligned}
&p\geq1,\ &n\leq2\sigma,\\
&1\leq p\leq\frac{n}{n-2\sigma},\ & n>2\sigma,
\end{aligned}
\right.
\end{equation}

\begin{equation}\label{r1r2}
\left\{
\begin{aligned}
&\frac{1}{r_1}\in (0,\frac{1}{2}],\ &n\leq2\sigma,\\
&\frac{1}{r_1}\in[\frac{n-2\sigma}{2n},\frac{1}{2}],\ & n>2\sigma,
\end{aligned}
\right.
\quad\text{and}\
\left\{
\begin{aligned}
&\frac{1}{r_2}\in (0,\frac{1}{2}],\ & n\leq2,\\
&\frac{1}{r_2}\in[\frac{n-2}{2n},\frac{1}{2}],\ & n>2.
\end{aligned}
\right.
\end{equation}
Clearly, (\ref{condi22}) holds  under the assumptions regarding
$p$ in  Theorem \ref{theorem11}. Since  $\frac{p-1}{r_1}+\frac{1}{r_2}=\frac{1}{2}$  and $\sigma\geq1$ in (\ref{vph-1}), in order to ensure the validity of (\ref{r1r2}), it suffices to guarantee
\begin{equation*}
\frac{1}{2}\in
\left
\{
\begin{aligned}
&(0,\frac{p}{2}],\ &\quad& n\leq2,\\
&\big(\frac{1}{2}-\frac{1}{n},\frac{p}{2}\big],\ &\quad& 2<n\leq2\sigma,\\
&\big[\frac{(n-2\sigma)(p-1)+n-2}{2n},\frac{p}{2}\big], \ &\quad& n>2\sigma,
\end{aligned}
\right.
\end{equation*}
the first two cases are automatically satisfied for $p\geq1$,  and  the condition $p\leq 1+\frac{2}{n-2\sigma}\ (n>2\sigma)$ in Theorem \ref{theorem11} gives the validity of the third case. The existence of $r_5, r_6$ can be obtained through the same derivation process.

As for $r_3,r_4$, we need to ensure that $\theta_1(r_3)\in[0,1], \theta_3(r_4)\in[0,1]$,  which are equivalent to ensuring
\begin{equation}\label{r3r4}
\left\{
\begin{aligned}
&\frac{1}{r_3}\in (0,\frac{1}{2}],\ &n\leq2\sigma,\\
&\frac{1}{r_3}\in[\frac{n-2\sigma}{2n},\frac{1}{2}],\ & n>2\sigma,
\end{aligned}
\right.
\quad\text{and}\
\left\{
\begin{aligned}
&\frac{1}{r_4}\in (0,\frac{1}{2}],\ & n\leq4,\\
&\frac{1}{r_4}\in[\frac{n-4}{2n},\frac{1}{2}],\ & n>4.
\end{aligned}
\right.
\end{equation}
Note that $\sigma\geq2$ in (\ref{vph-2}) and $\frac{p-1}{r_3}+\frac{1}{r_4}=\frac{1}{2}$, it suffices to verify that
\begin{equation*}
\frac{1}{2}\in
\left
\{
\begin{aligned}
&(0,\frac{p}{2}],\ &\quad& n\leq4,\\
&\big(\frac{1}{2}-\frac{2}{n},\frac{p}{2}\big],\ &\quad& 4<n\leq2\sigma,\\
&\big[\frac{(n-2\sigma)(p-1)+n-4}{2n},\frac{p}{2}\big], \ &\quad& n>2\sigma.
\end{aligned}
\right.
\end{equation*}
Hence,   it is sufficient to guarantee $p\leq1+\frac{4}{n-2\sigma}$ (when $n>2\sigma$). Obviously, the condition $p\leq 1+\frac{2}{n-2\sigma}\ (n>2\sigma)$ in Theorem \ref{theorem11} can ensure this. The method of demonstrating the existence of $r_7, r_8$  is the same as that for $r_3, r_4$.

\end{remark}

\begin{proof}[Proof of Lemma \ref{foures}]
By (\ref{corol1}) in Lemma \ref{coro4.3} and the definition (\ref{M2}) of $M_2(\tau,v)$, we  obtain
\begin{align}
&\Vert \vert v(\tau,\cdot)\vert^p\Vert_{L^2}=\Vert v(\tau,\cdot)\Vert_{L^{2p}}^p\lesssim \Vert v(\tau,\cdot)\Vert_{\dot{H}^{\sigma}}^{p\theta_1(2p)}\Vert v(\tau,\cdot)\Vert_{\dot{L}^{2}}^{p(1-\theta_1(2p))}\notag\\
&\lesssim \big(\tau^{-(m+1)(\sigma+\frac{n}{2})+\frac{\sqrt{\delta_2}-\mu_2+1}{2}}\ell_2(\tau)
M_2(\tau,v)\big)^{p\theta_1(2p)}\big(\tau^{-(m+1)\frac{n}{2}+\frac{\sqrt{\delta_2}
-\mu_2+1}{2}}M_2(\tau,v)\big)^{p(1-\theta_1(2p))}\notag\\
&\quad=\tau^{\big(-(m+1)\frac{n}{2}+\frac{\sqrt{\delta_2}-\mu_2+1}{2}\big)p-(m+1)\sigma p\theta_1(2p)}\ell_2(\tau)^{p\theta_1(2p)}M_2(\tau,v)^p\notag\\
&\quad=\tau^{\frac{(m+1)n}{2}+(-(m+1)n-\beta_2+1)p}\ell_2(\tau)^{p\theta_1(2p)}M_2(\tau,v)^p.\notag
\end{align}
It follows from H$\ddot{\text{o}}$lder's inequality that
\begin{align}
&\Vert \vert v(\tau,\cdot)\vert^p\Vert_{L^1}\leq\Vert \vert v(\tau,\cdot)\vert^p\Vert_{L^{2}}\big(\int_{\vert x\vert\leq \phi_m(\tau)-\phi_m(1)+M}dx\big)^{\frac{1}{2}}\lesssim \tau^{\frac{(m+1)n}{2}}\Vert \vert v(\tau,\cdot)\vert\Vert_{L^{2}}\notag\\
&\lesssim\tau^{(m+1)n+(-(m+1)n-\beta_2+1)p}\ell_2
(\tau)^{p\theta_1(2p)}M_2(\tau,v)^p.\notag
\end{align}
By (\ref{corol2}), (\ref{corol1})-(\ref{corol22}) in Lemma \ref{coro4.3}, we have 
\begin{align}
&\Vert \vert v(\tau,\cdot)\vert^p\Vert_{\dot{H}^{\sigma-1}}\lesssim\Vert  v(\tau,\cdot)\Vert_{L^{r_1}}^{p-1}\Vert\vert D\vert^{\sigma-1}v(\tau,\cdot)\Vert_{L^{r_2}}\notag\\
&\lesssim \Vert  v(\tau,\cdot)\Vert_{\dot{H}^{\sigma}}^{\theta_1(r_1)(p-1)+\theta_2(r_2)}\Vert  v(\tau,\cdot)\Vert_{L^{2}}^{(1-\theta_1(r_1))(p-1)+1-\theta_2(r_2)}\notag\\
&\lesssim \tau^{\big(-(m+1)\frac{n}{2}+\frac{\sqrt{\delta_2}-\mu_2+1}{2}\big)p-(m+1)\sigma(\theta_1(r_1)(p-1)+\theta_2(r_2))}\ell_2
(\tau)^{\theta_1(r_1)(p-1)+\theta_2(r_2))}M_2(\tau,v)^{p}\notag\\
&=\tau^{(-(m+1)n-\beta_2+1)p+(m+1)(\frac{n}{2}-\sigma+1)}\ell_2
(\tau)^{\theta_1(r_1)(p-1)+\theta_2(r_2))}M_2(\tau,v)^{p};\notag
\end{align}
while if $\sigma>2$, using (\ref{M2}), (\ref{corol3})-(\ref{corol1}) and (\ref{corol23}), we immediately obtain (\ref{vph-2}). 

Using the same argument, (\ref{uql11})-(\ref{uqh-2}) can be derived from  Lemma \ref{coro4.3} and the definition (\ref{M1}) of $M_1(\tau,u)$, so we omit the details.
\end{proof}

With the preliminary results of Lemma \ref{foures}, we now proceed to prove (\ref{nones}).
\begin{proof}[Proof of (\ref{nones})]

Case 1: $\delta_1,\delta_2>\big((m+1)(n+2\sigma-1)\big)^2$ with $\sigma>1$. The condition  $p>\tilde{p}$ yields
\begin{equation}\label{integralexponent1}
\begin{aligned}
(m+1)n+\beta_1+(-(m+1)n-\beta_2+1)p<-1,
\end{aligned}
\end{equation}
then
\begin{equation}\label{infite1}
\begin{aligned}
&\int_1^t\tau^{(m+1)n+\beta_1+(-(m+1)n-\beta_2+1)p}d\tau\leq\int_1^\infty\tau^{(m+1)n+\beta_1
+(-(m+1)n-\beta_2+1)p}d\tau<\infty.
\end{aligned}
\end{equation}
By (\ref{u0=0u}) in Proposition \ref{unlinearestimate}, (\ref{vpl1})-(\ref{vpl2}), (\ref{infite1}) and the Duhamel's principle,  we have
\begin{equation}\label{esl2}
\begin{aligned}
&\Vert H_1(v)(t,\cdot)\Vert_{L^2}\lesssim \int_1^t\Vert E_1^{\mu_1,\nu_1}(t,\tau,\cdot)\ast\vert v(\tau,\cdot)\vert^p\Vert_{L^2}d\tau\\
&\lesssim t^{-(m+1)\frac{n}{2}+\frac{\sqrt{\delta_1}-\mu_1+1}{2}}\int_1^t\tau^
{-\frac{\sqrt{\delta_1}-\mu_1-1}{2}}\big(\Vert \vert v(\tau,\cdot)\vert^p\Vert_{L^1}+\tau^{\frac{(m+1)n}{2}}\Vert \vert v(\tau,\cdot)\vert^p\Vert_{L^2}\big)d\tau\\
&\lesssim t^{-(m+1)\frac{n}{2}+\frac{\sqrt{\delta_1}-\mu_1+1}{2}}\int_1^t\tau^
{(m+1)n+\beta_1+(-(m+1)n-\beta_2+1)p}M_2(\tau,v)^pd\tau\\
&\lesssim t^{-(m+1)\frac{n}{2}+\frac{\sqrt{\delta_1}-\mu_1+1}{2}}\Vert (u,v)\Vert_{\mathcal{X}(t)}^p\int_1^t\tau^
{(m+1)n+\beta_1+(-(m+1)n-\beta_2+1)p}d\tau\\
&\lesssim t^{-(m+1)\frac{n}{2}+\frac{\sqrt{\delta_1}-\mu_1+1}{2}}\Vert (u,v)\Vert_{\mathcal{X}(t)}^p.
\end{aligned}
\end{equation}
Using (\ref{u0=0ut}) and the same argument  as in deriving (\ref{esl2}), we get
\begin{equation}\label{partialtesl2}
\Vert\partial_tH_1(v)(t,\cdot)\Vert_{L^2}\lesssim t^{m-(m+1)(1+\frac{n}{2})+\frac{\sqrt{\delta_1}-\mu_1+1}{2}}\Vert (u,v)\Vert_{\mathcal{X}(t)}^p.
\end{equation}
Applying (\ref{u0=0u}), (\ref{vpl1})-(\ref{vph-1}) yields
\begin{align}
&\Vert H_1(v)(t,\cdot)\Vert_{\dot{H}^{\sigma}}\lesssim \int_1^t\Vert E_1^{\mu_1,\nu_1}(t,\tau,\cdot)\ast\vert v(t,\cdot)\vert^p\Vert_{\dot{H}^{\sigma}}d\tau\notag\\
&\lesssim t^{-(m+1)(\sigma+\frac{n}{2})+\frac{\sqrt{\delta_1}-\mu_1-1}{2}}
\int_1^t\tau^{-\frac{\sqrt{\delta_1}-\mu_1-1}{2}}\big(\Vert\vert v(\tau,\cdot)\vert^p\Vert_{L^1}+\tau^{(m+1)(\frac{n}{2}+\sigma-1)}\Vert\vert v(\tau,\cdot)\vert^p\Vert_{\dot{H}^{\sigma-1}}\big)d\tau\notag\\
&\lesssim t^{-(m+1)(\sigma+\frac{n}{2})+\frac{\sqrt{\delta_1}-\mu_1-1}{2}}\int_1^t\tau^
{(m+1)n+\beta_1+(-(m+1)n-\beta_2+1)p}M_2(\tau,v)^pd\tau\notag\\
&\lesssim t^{-(m+1)(\sigma+\frac{n}{2})+\frac{\sqrt{\delta_1}-\mu_1-1}{2}}\Vert (u,v)\Vert_{\mathcal{X}(t)}^p.\label{h1vhsigma}
\end{align}
To estimate $\Vert\partial_tH_1(v)(t,\cdot)\Vert_{\dot{H}^{\sigma-1}}$, if $\sigma>2$, then by (\ref{u0=0ut}), (\ref{vpl1})-(\ref{vph-2}), we have
\begin{align}
&\Vert\partial_tH_1(v)(t,\cdot)\Vert_{\dot{H}^{\sigma-1}}\lesssim t^{m-(m+1)(\sigma+\frac{n}{2})+\frac{\sqrt{\delta_1}-\mu_1-1}{2}}\int_1^t\tau^{-\frac
{\sqrt{\delta_1}-\mu_1-1}{2}}\big(\Vert\vert v(\tau,\cdot)\vert^p\Vert_{L^1}\notag\\
&\quad\quad\quad\quad+\tau^{(m+1)(\frac{n}{2}+\sigma-1)}\Vert\vert v(\tau,\cdot)\vert^p\Vert_{\dot{H}^{\sigma-1}}+\tau^{(m+1)(\frac{n}{2}+\sigma-2)}\Vert\vert v(\tau,\cdot)\vert^p\Vert_{\dot{H}^{\sigma-2}}\big)d\tau\notag\\
&\lesssim t^{m-(m+1)(\sigma+\frac{n}{2})+\frac{\sqrt{\delta_1}-\mu_1-1}{2}}\int_1^t\tau^
{(m+1)n+\beta_1+(-(m+1)n-\beta_2+1)p}M_2(\tau,v)^pd\tau\notag\\
&\lesssim t^{m-(m+1)(\sigma+\frac{n}{2})+\frac{\sqrt{\delta_1}-\mu_1-1}{2}}\Vert (u,v)\Vert_{\mathcal{X}(t)}^p;\label{partialtuh-11}
\end{align}
while for $1<\sigma\leq 2$, it follows from  (\ref{u0=0ut}), (\ref{vpl1})- (\ref{vph-1}) that
\begin{equation}\label{partialtuh-12}
\begin{aligned}
\Vert\partial_tH_1(v)(t,\cdot)\Vert_{\dot{H}^{\sigma-1}} \lesssim t^{m-(m+1)(\sigma+\frac{n}{2})+\frac{\sqrt{\delta_1}-\mu_1-1}{2}}\Vert (u,v)\Vert_{\mathcal{X}(t)}^p.
\end{aligned}
\end{equation}
Combining  (\ref{esl2})-(\ref{partialtuh-12}) and the definition (\ref{M1}) of $M_1(t,H_1(v))$, we get
\begin{equation}\label{m1h1v}
M_1(t,H_1(v))\lesssim \Vert (u,v)\Vert_{\mathcal{X}(t)}^p.
\end{equation}

Since $q>\tilde{q}$, it can be inferred  that
\begin{equation}\label{integralexponent2}
\begin{aligned}
(m+1)n+\beta_2+(-(m+1)n-\beta_1+1)q<-1,
\end{aligned}
\end{equation}
then
\begin{equation}\label{infite2}
\int_1^t\tau^{(m+1)n+\beta_2+(-(m+1)n-\beta_1+1)q}d\tau\leq\int_1^\infty\tau^{(m+1)n+\beta_2
+(-(m+1)n-\beta_1+1)p}d\tau<\infty.
\end{equation}
From (\ref{u0=0u}), (\ref{uql11})-(\ref{uql2}) and (\ref{infite2}), we conclude that
\begin{align*}
\resizebox{0.95\hsize}{!}{$
\begin{aligned}
&\Vert H_2(u)(t,\cdot)\Vert_{L^2}\lesssim t^{-(m+1)\frac{n}{2}+\frac{\sqrt{\delta_2}-\mu_2+1}{2}}\int_1^t\tau^{-\frac{\sqrt
{\delta_2}-\mu_2-1}{2}}\big(\Vert \vert u(\tau,\cdot)\vert^q\Vert_{L^1}+\tau^{(m+1)\frac{n}{2}}\Vert \vert u(\tau,\cdot)\vert^q\Vert_{L^2}\big)d\tau
\end{aligned}$}
\end{align*}
\begin{align}
&\lesssim t^{-(m+1)\frac{n}{2}+\frac{\sqrt{\delta_2}-\mu_2+1}{2}}\int_1^t\tau^{(m+1)n+\beta_2+
(-(m+1)n-\beta_1+1)q}M_1(\tau,u)^qd\tau\notag\\
&\lesssim t^{-(m+1)\frac{n}{2}+\frac{\sqrt{\delta_2}-\mu_2+1}{2}}\Vert (u,v)\Vert_{\mathcal{X}(t)}^q\int_1^t\tau^{(m+1)n+\beta_2+
(-(m+1)n-\beta_1+1)q}d\tau\notag\\
&\lesssim t^{-(m+1)\frac{n}{2}+\frac{\sqrt{\delta_2}-\mu_2+1}{2}}\Vert (u,v)\Vert_{\mathcal{X}(t)}^q.\label{h2ul2}
\end{align}
The estimates for $\Vert \partial_tH_2(u)(t,\cdot)\Vert_{L^2}$, $\Vert H_2(u)(t,\cdot)\Vert_{\dot{H}^{\sigma}}$, $\Vert \partial_tH_2(u)(t,\cdot)\Vert_{\dot{H}^{\sigma-1}}$ can be derived by using (\ref{u0=0u})-(\ref{u0=0ut}), (\ref{uql11})-(\ref{uqh-2}) and (\ref{infite2}), along with the same argument as applied in obtaining (\ref{partialtesl2})-(\ref{partialtuh-12}). Hence,  we present the results as follows and omit the details
\begin{align}
&\Vert\partial_tH_2(u)(t,\cdot)\Vert_{L^2}\lesssim t^{m-(m+1)(1+\frac{n}{2})+\frac{\sqrt{\delta_2}-\mu_2+1}{2}}\Vert (u,v)\Vert_{\mathcal{X}(t)}^q,\label{partialtvl2}\\
&\Vert H_2(u)(t,\cdot)\Vert_{\dot{H}^{\sigma}}\lesssim t^{-(m+1)(\sigma+\frac{n}{2})+\frac{\sqrt{\delta_2}-\mu_2+1}{2}}\Vert (u,v)\Vert_{\mathcal{X}(t)}^q,\label{vhsigma}\\
&\Vert \partial_tH_2(u)(t,\cdot)\Vert_{\dot{H}^{\sigma-1}}\lesssim
t^{m-(m+1)(\sigma+\frac{n}{2})+\frac{\sqrt{\delta_2}-\mu_2+1}{2}}\Vert (u,v)\Vert_{\mathcal{X}(t)}^q.\label{partialtvhsigma-1}
\end{align}
It follows (\ref{h2ul2})-(\ref{partialtvhsigma-1}) that
\begin{equation}\label{m2h2u}
M_2(t,H_2(u))\lesssim \Vert (u,v)\Vert_{\mathcal{X}(t)}^q.
\end{equation}
Therefore, we can immediately derive (\ref{nones}) by combining (\ref{m1h1v}) and (\ref{m2h2u}).

Case 2: $\delta_1>\bigl((m+1)(n+2\sigma-1)\bigr)^2,\ \delta_2=\bigl((m+1)(n+2\sigma-1)\bigr)^2,\ \sigma>1$.
Clearly, $\ell_1(\tau)=1$ and $\ell_2(\tau)=(1+\log \tau)^{\frac{1}{2}}$, so by Proposition \ref{unlinearestimate} and Lemma \ref{foures}, we have
\begin{align}
&\Vert H_1(v)(t,\cdot)\Vert_{L^2}\lesssim t^{-(m+1)\frac{n}{2}+\frac{\sqrt{\delta_1}-\mu_1+1}{2}}\notag\\
&\quad\quad\quad\quad \times\int_1^t\tau^{(m+1)n+\beta_1+
(-(m+1)n-\beta_2+1)p}(1+\log\tau)^{\frac{p\theta_1(2p)}{2}}M_2(\tau,u)^pd\tau,\label{case211}\\
&\Vert \partial_tH_1(v)(t,\cdot)\Vert_{L^2}\lesssim t^{m-(m+1)(1+\frac{n}{2})+\frac{\sqrt{\delta_1}-\mu_1+1}{2}}\notag\\
&\quad\quad\quad\quad\times\int_1^t\tau^{(m+1)n+\beta_1+
(-(m+1)n-\beta_2+1)p}(1+\log\tau)^{\frac{p\theta_1(2p)}{2}}M_2(\tau,v)^pd\tau,\label{case212}\\
&\Vert H_1(v)(t,\cdot)\Vert_{\dot{H}^\sigma}\lesssim t^{-(m+1)(\sigma+\frac{n}{2})+\frac{\sqrt{\delta_1}-\mu_1+1}{2}}\notag\\
&\ \ \times\int_1^t\tau^{(m+1)n+\beta_1+
(-(m+1)n-\beta_2+1)p}(1+\log\tau)^{\frac{p\theta_1(2p)+\theta_1(r_1)(p-1)
+\theta_2(r_2)}{2}}M_2(\tau,v)^pd\tau,\label{case213}
\end{align}
if $\sigma>2$,
\begin{equation}\label{case2141}
\begin{aligned}
&\Vert \partial_tH_1(v)(t,\cdot)\Vert_{\dot{H}^{\sigma-1}}\lesssim t^{m-(m+1)(\sigma+\frac{n}{2})+\frac{\sqrt{\delta_1}-\mu_1+1}{2}}\int_1^t\tau^{(m+1)n+\beta_1+
(-(m+1)n-\beta_2+1)p}\\
&\ \ \times(1+\log\tau)^{\frac{p\theta_1(p)+\theta_1(r_1)(p-1)+\theta_2(r_2)+\theta_1(r_3)(p-1)+\theta_3(r_4)}{2}}M_2(\tau,v)^pd\tau,
\end{aligned}
\end{equation}
while for $1<\sigma\leq 2$,
\begin{equation}\label{case2142}
\begin{aligned}
&\Vert \partial_tH_1(v)(t,\cdot)\Vert_{\dot{H}^{\sigma-1}}\lesssim t^{m-(m+1)(\sigma+\frac{n}{2})+\frac{\sqrt{\delta_1}-\mu_1+1}{2}}\int_1^t\tau^{(m+1)n+\beta_1+
(-(m+1)n-\beta_2+1)p}\\
&\quad\quad\quad\quad \quad\quad\quad\quad \times(1+\log\tau)^{\frac{p\theta_1(2p)+\theta_2(r_2)+\theta_1(r_1)(p-1)}{2}}
M_2(\tau,v)^pd\tau.
\end{aligned}
\end{equation}
Since $(m+1)n+\beta_1+(-(m+1)n-\beta_2+1)p<-1,$ we can set $(m+1)n+\beta_1+(-(m+1)n-\beta_2+1)p=-1-\gamma$. Then $\tau^{-\frac{\gamma}{2}}(1+\log{\tau})^{\frac{p\theta_1(2p)}{2}}\lesssim 1$ holds for large $\tau$, so
\begin{equation}\label{ingre}
\begin{aligned}
&\int_1^t\tau^{(m+1)n+\beta_1+
(-(m+1)n-\beta_2+1)p}(1+\log\tau)^{\frac{\theta_1(p)+\theta_1(2p)}{2}p}d\tau\lesssim
\int_1^\infty\tau^{-1-\frac{\gamma}{2}}d\tau<\infty.
\end{aligned}
\end{equation}
Thus from (\ref{case211}), we infer that
\begin{equation}\label{h1vl2}
\Vert H_1(v)(t,\cdot)\Vert_{L^2}\lesssim t^{-(m+1)\frac{n}{2}+\frac{\sqrt{\delta_1}-\mu_1+1}{2}}\Vert(u,v)\Vert_{\mathcal{X}(t)}^p.
\end{equation}
Similarly, by (\ref{case212})-(\ref{case2142}), we obtain
\begin{align}
&\Vert \partial_tH_1(v)(t,\cdot)\Vert_{L^2}\lesssim t^{m-(m+1)(1+\frac{n}{2})+\frac{\sqrt{\delta_1}-\mu_1+1}{2}}\Vert(u,v)\Vert_{\mathcal{X}(t)}
^p,\label{case2121}\\
&\Vert H_1(v)(t,\cdot)\Vert_{\dot{H}^\sigma}\lesssim t^{-(m+1)(\sigma+\frac{n}{2})+\frac{\sqrt{\delta_1}-\mu_1+1}{2}}
\Vert(u,v)\Vert_{\mathcal{X}(t)}^p,\label{case3111}\\
&\Vert \partial_tH_1(v)(t,\cdot)\Vert_{\dot{H}^{\sigma-1}}\lesssim t^{m-(m+1)(\sigma+\frac{n}{2})+\frac{\sqrt{\delta_1}-\mu_1+1}{2}}
\Vert(u,v)\Vert_{\mathcal{X}(t)}^p.\label{case4111}
\end{align}
Hence, (\ref{h1vl2})-(\ref{case4111}) show that
\begin{equation}\label{M1tv1}
M_1(t,H_1(v))\lesssim \Vert(u,v)\Vert_{\mathcal{X}(t)}^p.
\end{equation}

In view of $\delta_2=(m+1)^2(n+2\sigma-1)^2$ and $(m+1)n+\beta_2+(-(m+1)n-\beta_1+1)q<-1,$  we see that $\frac{\sqrt{\delta_2}}{2(m+1)}+\frac{1}{2}-\frac{n}{2}=\sigma>1$ and $\int_1^t\tau^{(m+1)n+\beta_2+
(-(m+1)n-\beta_1+1)q}d\tau<\infty$, then by Proposition \ref{linearestimate} and Lemma \ref{foures}, we have

\begin{align}
&\Vert H_2(u)(t,\cdot)\Vert_{L^2}\lesssim t^{-(m+1)\frac{n}{2}+\frac{\sqrt{\delta_2}-\mu_2+1}{2}}\int_1^t\tau^{(m+1)n+\beta_2+
(-(m+1)n-\beta_1+1)q}M_1(\tau,u)^qd\tau\notag\\
&\quad\quad\quad\quad\quad\lesssim t^{-(m+1)\frac{n}{2}+\frac{\sqrt{\delta_2}-\mu_2+1}{2}}\Vert(u,v)\Vert_{\mathcal{X}(t)}
^q,\label{case21}\\
&\Vert \partial_tH_2(u)(t,\cdot)\Vert_{L^2}
\lesssim t^{m-(m+1)(1+\frac{n}{2})+\frac{\sqrt{\delta_2}-\mu_2+1}{2}}\Vert(u,v)\Vert
_{\mathcal{X}(t)}^q,\label{case22}\\
&\Vert H_2(u)(t,\cdot)\Vert_{\dot{H}^\sigma}\lesssim t^{-\frac{\mu_2+m}{2}}\int_1^t(1+\log\frac{t}{\tau})^{\frac{1}{2}}
\tau^{(m+1)n+\beta_2+(-(m+1)n-\beta_1+1)q}M_1(\tau,u)^qd\tau\notag\\
&\quad\lesssim t^{-\frac{\mu_2+m}{2}}(1+\log t)^{\frac{1}{2}}\int_1^t\tau^{(m+1)n+\beta_2+(-(m+1)n-\beta_1+1)q}M_1(\tau,u)^qd\tau\notag\\
&\quad=t^{-(m+1)(\sigma+\frac{n}{2})+\frac{\sqrt{\delta_2}-\mu_2+1}{2}}(1+\log t)^{\frac{1}{2}}\int_1^t\tau^{(m+1)n+\beta_2+(-(m+1)n-\beta_1+1)q}
M_1(\tau,u)^qd\tau\notag\\
&\quad\lesssim t^{-(m+1)(\sigma+\frac{n}{2})+\frac{\sqrt{\delta_2}-\mu_2+1}{2}}(1+\log t)^{\frac{1}{2}}\Vert(u,v)\Vert
_{\mathcal{X}(t)}^q\label{case23}
\end{align}
and
\begin{equation}\label{case24}
\begin{aligned}
&\Vert \partial_tH_2(u)(t,\cdot)\Vert_{\dot{H}^{\sigma-1}}\lesssim t^{m-\frac{\mu_2+m}{2}}(1+\log t)^{\frac{1}{2}}\Vert(u,v)\Vert
_{\mathcal{X}(t)}^q\\
&\quad\quad\quad\quad\quad\quad=t^{m-(m+1)(\sigma+\frac{n}{2})+\frac{\sqrt{\delta_2}-\mu_2+1}{2}}(1+\log t)^{\frac{1}{2}}\Vert(u,v)\Vert
_{\mathcal{X}(t)}^q.
\end{aligned}
\end{equation}
It follows from (\ref{case21})-(\ref{case24}) that
\begin{equation}\label{M2tves2}
M_2(t,H_2(u))\lesssim \Vert (u,v)\Vert_{\mathcal{X}(t)}^q.
\end{equation}
Consequently, we conclude that (\ref{nones}) holds in this case by (\ref{M1tv1}) and (\ref{M2tves2}).

The remaining cases Case 3- Case 8 presented in (\ref{bigcase3})-(\ref{bigcase8})
can be handled in the same way as in Case 1 and Case 2. It is worth noting that  when $\sigma=1$, we should use  the definitions (\ref{M11}) and (\ref{M21}) of  $M_1(t,H_1(v))$ and $M_2(t,H_2(u))$ in place of (\ref{M1}) and (\ref{M2}), and we  will not elaborate further.
\end{proof}

\subsubsection{The proof of (\ref{key2})}
In Lemma 3.5  \cite{LiGuo2025}, we established the following preliminary results, which will be used throughout the whole proof of (\ref{key2}).
\begin{lemma}[ \cite{LiGuo2025}]\label{key2lemmaa}
For any $T>1$, $1\leq\tau\leq t\leq T$  and any $w, \tilde{w}\in \mathcal{X}(T)$, we have\\
(1) For any $s,r\geq1$,
\begin{equation}\label{u-tildeu}
\resizebox{0.91\hsize}{!}{$
\begin{aligned}
&\Vert\vert w(\tau,\cdot)\vert^s-\vert\tilde{w}(\tau,\cdot)\vert^s \Vert_{L^r}\lesssim\Vert w(\tau,\cdot)-\tilde{w}(\tau,\cdot)\Vert_{\dot{H}^\sigma}^{\theta_1(rs)}\Vert w(\tau,\cdot)-\tilde{w}(\tau,\cdot)\Vert_{L^2}^{1-\theta_1(rs)}\\
&\times\Big(\Vert w(\tau,\cdot)\Vert_{\dot{H}^\sigma}^{\theta_1(rs)(s-1)}\Vert w(\tau,\cdot)\Vert_{L^2}^{(1-\theta_1(rs))(s-1)}\Vert \tilde{w}(\tau,\cdot)\Vert_{\dot{H}^\sigma}^{\theta_1(rs)(s-1)}\Vert \tilde{w}(\tau,\cdot)\Vert_{L^2}^{(1-\theta_1(rs))(s-1)}\Bigr).
\end{aligned}$}
\end{equation}
(2)  If $\sigma>1$ and $s>\lceil \sigma-1\rceil+1\ (=\lceil \sigma\rceil\geq2)$,
\begin{align}
&\Vert\vert w(\tau,\cdot)\vert^s-\vert\tilde{w}(\tau,\cdot)\vert^s \Vert_{\dot{H}^{\sigma-1}}\lesssim\Vert w(\tau,\cdot)-\tilde{w}(\tau,\cdot)\Vert_{\dot{H}^\sigma}^{\theta_2(b_1)}\Vert w(\tau,\cdot)-\tilde{w}(\tau,\cdot)\Vert_{L^2}^{1-\theta_2(b_1)}\notag\\
&\times\Big(\Vert w(\tau,\cdot)\Vert_{\dot{H}^\sigma}^{(s-1)\theta_1((s-1)b_2)}\Vert w(\tau,\cdot)\Vert_{L^2}^{(s-1)(1-\theta_1((s-1)b_2))}\notag\\
&\quad\quad\quad+\Vert \tilde{w}(\tau,\cdot)\Vert_{\dot{H}^\sigma}^{(s-1)\theta_1((s-1)b_2)}\Vert \tilde{w}(\tau,\cdot)\Vert_{L^2}^{(s-1)(1-\theta_1((s-1)b_2))}\Bigr)\notag\\
&+\Vert w(\tau,\cdot)-\tilde{w}(\tau,\cdot)\Vert_{\dot{H}^\sigma}^{\theta_1(b_3)}\Vert w(\tau,\cdot)-\tilde{w}(\tau,\cdot)\Vert_{L^2}^{1-\theta_1(b_3)}\bigl(\Vert w(\tau,\cdot)\Vert_{\dot{H}^\sigma}+\Vert \tilde{w}(\tau,\cdot)\Vert_{\dot{H}^\sigma}\bigr)^{(s-2)\theta_1(b_5)+\theta_2(b_6)}\notag\\
&\quad\quad\times\bigl(\Vert w(\tau,\cdot)\Vert_{L^2}+\Vert \tilde{w}(\tau,\cdot)\Vert_{L^2}\bigr)^{(s-2)(1-\theta_1(b_5))+1-\theta_2(b_6)}.\label{u-tildeu2}
\end{align}
(3) If $\sigma>2$ and $s>\lceil\sigma-2\rceil+1\ (=\lceil\sigma-1\rceil\geq2)$,
\begin{align}
&\Vert\vert w(\tau,\cdot)\vert^s-\vert\tilde{w}(\tau,\cdot)\vert^s \Vert_{\dot{H}^{\sigma-2}}\lesssim\Vert w(\tau,\cdot)-\tilde{w}(\tau,\cdot)\Vert_{\dot{H}^\sigma}^{\theta_3(c_1)}\Vert w(\tau,\cdot)-\tilde{w}(\tau,\cdot)\Vert_{L^2}^{1-\theta_3(c_1)}\notag\\
&\quad\quad\times\Big(\Vert w(\tau,\cdot)\Vert_{\dot{H}^\sigma}^{(s-1)\theta_1((s-1)c_2)}\Vert w(\tau,\cdot)\Vert_{L^2}^{(s-1)(1-\theta_1((s-1)c_2))}\notag\\
&\quad\quad\quad\quad\quad\quad+\Vert \tilde{w}(\tau,\cdot)\Vert_{\dot{H}^\sigma}^{(s-1)\theta_1((s-1)c_2)}\Vert \tilde{w}(\tau,\cdot)\Vert_{L^2}^{(s-1)(1-\theta_1((s-1)c_2))}\Bigr)\notag\\
&+\Vert w(\tau,\cdot)-\tilde{w}(\tau,\cdot)\Vert_{\dot{H}^\sigma}^{\theta_1(c_3)}\Vert w(\tau,\cdot)-\tilde{w}(\tau,\cdot)\Vert_{L^2}^{1-\theta_1(c_3)}\times\bigl(\Vert w\Vert_{\dot{H}^\sigma}+\Vert \tilde{w}\Vert_{\dot{H}^\sigma}\bigr)^{(s-2)\theta_1(c_5)+\theta_3(c_6)}\notag\\
&\quad\quad\quad\times\bigl(\Vert w\Vert_{L^2}+\Vert \tilde{w}\Vert_{L^2}\bigr)^{(s-2)(1-\theta_1(c_5))+1-\theta_3(c_6)},\label{u-tildeu222}
\end{align}
where $\theta_1, \theta_2, \theta_3$ are defined in Lemma \ref{coro4.3}, and $b_i, c_i ,i=1,2,3,5,6$ satisfy
\begin{equation}
\begin{aligned}
&\frac{1}{b_1}+\frac{1}{b_2}=\frac{1}{2},\ \ \frac{1}{2}-\frac{1}{b_3}=\frac{s-2}{b_5}+\frac{1}{b_6},\\
&\frac{1}{c_1}+\frac{1}{c_2}=\frac{1}{2},\ \ \frac{1}{2}-\frac{1}{c_3}=\frac{s-2}{c_5}+\frac{1}{c_6}.
\end{aligned}
\end{equation}
\end{lemma}

\begin{lemma}\label{8cases}
In the case of $\delta_1,\delta_2\geq(m+1)^2(n+2\sigma-1)^2$ with $\sigma\geq1$,  for any $1\leq\tau\leq t\leq T$ and any $(u,v), (\tilde{u},\tilde{v})\in \mathcal{X}(T)$, we have
\begin{align}
&\Vert\vert v(\tau,\cdot)\vert^p-\vert \tilde{v}(\tau,\cdot)\vert^p\Vert_{L^1}\notag\\
&\lesssim\tau^{(m+1)n+(-(m+1)n-\beta_2+1)p}
\ell_2(\tau)^{p\theta_1(2p)}M_2(\tau,v-\tilde{v})
\big(M_2(\tau,v)^{p-1}+M_2(\tau,\tilde{v})^{p-1}\big),\label{8cases1}\\
&\Vert\vert v(\tau,\cdot)\vert^p-\vert \tilde{v}(\tau,\cdot)\vert^p\Vert_{L^2}\notag\\
&\lesssim\tau^{(m+1)\frac{n}{2}+(-(m+1)n-\beta_2+1)p}
\ell_2(\tau)^{p\theta_1(2p)}M_2(\tau,v-\tilde{v})
\big(M_2(\tau,v)^{p-1}+M_2(\tau,\tilde{v})^{p-1}\big);\label{8cases2}
\end{align}
For $p>\lceil \sigma\rceil\ (\geq2)$,
\begin{align}
&\Vert\vert v(\tau,\cdot)\vert^p-\vert \tilde{v}(\tau,\cdot)\vert^p\Vert_{\dot{H}^{\sigma-1}}\notag\\
&\lesssim\tau^{(m+1)
(\frac{n}{2}-\sigma+1)+(-(m+1)n-\beta_2+1)p}\ell_2(\tau)^{\theta_2(b_1)+(p-1)\theta_1((p-1)b_2)+\theta_1(b_3)+(p-2)
\theta_1(b_5)+\theta_2(b_6)}\notag\\
&\quad\times M_2(\tau,v-\tilde{v})
\big(M_2(\tau,v)^{p-1}+M_2(\tau,\tilde{v})^{p-1}\big);\label{8cases3}
\end{align}
If $\sigma>2$ and for $p>\lceil \sigma-1\rceil\ (\geq2)$,
\begin{align}
&\Vert\vert v(\tau,\cdot)\vert^p-\vert \tilde{v}(\tau,\cdot)\vert^p\Vert_{\dot{H}^{\sigma-2}}\notag\\
&\lesssim\tau^{(m+1)
(\frac{n}{2}-\sigma+2)+(-(m+1)n-\beta_2+1)p}\ell_2(\tau)^{\theta_3(c_1)+(p-1)\theta_1((p-1)c_2)+\theta_1(c_3)+(p-2)
\theta_1(c_5)+\theta_3(c_6)}\notag\\
&\quad\times M_2(\tau,v-\tilde{v})
\big(M_2(\tau,v)^{p-1}+M_2(\tau,\tilde{v})^{p-1}\big),\label{8cases4}
\end{align}
where $\frac{1}{b_1}+\frac{1}{b_2}=\frac{1}{2},\ \ \frac{1}{2}-\frac{1}{b_3}=\frac{p-2}{b_5}+\frac{1}{b_6}$, $\frac{1}{c_1}+\frac{1}{c_2}=\frac{1}{2},\ \ \frac{1}{2}-\frac{1}{c_3}=\frac{p-2}{c_5}+\frac{1}{c_6}$.
\begin{align}
&\Vert\vert u(\tau,\cdot)\vert^q-\vert \tilde{u}(\tau,\cdot)\vert^q\Vert_{L^1}\notag\\
&\lesssim\tau^{(m+1)n+(-(m+1)n-\beta_1+1)q}
\ell_1(\tau)^{q\theta_1(2q)}M_1(\tau,u-\tilde{u})
\big(M_1(\tau,u)^{q-1}+M_1(\tau,\tilde{u})^{q-1}\big),\label{8cases5}
\end{align}
\begin{align}
&\Vert\vert u(\tau,\cdot)\vert^q-\vert \tilde{u}(\tau,\cdot)\vert^q\Vert_{L^2}\notag\\
&\lesssim\tau^{(m+1)\frac{n}{2}+(-(m+1)n-\beta_1+1)q}
\ell_1(\tau)^{q\theta_1(2q)}M_1(\tau,u-\tilde{u})
\big(M_1(\tau,u)^{q-1}+M_1(\tau,\tilde{u})^{q-1}\big);\label{8cases6}
\end{align}
For $q>\lceil \sigma\rceil\ (\geq2)$,
\begin{align}
&\Vert\vert u(\tau,\cdot)\vert^q-\vert \tilde{u}(\tau,\cdot)\vert^q\Vert_{\dot{H}^{\sigma-1}}\notag\\
&\lesssim\tau^{(m+1)
(\frac{n}{2}-\sigma+1)+(-(m+1)n-\beta_1+1)q}\ell_1(\tau)^{\theta_2(d_1)+(q-1)\theta_1((q-1)d_2)+\theta_1(d_3)+(q-2)
\theta_1(d_5)+\theta_2(d_6)}\notag\\
&\quad\times M_1(\tau,u-\tilde{u})
\big(M_1(\tau,u)^{q-1}+M_2(\tau,\tilde{u})^{q-1}\big);\label{8cases7}
\end{align}
If $\sigma>2$ and  for   $q>\lceil \sigma-1\rceil\ (\geq2)$,
\begin{align}
&\Vert\vert u(\tau,\cdot)\vert^q-\vert \tilde{u}(\tau,\cdot)\vert^q\Vert_{\dot{H}^{\sigma-2}}\notag\\
&\lesssim\tau^{(m+1)
(\frac{n}{2}-\sigma+2)+(-(m+1)n-\beta_1+1)q}\ell_1(\tau)^{\theta_3(e_1)+(q-1)\theta_1((q-1)e_2)+\theta_1(e_3)+(q-2)
\theta_1(e_5)+\theta_3(e_6)}\notag\\
&\quad\times M_1(\tau,u-\tilde{u})
\big(M_1(\tau,u)^{q-1}+M_2(\tau,\tilde{u})^{q-1}\big),\label{8cases8}
\end{align}
where $\frac{1}{d_1}+\frac{1}{d_2}=\frac{1}{2},\ \ \frac{1}{2}-\frac{1}{d_3}=\frac{q-2}{d_5}+\frac{1}{d_6}$, $\frac{1}{e_1}+\frac{1}{e_2}=\frac{1}{2},\ \ \frac{1}{2}-\frac{1}{e_3}=\frac{q-2}{e_5}+\frac{1}{e_6}$.
\end{lemma}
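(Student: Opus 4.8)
The plan is to establish Lemma~\ref{8cases} in close parallel to the proof of Lemma~\ref{foures}, with the single-function Gagliardo--Nirenberg estimates of Lemma~\ref{coro4.3} replaced by the difference estimates of Lemma~\ref{key2lemmaa}, and with the weighted bounds encoded in the definition~(\ref{M2}) of $M_2$ (respectively~(\ref{M1}) of $M_1$) substituted into every resulting norm. The guiding principle is that $\vert v\vert^p-\vert\tilde v\vert^p$ scales like a degree-$p$ expression: one power is carried by $v-\tilde v$ and the remaining $p-1$ powers are distributed over $v$ and $\tilde v$. Hence, after substitution, the powers of $\tau$ collect to \emph{exactly} the exponent appearing in the corresponding single-power estimate of Lemma~\ref{foures}, while the extra factor involving $v-\tilde v$ produces the term $M_2(\tau,v-\tilde v)$ and the mixed $v,\tilde v$ factors are absorbed into $M_2(\tau,v)^{p-1}+M_2(\tau,\tilde v)^{p-1}$ by Young's inequality.

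Concretely, for~(\ref{8cases2}) I would apply~(\ref{u-tildeu}) with $r=2$, $s=p$, $w=v$, $\tilde w=\tilde v$, and then read off from~(\ref{M2}) the bounds
$$\Vert v(\tau,\cdot)\Vert_{L^2}\lesssim\tau^{-(m+1)\frac n2+\frac{\sqrt{\delta_2}-\mu_2+1}{2}}M_2(\tau,v),\qquad \Vert v(\tau,\cdot)\Vert_{\dot H^\sigma}\lesssim\tau^{-(m+1)(\sigma+\frac n2)+\frac{\sqrt{\delta_2}-\mu_2+1}{2}}\ell_2(\tau)M_2(\tau,v),$$
together with the identical bounds for $v-\tilde v$ and $\tilde v$. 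Combining the $\tau$-exponents with the interpolation weights $\theta_1(2p)$ and $1-\theta_1(2p)$ reproduces the exponent $(m+1)\frac n2+(-(m+1)n-\beta_2+1)p$, exactly as in the derivation of~(\ref{vpl2}). Estimate~(\ref{8cases1}) then follows from~(\ref{8cases2}) by H\"older's inequality over the support ball $B_{\phi_m(\tau)-\phi_m(1)+M}$, which contributes the volume factor $\tau^{(m+1)n/2}$, mirroring the passage from~(\ref{vpl2}) to~(\ref{vpl1}).

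For the Sobolev estimates~(\ref{8cases3}) and~(\ref{8cases4}) I would invoke parts~(2) and~(3) of Lemma~\ref{key2lemmaa}, which are licensed by the hypotheses $p>\lceil\sigma\rceil$ (giving $s=p>\lceil\sigma-1\rceil+1$) and, for~(\ref{8cases4}), $\sigma>2$ with $p>\lceil\sigma-1\rceil$. Substituting the $M_2$-bounds as above and verifying that the interpolation indices $b_i$ (respectively $c_i$) are admissible---i.e.\ that the associated $\theta_1,\theta_2,\theta_3$ all lie in their prescribed ranges---proceeds exactly along the lines of Remark~\ref{reasonablness}, using $\frac{p-2}{b_5}+\frac1{b_6}=\frac12-\frac1{b_3}$ together with the constraint $p\le1+\frac2{n-2\sigma}$ when $n>2\sigma$. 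When $\sigma=1$ one has $\dot H^{\sigma-1}=L^2$ and~(\ref{8cases3}) degenerates to~(\ref{8cases2}), so no separate argument is needed. The estimates~(\ref{8cases5})--(\ref{8cases8}) for $u$ are obtained verbatim upon replacing $(p,\beta_2,\ell_2,M_2,v,\tilde v)$ by $(q,\beta_1,\ell_1,M_1,u,\tilde u)$; I would simply record this symmetry and omit the repetition.

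The main obstacle will be the bookkeeping in~(\ref{8cases3}) and~(\ref{8cases4}). The difference inequalities~(\ref{u-tildeu2}) and~(\ref{u-tildeu222}) split into two additive pieces carrying genuinely different combinations of the interpolation indices $b_1,b_2,b_3,b_5,b_6$ (respectively $c_i$), and I must check that \emph{both} pieces yield the same power of $\tau$---so that they may be gathered into a single monomial---and that the logarithmic weights accumulate precisely to the exponent $\theta_2(b_1)+(p-1)\theta_1((p-1)b_2)+\theta_1(b_3)+(p-2)\theta_1(b_5)+\theta_2(b_6)$ displayed in~(\ref{8cases3}). The coincidence of the $\tau$-powers is again forced by the common degree-$p$ scaling of the two terms, but pinning down the exact $\ell_2$-exponent requires tracking each interpolation factor individually through the substitution, and this is the one place where the argument is not a mechanical transcription of Lemma~\ref{foures}.
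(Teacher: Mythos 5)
Your proposal follows essentially the same route as the paper: the paper also proves (\ref{8cases2}) by applying (\ref{u-tildeu}) with $r=2$, $s=p$ and substituting the weighted bounds from the definition of $M_2$, deduces (\ref{8cases1}) via H\"older's inequality over the support ball $B_{\phi_m(\tau)-\phi_m(1)+M}$ (gaining the factor $\tau^{(m+1)n/2}$), and then declares that (\ref{8cases3})--(\ref{8cases8}) "follow a similar approach" using parts (2)--(3) of Lemma \ref{key2lemmaa}, with the admissibility of the indices $b_i,c_i,d_i,e_i$ handled separately in Remark \ref{existencesbbb}. Your plan is correct and, if anything, more explicit than the paper about the one genuinely non-mechanical point (tracking the $\ell_2$-exponents through the two additive pieces of (\ref{u-tildeu2}) and (\ref{u-tildeu222})), which the paper omits entirely.
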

\begin{remark}\label{existencesbbb}
Before proving Lemma \ref{8cases}, we need to show that, under the assumptions of Theorem \ref{theorem11}, the coefficients $b_i,c_i,d_i,e_i$ ($i=1,\cdots,6$) exist.  First, the parameter $p$  needs to satisfy $\theta_1(2p)\in[0,1],$  which is exactly the same as (\ref{condi22}). The parameters $b_1,b_2$ need to satisfy $\theta_1((p-1)b_2)\in[0,1],\theta_2(b_1)\in[\frac{\sigma-1}{\sigma},1]$, respectively, which is equivalent to
\begin{equation*}
\frac{1}{b_1}\in\left\{
\begin{aligned}
&(0,\frac{1}{2}],\ &n\leq2,\\
&[\frac{n-2}{2n},\frac{1}{2}],\ &n>2
\end{aligned}
\right.\
\text{and} \
\frac{1}{b_2}\in
\left\{
\begin{aligned}
&(0,\frac{p-1}{2}],\ &n\leq 2\sigma,\\
&[\frac{(p-1)(n-2\sigma)}{2n},\frac{p-1}{2}], \ & n> 2\sigma.
\end{aligned}
\right.
\end{equation*}
Since $\frac{1}{b_1}+\frac{1}{b_2}=\frac{1}{2}$ and $\sigma\geq1$, it suffices  to ensure that
\begin{equation*}
\frac{1}{2}\in\left\{
\begin{aligned}
&(0,\frac{p}{2}],\ &n\leq2,\\
&(\frac{n-2}{2n},\frac{p}{2}],\ &2<n\leq2\sigma,\\
&[\frac{(p-1)(n-2\sigma)+n-2}{2n},\frac{p}{2}],\ &n>2\sigma.
\end{aligned}
\right.
\end{equation*}
The condition $p\leq 1+\frac{2}{n-2\sigma}$ (if\ $n>2\sigma$) in Theorem \ref{theorem11}  ensures that the above expressions are satisfied. Regarding $b_3,b_5,b_6$, it  is necessary to ensure that
$\theta_1(b_3),\theta_1(b_5)\in[0,1],\theta_2(b_6)\in[\frac{\sigma-1}{\sigma},1]$. This is equivalent to simply ensuring that
\begin{equation}\label{b356}
\frac{1}{b_3}, \frac{1}{b_5}\in\left\{
\begin{aligned}
&(0,\frac{1}{2}],\ &n\leq2\sigma,\\
&[\frac{n-2\sigma}{2n},\frac{1}{2}],\ &n>2\sigma
\end{aligned}
\right.\
\
\text{and} \ \
\frac{1}{b_6}\in
\left\{
\begin{aligned}
&(0,\frac{1}{2}],\ &n\leq 2,\\
&[\frac{n-2}{2n},\frac{1}{2}], \ & n> 2.
\end{aligned}
\right.
\end{equation}
For convenience, let $\frac{1}{b_4}:=\frac{p-2}{b_5}+\frac{1}{b_6}$,  and note that  $\sigma\geq1$, the restrictions (\ref{b356}) on $b_i (i=3,5,6)$  turn into
\begin{equation}\label{b4}
\frac{1}{b_4}\in
\left
\{
\begin{aligned}
&(0,\frac{p-1}{2}],\ &n\leq 2,\\
&(\frac{n-2}{2n},\frac{p-1}{2}],\ &2<n\leq2\sigma,\\
&[\frac{(n-2\sigma)(p-2)+n-2}{2n},\frac{p-1}{2}],\ &n>2\sigma.
\end{aligned}
\right.
\end{equation}
Moreover, due to  $\frac{p-2}{b_5}+\frac{1}{b_6}=\frac{1}{2}-\frac{1}{b_3}$, together with  (\ref{b356}), we see $b_4$ also needs to satisfy
\begin{equation}\label{b42}
\frac{1}{b_4}\in\left\{\begin{aligned}&[0,\frac{1}{2}),\ &n\leq2\sigma,\\&[0,\frac{\sigma}{n}],\ &n>2\sigma. \end{aligned}\right.
\end{equation}
Hence, the existence  for $b_i, i=3,5,6$ reduce to ensuring that both (\ref{b4}) and (\ref{b42}) hold simultaneously, which means that we need to guarantee
\begin{equation}\label{vari}
\left\{
\begin{aligned}
&[0,\frac{1}{2})\cap (0,\frac{p-1}{2}]\neq \emptyset, \ & n\leq2,\\
&[0,\frac{1}{2})\cap (\frac{n-2}{2n},\frac{p-1}{2}]\neq \emptyset, \ & 2< n\leq2\sigma,\\
&[0,\frac{\sigma}{n}]\cap [\frac{(n-2\sigma)(p-2)+n-2}{2n},\frac{p-1}{2}]\neq \emptyset, \ &  n>2\sigma.
\end{aligned}
\right.
\end{equation}
The condition $p\leq 1+\frac{2}{n-2\sigma}$ (when\ $n>2\sigma$) ensures the validity of (\ref{vari}).
So far, we have completed the examination of the existence of $b_i, i=1,2,3,5,6$.

Under the assumption $\sigma\geq2$, we need to guarantee $\theta_1((p-1)c_2), \theta_1(c_3),\theta_1(c_5)\in[0,1]$, $\theta_3(c_1)\in[\frac{\sigma-2}{\sigma},1]$ and $\theta_3(c_6)\in[\frac{\sigma-2}{\sigma},1]$.  Repeating the argument for the existence of $b_i$, one can similarly deduce that the condition $p\leq1+\frac{4}{n-2\sigma}\ (\text{when}\ n>2\sigma)$ guarantees the existence of $c_i,i=1,2,3,5,6$.

The verification of the existence of $d_i,e_i,i=1,2,3,5,6$
follows the same process as above and will not be repeated here.
\end{remark}

\begin{proof}[Proof of Lemma \ref{8cases}]
By (\ref{u-tildeu}) in Lemma \ref{key2lemmaa} and the definitions of $M_2(\tau,v), M_2(\tau,\tilde{v}),M_2(\tau,v-\tilde{v})$, we have
\begin{equation*}
\resizebox{0.96\hsize}{!}{$
\begin{aligned}
&\Vert\vert v(\tau,\cdot)\vert^p-\vert \tilde{v}(\tau,\cdot)\vert^p\Vert_{L^2}\lesssim\Vert v(\tau,\cdot)-\tilde{v}(\tau,\cdot)\Vert_{\dot{H}^\sigma}^{\theta_1(2p)}\Vert v(\tau,\cdot)-\tilde{v}(\tau,\cdot)\Vert_{L^2}^{1-\theta_1(2p)}\\
&\times\Bigl(\Vert v(\tau,\cdot)\Vert_{\dot{H}^\sigma}^{\theta_1(2p)(p-1)}\Vert v(\tau,\cdot)\Vert_{L^2}^{(1-\theta_1(2p))(p-1)}+\Vert \tilde{v}(\tau,\cdot)\Vert_{\dot{H}^\sigma}^{\theta_1(2p)(p-1)}\Vert \tilde{v}(\tau,\cdot)\Vert_{L^2}^{(1-\theta_1(2p))(p-1)}\Bigr)\\
&\lesssim \tau^{(-\frac{(m+1)n}{2}+\frac{\sqrt{\delta_2}-\mu_2+1}{2})p-(m+1)\sigma p\theta_1(2p)}\ell_2(\tau)^{p\theta_1(2p)}M_2(\tau,v-\tilde{v})
\big(M_2(\tau,v)^{p-1}+M_2(\tau,\tilde{v})^{p-1}\big)\\
&=\tau^{\frac{(m+1)n}{2}+(-(m+1)n-\beta_2+1)p}\ell_2(\tau)^{p\theta_1(2p)}M_2(\tau,v-\tilde{v})
\big(M_2(\tau,v)^{p-1}+M_2(\tau,\tilde{v})^{p-1}\big).
\end{aligned}$}
\end{equation*}
By H$\ddot{\text{o}}$lder's inequality,  we have
\begin{align}
&\Vert\vert v(\tau,\cdot)\vert^p-\vert \tilde{v}(\tau,\cdot)\vert^p\Vert_{L^1}\leq\Vert\vert v(\tau,\cdot)\vert^p-\vert \tilde{v}(\tau,\cdot)\vert^p\Vert_{L^2}\big(\int_{\vert x\vert\leq \phi_m(\tau)-\phi_m(1)+M}dx\big)^{\frac{1}{2}}\notag\\
&\lesssim \tau^{\frac{(m+1)n}{2}}\Vert\vert v(\tau,\cdot)\vert^p-\vert \tilde{v}(\tau,\cdot)\vert^p\Vert_{L^2}\notag\\
&\lesssim \tau^{(m+1)n+(-(m+1)n-\beta_2+1)p}\ell_2(\tau)^{p\theta_1(2p)}M_2(\tau,v-\tilde{v})
\big(M_2(\tau,v)^{p-1}+M_2(\tau,\tilde{v})^{p-1}\big).\notag
\end{align}
The proofs of (\ref{8cases3})-(\ref{8cases8})  follow a similar approach so the details are omitted.
\end{proof}

Subsequently,  we  use Lemma \ref{8cases} along with Proposition \ref{unlinearestimate} to establish the proof of (\ref{key2}).

\begin{proof}[Proof of (\ref{key2})]
Case 1: $\delta_1,\delta_2>(m+1)^2(n+2\sigma-1)^2$ with $\sigma>1$. In this case, $\frac{\sqrt{\delta_1}}{2(m+1)}+\frac{1}{2}-\frac{n}{2}>\sigma>1$, using (\ref{u0=0u}) in Proposition \ref{unlinearestimate},  (\ref{8cases1})-(\ref{8cases2}) in Lemma \ref{8cases} and the integrability condition (\ref{infite1}), we obtain
\small{
\begin{align}
&\Vert H_1(v)(t,\cdot)-H_1(\tilde{v})(t,\cdot)\Vert_{L^2}\lesssim \int_1^t\Vert E_1^{\mu_1,\nu_1}(t,\tau,x)\ast\big(\vert v(\tau,x)\vert^p-\vert \tilde{v}(\tau,x)\vert^p\big)\Vert_{L^2}d\tau\notag\\
&\lesssim  t^{-(m+1)\frac{n}{2}+\frac{\sqrt{\delta_1}-\mu_1+1}{2}}\int_1^t\tau^{-\frac{\sqrt{\delta_1}
-\mu_1-1}{2}}\big(\Vert\vert v(\tau,\cdot)\vert^p-\vert \tilde{v}(\tau,\cdot)\vert^p\Vert_{L^1}d\tau\notag\\
&\quad\quad\quad\quad\quad\quad\quad\quad+\tau^{(m+1)\frac{n}{2}}\Vert\vert v(\tau,\cdot)\vert^p-\vert \tilde{v}(\tau,\cdot)\vert^p\Vert_{L^2}\big)d\tau\notag\\
&\lesssim t^{-(m+1)\frac{n}{2}+\frac{\sqrt{\delta_1}-\mu_1+1}{2}}\int_1^t\tau^{(m+1)n+\beta_1+(-(m+1)n-\beta_2+1)p} M_2(\tau,v-\tilde{v})\big(M_2(\tau,v)^{p-1}+M_2(\tau,\tilde{v})^{p-1}\big)d\tau\notag\\
&\lesssim t^{-(m+1)\frac{n}{2}+\frac{\sqrt{\delta_1}-\mu_1+1}{2}}\Vert (u,v)-(\tilde{u},\tilde{v})\Vert_{\mathcal{X}(t)}\bigl(\Vert (u,v)\Vert_{\mathcal{X}(t)}^{p-1}+\Vert (\tilde{u},\tilde{v})\Vert_{\mathcal{X}(t)}^{p-1}\bigr).\label{1case1}
\end{align}
}

In a similar manner,
\begin{align}
&\Vert \partial_t\big(H_1(v)(t,\cdot)-H_1(\tilde{v})(t,\cdot)\big)\Vert_{L^2}\notag\\
&\lesssim t^{m-(m+1)(1+\frac{n}{2})+\frac{\sqrt{\delta_1}-\mu_1+1}{2}}\Vert (u,v)-(\tilde{u},\tilde{v})\Vert_{\mathcal{X}(t)}\bigl(\Vert (u,v)\Vert_{\mathcal{X}(t)}^{p-1}+\Vert (\tilde{u},\tilde{v})\Vert_{\mathcal{X}(t)}^{p-1}\bigr).\label{1case2}
\end{align}
By (\ref{u0=0u}), (\ref{8cases1}) and (\ref{8cases3}), we get
\small{
\begin{align}
&\Vert H_1(v)(t,\cdot)-H_1(\tilde{v})(t,\cdot)\Vert_{\dot{H}^{\sigma}}\lesssim t^{-(m+1)(\sigma+\frac{n}{2})+\frac{\sqrt{\delta_1}-\mu_1+1}{2}}\int_1^t\tau^
{-\frac{\sqrt{\delta_1}-\mu_1-1}{2}}\Vert\vert v(\tau,\cdot)\vert^p-\vert\tilde{v}(\tau,\cdot)\vert^p\Vert_{L^1}d\tau\notag\\
&\quad\quad +t^{-(m+1)
(\sigma+\frac{n}{2})+\frac{\sqrt{\delta_1}-\mu_1+1}{2}}\int_1^t\tau^
{-\frac{\sqrt{\delta_1}-\mu_1-1}{2}+(m+1)(\frac{n}{2}+\sigma-1)}\Vert\vert v(\tau,\cdot)\vert^p-\vert \tilde{v}(\tau,\cdot)\vert^p\Vert_{\dot{H}^{\sigma-1}}d\tau\notag\\
&\lesssim t^{-(m+1)
(\sigma+\frac{n}{2})+\frac{\sqrt{\delta_1}-\mu_1+1}{2}}\int_1^t\tau^{(m+1)n+\beta_1+
(-(m+1)n-\beta_2+1)p} M_2(\tau,v-\tilde{v})\big(M_2(\tau,v)^{p-1}\notag\\
&\quad\quad\quad\quad\quad\quad\quad\quad\quad\quad\quad\quad\quad\quad\quad\quad\quad\quad\quad\quad\quad\quad\quad\quad\quad\quad\quad\quad\quad\quad\quad\quad+M_2(\tau,\tilde{v})^{p-1}\big)d\tau\notag\\
&\lesssim t^{-(m+1)
(\sigma+\frac{n}{2})+\frac{\sqrt{\delta_1}-\mu_1+1}{2}}\Vert (u,v)-(\tilde{u},\tilde{v})\Vert_{\mathcal{X}(t)}\bigl(\Vert (u,v)\Vert_{\mathcal{X}(t)}^{p-1}+\Vert (\tilde{u},\tilde{v})\Vert_{\mathcal{X}(t)}^{p-1}\bigr).\label{1case3}
\end{align}
}
If $\sigma>2$, using (\ref{u0=0ut}), (\ref{8cases1}), (\ref{8cases3})-(\ref{8cases4}) yields
\small{
\begin{align}
&\Vert\partial_t(H_1(v)(t,\cdot)-H_1(\tilde{v})(t,\cdot))\Vert_{\dot{H}^{\sigma-1}}\notag\\
&\lesssim t^{m-(m+1)
(\sigma+\frac{n}{2})+\frac{\sqrt{\delta_1}-\mu_1+1}{2}}\int_1^t\tau^{
-\frac{\sqrt{\delta_1}-\mu_1-1}{2}}\big(\Vert\vert v(\tau,\cdot)\vert^p-\vert\tilde{v}(\tau,\cdot)\vert^p\Vert_{L^1}d\tau\notag\\
&\quad+\tau^{(m+1)(\frac{n}{2}+\sigma-1)}\Vert\vert v(\tau,\cdot)\vert^p-\vert\tilde{v}(\tau,\cdot)\vert^p\Vert_{\dot{H}^{\sigma-1}}+\tau^{(m+1)(\frac{n}{2}+\sigma-2)}\Vert\vert v(\tau,\cdot)\vert^p-\vert\tilde{v}(\tau,\cdot)\vert^p\Vert_{\dot{H}^{\sigma-2}}\big)d\tau\notag\\
&\lesssim t^{m-(m+1)
(\sigma+\frac{n}{2})+\frac{\sqrt{\delta_1}-\mu_1+1}{2}}\Vert (u,v)-(\tilde{u},\tilde{v})\Vert_{\mathcal{X}(t)}\bigl(\Vert (u,v)\Vert_{\mathcal{X}(t)}^{p-1}+\Vert (\tilde{u},\tilde{v})\Vert_{\mathcal{X}(t)}^{p-1}\bigr),\label{1case41}
\end{align}
}
while if $1<\sigma<2$, by (\ref{u0=0ut}), (\ref{8cases1})-(\ref{8cases3}), we can also get
\begin{equation}\label{1case42}
\resizebox{0.79\hsize}{!}{$
\begin{aligned}
&\Vert\partial_t(H_1(v)(t,\cdot)-H_1(\tilde{v})(t,\cdot))\Vert_{\dot{H}^{\sigma-1}}\\
&\lesssim t^{m-(m+1)
(\sigma+\frac{n}{2})+\frac{\sqrt{\delta_1}-\mu_1+1}{2}}\Vert (u,v)-(\tilde{u},\tilde{v})\Vert_{\mathcal{X}(t)}\bigl(\Vert (u,v)\Vert_{\mathcal{X}(t)}^{p-1}+\Vert (\tilde{u},\tilde{v})\Vert_{\mathcal{X}(t)}^{p-1}\bigr).
\end{aligned}$}
\end{equation}
Hence, from (\ref{1case1})-(\ref{1case42}), we infer that
\begin{equation}\label{1casecon}
M_1(t,H_1(v)-H_1(\tilde{v}))\lesssim \Vert (u,v)-(\tilde{u},\tilde{v})\Vert_{\mathcal{X}(t)}\bigl(\Vert (u,v)\Vert_{\mathcal{X}(t)}^{p-1}+\Vert (\tilde{u},\tilde{v})\Vert_{\mathcal{X}(t)}^{p-1}\bigr).
\end{equation}

By (\ref{u0=0u}), (\ref{8cases5})-(\ref{8cases6}) and the integrability condition (\ref{infite2}), we get
\small{
\begin{align}
&\Vert H_2(u)(t,\cdot)-H_2(\tilde{u})(t,\cdot)\Vert_{L^2}\notag\\
&\lesssim t^{-(m+1)\frac{n}{2}+\frac{\sqrt{\delta_2}-\mu_2+1}{2}}\int_1^t\tau^
{-\frac{\sqrt{\delta_2}-\mu_2-1}{2}}\big(\Vert\vert u(\tau,\cdot)\vert^q-\vert \tilde{u}(\tau,\cdot)\vert^q\Vert_{L^1}d\tau\notag\\
&\quad\quad\quad\quad\quad\quad\quad\quad\quad\quad\quad\quad\quad\quad\quad+\tau^{(m+1)\frac{n}{2}}\Vert\vert u(\tau,\cdot)\vert^q-\vert \tilde{u}(\tau,\cdot)\vert^q\Vert_{L^2}\big)d\tau\notag\\
&\lesssim t^{-(m+1)\frac{n}{2}+\frac{\sqrt{\delta_2}-\mu_2+1}{2}}\int_1^t\tau^
{(m+1)n+\beta_2+(-(m+1)n-\beta_1+1)q} M_1(\tau,u-\tilde{u})\big(M_1(\tau,u)^{q-1}+
M_1(\tau,\tilde{u})^{q-1}\big)d\tau\notag\\
&\lesssim t^{-(m+1)\frac{n}{2}+\frac{\sqrt{\delta_2}-\mu_2+1}{2}}\Vert (u,v)-(\tilde{u},\tilde{v})\Vert_{\mathcal{X}(t)}\bigl(\Vert (u,v)\Vert_{\mathcal{X}(t)}^{q-1}+\Vert (\tilde{u},\tilde{v})\Vert_{\mathcal{X}(t)}^{q-1}\bigr).\label{2case1}
\end{align}
}
Following the same procedure, we arrive at the following  estimates
\begin{equation*}\label{2case2}
\resizebox{1.03\hsize}{!}{$
\begin{aligned}
&\Vert \partial_t\big(H_2(u)(t,\cdot)-H_2(\tilde{u})(t,\cdot)\big)\Vert_{L^2}\lesssim t^{m-(m+1)(1+\frac{n}{2})+\frac{\sqrt{\delta_2}-\mu_2+1}{2}}\Vert (u,v)-(\tilde{u},\tilde{v})\Vert_{\mathcal{X}(t)}\bigl(\Vert (u,v)\Vert_{\mathcal{X}(t)}^{q-1}+\Vert (\tilde{u},\tilde{v})\Vert_{\mathcal{X}(t)}^{q-1}\bigr),\\
&\Vert H_2(u)(t,\cdot)-H_2(\tilde{u})(t,\cdot)\Vert_{\dot{H}^{\sigma}}\lesssim t^{-(m+1)(\sigma+\frac{n}{2})+\frac{\sqrt{\delta_2}-\mu_2+1}{2}}\Vert (u,v)-(\tilde{u},\tilde{v})\Vert_{\mathcal{X}(t)}\bigl(\Vert (u,v)\Vert_{\mathcal{X}(t)}^{q-1}+\Vert (\tilde{u},\tilde{v})\Vert_{\mathcal{X}(t)}^{q-1}\bigr),\\
&\Vert \partial_t\big(H_2(u)(t,\cdot)-H_2(\tilde{u})(t,\cdot)\big)\Vert_{\dot{H}^{\sigma-1}}\lesssim t^{m-(m+1)(\sigma+\frac{n}{2})+\frac{\sqrt{\delta_2}-\mu_2+1}{2}}\Vert (u,v)-(\tilde{u},\tilde{v})\Vert_{\mathcal{X}(t)}\bigl(\Vert (u,v)\Vert_{\mathcal{X}(t)}^{q-1}+\Vert (\tilde{u},\tilde{v})\Vert_{\mathcal{X}(t)}^{q-1}\bigr).
\end{aligned}$}
\end{equation*}
Then we derive
\begin{equation}\label{2casecon0}
M_2(t,H_2(u)-H_2(\tilde{u}))\lesssim \Vert (u,v)-(\tilde{u},\tilde{v})\Vert_{\mathcal{X}(t)}\bigl(\Vert (u,v)\Vert_{\mathcal{X}(t)}^{q-1}+\Vert (\tilde{u},\tilde{v})\Vert_{\mathcal{X}(t)}^{q-1}\bigr).
\end{equation}

Consequently, combining (\ref{1casecon}) and (\ref{2casecon0}) gives
\begin{equation*}
\resizebox{0.99\hsize}{!}{$
\begin{aligned}
&\Vert \mathcal{N}(u,v)-\mathcal{N}(\tilde{u},\tilde{v})\Vert_{\mathcal{X}(T)}=\big\Vert\big(H_1(v)(t,\cdot)-H_1(\tilde{v})(t,\cdot),H_2(u)(t,\cdot)-H_2(\tilde{u})
(t,\cdot)\big)\big\Vert_{\mathcal{X}(T)}\\
&\lesssim  \Vert (u,v)-(\tilde{u},\tilde{v})\Vert_{\mathcal{X}(T)}
\bigl(\Vert (u,v)\Vert_{\mathcal{X}(T)}^{p-1}+\Vert (u,v)\Vert_{\mathcal{X}(T)}^{q-1}+\Vert (\tilde{u},\tilde{v})\Vert_{\mathcal{X}(T)}^{p-1}+\Vert (\tilde{u},\tilde{v})\Vert_{\mathcal{X}(T)}^{q-1}\bigr).
\end{aligned}$}
\end{equation*}

Case 2: $\delta_1>(m+1)^2(n+2\sigma-1)^2$, $\delta_2=(m+1)^2(n+2\sigma-1)^2$, $\sigma>1$. 
In this case, $\mathcal{l}_1(\tau)=1$ and $\mathcal{l}_2(\tau)=(1+\log\tau)^{\frac{1}{2}}$. By Proposition \ref{unlinearestimate} and Lemma \ref{8cases}, we get
\begin{equation*}\label{2case11}
\resizebox{0.98\hsize}{!}{$
\begin{aligned}
&\Vert H_1(v)(t,\cdot)-H_1(\tilde{v})(t,\cdot)\Vert_{L^2}\lesssim t^{-(m+1)\frac{n}{2}+\frac{\sqrt{\delta_1}-\mu_1+1}{2}}
\int_1^t\tau^{(m+1)n+\beta_1+(-(m+1)n-\beta_2+1)p}\\
&\quad\times(1+\log \tau)^{\frac{p\theta_1(2p)}{2}} M_2(\tau,v-\tilde{v})\big(M_2(\tau,v)^{p-1}+M_2(\tau,\tilde{v})^{p-1}\big)d\tau\\
&\lesssim t^{-(m+1)\frac{n}{2}+\frac{\sqrt{\delta_1}-\mu_1+1}{2}}\Vert (u,v)-(\tilde{u},\tilde{v})\Vert_{\mathcal{X}(t)}\bigl(\Vert (u,v)\Vert_{\mathcal{X}(t)}^{p-1}+\Vert (\tilde{u},\tilde{v})\Vert_{\mathcal{X}(t)}^{p-1}\bigr),
\end{aligned}$}
\end{equation*}
here we used $
\int_1^t\tau^{(m+1)n+\beta_1+(-(m+1)n-\beta_2+1)p}(1+\log \tau)^{\frac{p\theta_1(2p)}{2}}<\infty$, one can see (\ref{ingre}).
With the same method, we conclude
\begin{equation*}\label{2case2}
\resizebox{1.03\hsize}{!}{$
\begin{aligned}
&\Vert \partial_t\big(H_1(v)(t,\cdot)-H_1(\tilde{v})(t,\cdot)\big)\Vert_{L^2}\lesssim t^{m-(m+1)(1+\frac{n}{2})+\frac{\sqrt{\delta_1}-\mu_1+1}{2}}\Vert (u,v)-(\tilde{u},\tilde{v})\Vert_{\mathcal{X}(t)}\bigl(\Vert (u,v)\Vert_{\mathcal{X}(t)}^{p-1}+\Vert (\tilde{u},\tilde{v})\Vert_{\mathcal{X}(t)}^{p-1}\bigr),\\
&\Vert H_1(v)(t,\cdot)-H_1(\tilde{v})(t,\cdot)\Vert_{\dot{H}^{\sigma}}\lesssim t^{-(m+1)
(\sigma+\frac{n}{2})+\frac{\sqrt{\delta_1}-\mu_1+1}{2}}\Vert (u,v)-(\tilde{u},\tilde{v})\Vert_{\mathcal{X}(t)}\notag\bigl(\Vert (u,v)\Vert_{\mathcal{X}(t)}^{p-1}+\Vert (\tilde{u},\tilde{v})\Vert_{\mathcal{X}(t)}^{p-1}\bigr),\\
&\Vert\partial_t(H_1(v)(t,\cdot)-H_1(\tilde{v})(t,\cdot))\Vert_{\dot{H}^{\sigma-1}}\lesssim t^{m-(m+1)
(\sigma+\frac{n}{2})+\frac{\sqrt{\delta_1}-\mu_1+1}{2}}\Vert (u,v)-(\tilde{u},\tilde{v})\Vert_{\mathcal{X}(t)}\notag\bigl(\Vert (u,v)\Vert_{\mathcal{X}(t)}^{p-1}+\Vert (\tilde{u},\tilde{v})\Vert_{\mathcal{X}(t)}^{p-1}\bigr).
\end{aligned}$}
\end{equation*}
Thus
\begin{equation}\label{2casecon}
M_1(t,H_1(v)-H_1(\tilde{v}))\lesssim \Vert (u,v)-(\tilde{u},\tilde{v})\Vert_{\mathcal{X}(t)}\bigl(\Vert (u,v)\Vert_{\mathcal{X}(t)}^{p-1}+\Vert (\tilde{u},\tilde{v})\Vert_{\mathcal{X}(t)}^{p-1}\bigr).
\end{equation}

Note that $\frac{\sqrt{\delta_2}}{2(m+1)}+\frac{1}{2}-\frac{n}{2}=\sigma>1$, by Proposition \ref{unlinearestimate}, Lemma \ref{8cases} and the integrability condition (\ref{infite2}), we have
\begin{align}
&\Vert H_2(u)(t,\cdot)-H_2(\tilde{u})(t,\cdot)\Vert_{L^2}\lesssim t^{-(m+1)\frac{n}{2}+\frac{\sqrt{\delta_2}-\mu_2+1}{2}}\notag\\
&\quad\times\int_1^t\tau^
{(m+1)n+\beta_2+(-(m+1)n-\beta_1+1)q} M_1(\tau,u-\tilde{u})\big(M_1(\tau,u)^{q-1}+
M_1(\tau,\tilde{u})^{q-1}\big)d\tau\notag\\
&\lesssim t^{-(m+1)\frac{n}{2}+\frac{\sqrt{\delta_2}-\mu_2+1}{2}}\Vert (u,v)-(\tilde{u},\tilde{v})\Vert_{\mathcal{X}(t)}\bigl(\Vert (u,v)\Vert_{\mathcal{X}(t)}^{q-1}+\Vert (\tilde{u},\tilde{v})\Vert_{\mathcal{X}(t)}^{q-1}\bigr),\label{2case25}
\end{align}
\begin{align}
&\Vert \partial_t\big( H_2(u)(t,\cdot)-H_2(\tilde{u})(t,\cdot)\big)\Vert_{L^2}\notag\\
&\lesssim t^{m-(m+1)(1+\frac{n}{2})+\frac{\sqrt{\delta_2}-\mu_2+1}{2}}\Vert (u,v)-(\tilde{u},\tilde{v})\Vert_{\mathcal{X}(t)}\bigl(\Vert (u,v)\Vert_{\mathcal{X}(t)}^{q-1}+\Vert (\tilde{u},\tilde{v})\Vert_{\mathcal{X}(t)}^{q-1}\bigr),\label{2case26}
\end{align}
\begin{align}
&\Vert H_2(u)(t,\cdot)-H_2(\tilde{u})(t,\cdot)\Vert_{\dot{H}^\sigma}\lesssim t^{-\frac{\mu_2+m}{2}}\int_1^t\tau^{(m+1)n+\beta_2+(-(m+1)n-\beta_1+1)q} (1+\log \frac{t}{\tau})^{\frac{1}{2}}\notag\\
&\quad\quad\quad\times M_1(\tau,u-\tilde{u})\big(M_1(\tau,u)^{q-1}+
M_1(\tau,\tilde{u})^{q-1}\big)d\tau\notag\\
&\lesssim t^{-\frac{\mu_2+m}{2}}(1+\log t)^{\frac{1}{2}}\Vert (u,v)-(\tilde{u},\tilde{v})\Vert_{\mathcal{X}(t)}\bigl(\Vert (u,v)\Vert_{\mathcal{X}(t)}^{q-1}+\Vert (\tilde{u},\tilde{v})\Vert_{\mathcal{X}(t)}^{q-1}\bigr)\notag\\
&=t^{-(m+1)(\sigma+\frac{n}{2})+\frac{\sqrt{\delta_2}-\mu_2+1}{2}}(1+\log t)^{\frac{1}{2}}\Vert (u,v)-(\tilde{u},\tilde{v})\Vert_{\mathcal{X}(t)}\bigl(\Vert (u,v)\Vert_{\mathcal{X}(t)}^{q-1}+\Vert (\tilde{u},\tilde{v})\Vert_{\mathcal{X}(t)}^{q-1}\bigr)\label{2case27}
\end{align}
and
\begin{align}
&\Vert \partial_t\big( H_2(u)(t,\cdot)-H_2(\tilde{u})(t,\cdot)\big)\Vert_{\dot{H}^{\sigma-1}}\notag\\
&\lesssim t^{m-(m+1)(\sigma+\frac{n}{2})+\frac{\sqrt{\delta_2}-\mu_2+1}{2}}(1+\log t)^{\frac{1}{2}}\Vert (u,v)-(\tilde{u},\tilde{v})\Vert_{\mathcal{X}(t)}\bigl(\Vert (u,v)\Vert_{\mathcal{X}(t)}^{q-1}+\Vert (\tilde{u},\tilde{v})\Vert_{\mathcal{X}(t)}^{q-1}\bigr).\label{2case28}
\end{align}
Combining (\ref{2case25})-(\ref{2case28}) yields
\begin{equation}\label{21casecon}
M_2(t,H_2(u)-H_2(\tilde{u}))\lesssim \Vert (u,v)-(\tilde{u},\tilde{v})\Vert_{\mathcal{X}(t)}\bigl(\Vert (u,v)\Vert_{\mathcal{X}(t)}^{q-1}+\Vert (\tilde{u},\tilde{v})\Vert_{\mathcal{X}(t)}^{q-1}\bigr).
\end{equation}
Therefore, it follows from (\ref{2casecon}) and (\ref{21casecon}) that
\begin{equation*}
\begin{aligned}
&\Vert \mathcal{N}(u,v)-\mathcal{N}(\tilde{u},\tilde{v})\Vert_{\mathcal{X}(T)}\\
&\lesssim
\Vert (u,v)-(\tilde{u},\tilde{v})\Vert_{\mathcal{X}(T)}
\bigl(\Vert (u,v)\Vert_{\mathcal{X}(T)}^{p-1}+\Vert (u,v)\Vert_{\mathcal{X}(T)}^{q-1}+\Vert (\tilde{u},\tilde{v})\Vert_{\mathcal{X}(T)}^{p-1}+\Vert (\tilde{u},\tilde{v})\Vert_{\mathcal{X}(T)}^{q-1}\bigr).
\end{aligned}
\end{equation*}

For the remaining cases Case 3- Case 8 listed in (\ref{bigcase3})- (\ref{bigcase8}),
we can proceed in a similar way and will not provide further details.
\end{proof}
\subsection{The proof of Theorem \ref{theorem12}}\label{proofthm12}
In this section, we aim to prove the existence of global solutions for
$p\leq\tilde{p}$ and $q>\tilde{q}$, corresponding to Theorem \ref{theorem12}.  The proof relies on Propositions \ref{linearestimate}-\ref{unlinearestimate}, Lemma \ref{foures} and Lemma \ref{8cases}, so we will not repeatedly highlight these elements in the discussion.

Based on the integrability analysis in Section \ref{prooftheorem2}, specifically (\ref{infite1}) and (\ref{ingre}), it is clear that for different values of $\delta_1,\delta_2$, the presence of $(1+\log \tau)^{\frac{1}{2}}$  does not affect the integrability. However, when $p\leq\tilde{p}$,  the situation changes, which is why we exclude the case of $\delta_2=(m+1)^2(n+2\sigma-1)^2$. We  clarify the specific reasons for this at the end of this section, i.e., Remark \ref{re444}.

By the definition (\ref{normxt}) of $\mathcal{X}(T)$, we see
\begin{equation}\label{defdef41}
\Vert(u,v)\Vert_{\mathcal{X}(T)}=\sup\limits_{t\in[1,T]}\big(t^{-\alpha_1}M_1
(t,u)+M_2(t,v)\big),
\end{equation}
where $\alpha_1$ is given by (\ref{alpha1}), i.e.,
\begin{equation*}
\alpha_1=\left\{
\begin{aligned}
&\big((m+1)n+\beta_2-1\big)(\tilde{p}-p),&\quad& \text{if}\ \ p<\tilde{p}, \\
&\epsilon, &\quad&\text{if}\ \ p=\tilde{p}.
\end{aligned}
\right.
\end{equation*}
As for the operator $\mathcal{N}$ defined by (\ref{operator}),  our goal remains to establish
\begin{proposition}\label{keyyypro}
Under the conditions of Theorem \ref{theorem12},
there exists a constant $C>0$ such that for any $T>1$  and any $(u,v), (\tilde{u},\tilde{v})\in \mathcal{X}(T)$,
\begin{align}
&\Vert \mathcal{N}(u,v)\Vert_{\mathcal{X}(T)}\leq C(\Vert(u_0,u_1)\Vert_{D^\sigma}+\Vert(v_0,v_1)\Vert_{D^\sigma})+C(\Vert (u,v)\Vert_{\mathcal{X}{(T)}}^p+\Vert (u,v)\Vert_{\mathcal{X}{(T)}}^q),\label{key421}\\
&\Vert \mathcal{N}(u,v)-\mathcal{N}(\tilde{u},\tilde{v})\Vert_{\mathcal{X}(T)}\leq C \Vert (u,v)-(\tilde{u},\tilde{v})\Vert_{\mathcal{X}(T)}\notag\\
&\quad\quad\quad\times\bigl(\Vert (u,v)\Vert_{\mathcal{X}(T)}^{p-1}+\Vert (u,v)\Vert_{\mathcal{X}(T)}^{q-1}+\Vert (\tilde{u},\tilde{v})\Vert_{\mathcal{X}(T)}^{p-1}+\Vert (\tilde{u},\tilde{v})\Vert_{\mathcal{X}(T)}^{q-1}\bigr).\label{key422}
\end{align}
\end{proposition}
After establishing Proposition \ref{keyyypro}, Theorem \ref{theorem12} can be immediately obtained by following the proof method of Theorem \ref{theorem11} in Section \ref{prooftheorem2}. Therefore, we omit the details here.

It is worth mentioning that, since  the proof of Proposition \ref{keyyypro} is quite similar to that of Proposition \ref{keypro}, we will focus on outlining the main differences. Thus, we only present the details  on
Case 1: $\delta_1,\delta_2>(m+1)^2(n+2\sigma-1)^2, \sigma>1$.  
The other three cases
\begin{align*}
&\text{Case 2:}\ \delta_1=\bigl((m+1)(n+2\sigma-1)\bigr)^2, \ \delta_2>\bigl((m+1)(n+2\sigma-1)\bigr)^2,\ \sigma>1,\\
&\text{Case 3:}\ \delta_1>\bigl((m+1)(n+2\sigma-1)\bigr)^2,\ \delta_2>\bigl((m+1)(n+2\sigma-1)\bigr)^2,\ \sigma=1, \\
&\text{Case 4:}\ \delta_1=\bigl((m+1)(n+2\sigma-1)\bigr)^2, \ \delta_2>\bigl((m+1)(n+2\sigma-1)\bigr)^2,\ \sigma=1
\end{align*}
can be addressed analogously.

\begin{proof}[Proof of Proposition \ref{keyyypro}]
The estimates for $u^l(t,\cdot),v^l(t,\cdot)$ are exactly the same as (\ref{ull2hsigmaetc}) and (\ref{vll2hsigmaetc}), then
\begin{equation}\label{ulvllines}
\begin{aligned}
\Vert(u^l,v^l)\Vert_{\mathcal{X}(T)}&\lesssim \sup\limits_{t\in[1,T]}\big(t^{-\alpha_1}M_1
(t,u^l)+M_2(t,v^l)\big)\\
&\lesssim\sup\limits_{t\in[1,T]}\big(t^{-\alpha_1} \Vert (u_0,u_1)\Vert_{D^\sigma}+\Vert (v_0,v_1)\Vert_{D^\sigma}\big)\\
&\lesssim \Vert (u_0,u_1)\Vert_{D^\sigma}+\Vert (v_0,v_1)\Vert_{D^\sigma}.
\end{aligned}
\end{equation}
In view of $p\leq\tilde{p}$, we get
\begin{equation}\label{nonintegralindex}
(m+1)n+\beta_1+
(-(m+1)n-\beta_2+1)p\geq-1.
\end{equation}
By (\ref{defdef41}),  $M_2(\tau,v)\lesssim \Vert(u,v)\Vert_{\mathcal{X}(t)} (1\leq\tau< t)$ holds,
thus when $p<\tilde{p}$, we get
\begin{equation*}\label{th4case111}
\begin{aligned}
&\Vert H_1(v)(t,\cdot)\Vert_{L^2}\lesssim t^{-(m+1)\frac{n}{2}+\frac{\sqrt{\delta_1}-\mu_1+1}{2}}\int_1^t\tau^{(m+1)n+\beta_1+
(-(m+1)n-\beta_2+1)p}M_2(\tau,v)^pd\tau\\
&\lesssim t^{-(m+1)\frac{n}{2}+\frac{\sqrt{\delta_1}-\mu_1+1}{2}+(m+1)n+\beta_1+
(-(m+1)n-\beta_2+1)p+1}\Vert(u,v)\Vert_{\mathcal{X}(t)}^p.
\end{aligned}
\end{equation*}
Note $$(m+1)n+\beta_1+
(-(m+1)n-\beta_2+1)p+1=\alpha_1,$$ so
\begin{equation}\label{th4case111}
\begin{aligned}
&\Vert H_1(v)(t,\cdot)\Vert_{L^2}\lesssim t^{-(m+1)\frac{n}{2}+\frac{\sqrt{\delta_1}-\mu_1+1}{2}+\alpha_1}\Vert(u,v)\Vert_
{\mathcal{X}(t)}^p.
\end{aligned}
\end{equation}
And if $p=\tilde{p}$,
\begin{align}
\Vert H_1(v)(t,\cdot)\Vert_{L^2}&\lesssim t^{-(m+1)\frac{n}{2}+\frac{\sqrt{\delta_1}-\mu_1+1}{2}}\int_1^t\tau^{(m+1)n+\beta_1+
(-(m+1)n-\beta_2+1)p}M_2(\tau,v)^pd\tau\notag\\
&= t^{-(m+1)\frac{n}{2}+\frac{\sqrt{\delta_1}-\mu_1+1}{2}}\int_1^t\tau^{-1}M_2(\tau,v)^p
d\tau\notag\\
&\lesssim t^{-(m+1)\frac{n}{2}+\frac{\sqrt{\delta_1}-\mu_1+1}{2}}\log t\Vert(u,v)\Vert_
{\mathcal{X}(t)}^p\notag\\
&\lesssim t^{-(m+1)\frac{n}{2}+\frac{\sqrt{\delta_1}-\mu_1+1}{2}+\alpha_1}\Vert(u,v)\Vert_
{\mathcal{X}(t)}^p.\label{th4case112}
\end{align}
Continuing with the same method, we deduce  that if $p\leq\tilde{p}$,
\begin{align}
&\Vert\partial_tH_1(v)(t,\cdot)\Vert_{L^2}\lesssim t^{m-(m+1)(1+\frac{n}{2})+\frac{\sqrt{\delta_1}-\mu_1+1}{2}+\alpha_1}\Vert(u,v)\Vert_
{\mathcal{X}(t)}^p,\label{th4case12}\\
&\Vert H_1(v)(t,\cdot)\Vert_{\dot{H}^\sigma}\lesssim t^{-(m+1)(\sigma+\frac{n}{2})+\frac{\sqrt{\delta_1}-\mu_1+1}{2}+\alpha_1}\Vert(u,v)\Vert_
{\mathcal{X}(t)}^p,\label{th4case13}\\
&\Vert\partial_tH_1(v)(t,\cdot)\Vert_{\dot{H}^{\sigma-1}}\lesssim t^{m-(m+1)(\sigma+\frac{n}{2})+\frac{\sqrt{\delta_1}-\mu_1+1}{2}+\alpha_1}\Vert(u,v)\Vert_
{\mathcal{X}(t)}^p.\label{th4case14}
\end{align}
From (\ref{th4case111})-(\ref{th4case14}) and by the definition  of $M_1(t,H_1(v))$, we obtain
\begin{equation}\label{M111}
M_1(t,H_1(v))\lesssim t^{\alpha_1}\Vert(u,v)\Vert_
{\mathcal{X}(t)}^p.
\end{equation}

It is evident that if $p<\tilde{p}$,
\begin{equation*}
\begin{aligned}
&(m+1)n+\beta_2+(-(m+1)n-\beta_1+1+\alpha_1)q<-1\\
\iff &(m+1)n+\beta_2+\big(-(m+1)n-\beta_1+1+((m+1)n+\beta_2-1)(\tilde{p}-p)\big)q<-1\\
\iff &\frac{q+1}{pq-1}<\frac{(m+1)n+\beta_2-1}{2};
\end{aligned}
\end{equation*}
while if $p=\tilde{p}$, we have $(m+1)n+\beta_1+1=((m+1)n+\beta_2-1)p$, so
\begin{equation}\label{th4con1}
\begin{aligned}
&(m+1)n+\beta_2+(-(m+1)n-\beta_1+1+\alpha_1)q<-1\\
\iff &(m+1)n+\beta_2+(2+\epsilon-((m+1)n+\beta_2-1)p)q<-1\\
\iff &\frac{(1+\frac{\epsilon}{2})q+1}{pq-1}<\frac{(m+1)n+\beta_2-1}{2},
\end{aligned}
\end{equation}
owing to the condition $\frac{q+1}{pq-1}<\frac{(m+1)n+\beta_2-1}{2}$ in Theorem \ref{theorem12}, we can choose $\epsilon>0$ sufficiently small such that (\ref{th4con1}) holds. Based on the analysis above,  we conclude that if $p\leq\tilde{p}$, then
\begin{equation}\label{ingreinfty}
\resizebox{0.901\hsize}{!}{$
\begin{aligned}
\int_1^t\tau^{(m+1)n+
\beta_2+(-(m+1)n-\beta_1+1+\alpha_1)q}d\tau\leq\int_1^\infty\tau^{(m+1)n+
\beta_2+(-(m+1)n-\beta_1+1+\alpha_1)q}d\tau <\infty.
\end{aligned}$}
\end{equation}
Due to  $M_1(\tau,u)\lesssim\tau^{\alpha_1}\Vert(u,v)\Vert_{\mathcal{X}(t)} (1\leq\tau\leq t)$,  we get
\begin{equation}\label{th4case15}
\begin{aligned}
&\Vert H_2(u)(t,\cdot)\Vert_{L^2}\lesssim t^{-(m+1)\frac{n}{2}+\frac{\sqrt{\delta_2}-\mu_2+1}{2}}\int_1^t\tau^{(m+1)n+
\beta_2+(-(m+1)n-\beta_1+1)q}M_1(\tau,u)^qd\tau\\
&\lesssim t^{-(m+1)\frac{n}{2}+\frac{\sqrt{\delta_2}-\mu_2+1}{2}}\Vert(u,v)\Vert_
{\mathcal{X}(t)}^q\int_1^t\tau^{(m+1)n+
\beta_2+(-(m+1)n-\beta_1+1+\alpha_1)q}d\tau\\
&\lesssim t^{-(m+1)\frac{n}{2}+\frac{\sqrt{\delta_2}-\mu_2+1}{2}}\Vert(u,v)\Vert_
{\mathcal{X}(t)}^q.
\end{aligned}
\end{equation}
Analogously, we derive
\begin{align}
&\Vert \partial_tH_2(u)(t,\cdot)\Vert_{L^2}\lesssim t^{m-(m+1)(1+\frac{n}{2})+\frac{\sqrt{\delta_2}-\mu_2+1}{2}}\Vert(u,v)\Vert_
{\mathcal{X}(t)}^q,\label{th4case16}\\
&\Vert H_2(u)(t,\cdot)\Vert_{\dot{H}^\sigma}\lesssim t^{-(m+1)(\sigma+\frac{n}{2})+\frac{\sqrt{\delta_2}-\mu_2+1}{2}}\Vert(u,v)\Vert_
{\mathcal{X}(t)}^q,\label{th4case17}\\
&\Vert \partial_tH_2(u)(t,\cdot)\Vert_{\dot{H}^{\sigma-1}}\lesssim t^{m-(m+1)(\sigma+\frac{n}{2})+\frac{\sqrt{\delta_2}-\mu_2+1}{2}}\Vert(u,v)\Vert_
{\mathcal{X}(t)}^q.\label{th4case18}
\end{align}
As a result, (\ref{th4case15})-(\ref{th4case18}) give
\begin{equation}\label{M112}
M_2(t,H_2(u))\lesssim \Vert(u,v)\Vert_
{\mathcal{X}(t)}^p.
\end{equation}
Combining (\ref{M111}) and (\ref{M112}) yields
\begin{equation}\label{un2}
\Vert \big(H_1(v),H_2(u)\big)\Vert_{\mathcal{X}(T)}\lesssim \Vert(u,v)\Vert_
{\mathcal{X}(T)}^p+\Vert(u,v)\Vert_
{\mathcal{X}(T)}^q,
\end{equation}
together with (\ref{ulvllines}), we get the validness of (\ref{key421}).

In view of the definition of the norm of $\mathcal{X}(t)$, we have $M_2(\tau,v)\lesssim\Vert(u,v)\Vert_{\mathcal{X}(t)}$, $M_2(\tau,\tilde{v})\lesssim\Vert(\tilde{u},\tilde{v})\Vert_{\mathcal{X}(t)}$ and $M_2(\tau,v-\tilde{v})\lesssim\Vert(u,v)-(\tilde{u},\tilde{v})\Vert_{\mathcal{X}(t)}$. Following the analysis of (\ref{th4case111}) and (\ref{th4case112}), we obtain
\begin{equation}\label{u-vcase11}
\resizebox{0.88\hsize}{!}{$
\begin{aligned}
&\Vert H_1(v)(t,\cdot)-H_1(\tilde{v})(t,\cdot)\Vert_{L^2}\lesssim t^{-(m+1)\frac{n}{2}+\frac{\sqrt{\delta_1}-\mu_1+1}{2}}\\
&\quad\times\int_1^t\tau^{(m+1)n+\beta_1+
(-(m+1)n-\beta_2+1)p} M_2(\tau,v-\tilde{v})\big(M_2(\tau,v)^{p-1}+M_2(\tau,\tilde{v})^{p-1}\big)d\tau\\
&\lesssim t^{-(m+1)\frac{n}{2}+\frac{\sqrt{\delta_1}-\mu_1+1}{2}+\alpha_1}\Vert(u,v)-(\tilde{u},\tilde{v})\Vert_{\mathcal{X}(t)}
\big(\Vert(u,v)\Vert_{\mathcal{X}(t)}^{p-1}+\Vert(\tilde{u},\tilde{v})\Vert_{\mathcal{X}
(t)}^{p-1}\big),
\end{aligned}$}
\end{equation}
\begin{align}
&\Vert \partial_t\big(H_1(v)(t,\cdot)-H_1(\tilde{v})(t,\cdot)\big)\Vert_{L^2}\notag\\
&\lesssim t^{m-(m+1)(1+\frac{n}{2})+\frac{\sqrt{\delta_1}-\mu_1+1}{2}+\alpha_1}\Vert(u,v)-(\tilde{u},\tilde{v})\Vert_{\mathcal{X}(t)}
\big(\Vert(u,v)\Vert_{\mathcal{X}(t)}^{p-1}+\Vert(\tilde{u},\tilde{v})\Vert_{\mathcal{X}
(t)}^{p-1}\big),\label{u-vcase12}\\
&\Vert H_1(v)(t,\cdot)-H_1(\tilde{v})(t,\cdot)\Vert_{\dot{H}^\sigma}\notag\\
&\lesssim t^{-(m+1)(\sigma+\frac{n}{2})+\frac{\sqrt{\delta_1}-\mu_1+1}{2}+\alpha_1}\Vert(u,v)-(\tilde{u},\tilde{v})\Vert_{\mathcal{X}(t)}
\big(\Vert(u,v)\Vert_{\mathcal{X}(t)}^{p-1}+\Vert(\tilde{u},\tilde{v})\Vert_{\mathcal{X}
(t)}^{p-1}\big),\label{u-vcase13}\\
&\Vert \partial_t\big(H_1(v)(t,\cdot)-H_1(\tilde{v})(t,\cdot)\big)\Vert_{\dot{H}^{\sigma-1}}\notag\\
&\lesssim t^{m-(m+1)(\sigma+\frac{n}{2})+\frac{\sqrt{\delta_1}-\mu_1+1}{2}+\alpha_1}\Vert(u,v)-(\tilde{u},\tilde{v})\Vert_{\mathcal{X}(t)}
\big(\Vert(u,v)\Vert_{\mathcal{X}(t)}^{p-1}+\Vert(\tilde{u},\tilde{v})\Vert_{\mathcal{X}
(t)}^{p-1}\big).\label{u-vcase14}
\end{align}
Consequently, (\ref{u-vcase11})-(\ref{u-vcase14}) yield
\begin{equation}\label{1case01}
\resizebox{0.88\hsize}{!}{$
M_1(t,H_1(v)-H_1(\tilde{v}))\lesssim t^{\alpha_1}\Vert(u,v)-(\tilde{u},\tilde{v})\Vert_{\mathcal{X}(t)}
\big(\Vert(u,v)\Vert_{\mathcal{X}(t)}^{p-1}+\Vert(\tilde{u},\tilde{v})\Vert_{\mathcal{X}
(t)}^{p-1}\big).$}
\end{equation}
Note that \begin{align*}
&M_1(\tau,u-\tilde{u})\lesssim\tau^{\alpha_1}\Vert(u,v)-(\tilde{u},\tilde{v})
\Vert_{\mathcal{X}(t)}, \\
&M_1(\tau,u)\lesssim \tau^{\alpha_1}\Vert(u,v)\Vert_{\mathcal{X}(t)},\\
&M_1(\tau,\tilde{u})\lesssim \tau^{\alpha_1}\Vert(\tilde{u},\tilde{v})\Vert_{\mathcal{X}(t)},
\end{align*}
and by the analysis of (\ref{ingreinfty}), we have
\begin{align}
&\Vert H_2(u)(t,\cdot)-H_2(\tilde{u})(t,\cdot)\Vert_{L^2}\lesssim t^{-(m+1)\frac{n}{2}+\frac{\sqrt{\delta_2}-\mu_2-1}{2}}\int_1^t\tau^{(m+1)n+
\beta_2+(-(m+1)n-\beta_1+1)q}\notag\\
&\quad\quad\quad\quad\quad\quad\times M_1(\tau,u-\tilde{u})\big(M_1(\tau,u)^{q-1}+M_1(\tau,\tilde{u})^{q-1}\big)d\tau\notag\\
&\lesssim t^{-(m+1)\frac{n}{2}+\frac{\sqrt{\delta_2}-\mu_2-1}{2}}\int_1^t\tau^{(m+1)n+
\beta_2+(-(m+1)n-\beta_1+1+\alpha_1)q}d\tau\notag\\
&\quad\quad\quad\quad\quad\quad\quad\quad\times\Vert(u,v)-(\tilde{u},\tilde{v})
\Vert_{\mathcal{X}(t)}\big(\Vert(u,v)\Vert_{\mathcal{X}(t)}^{q-1}+
\Vert(\tilde{u},\tilde{v})\Vert_{\mathcal{X}(t)}^{q-1}\big)\notag\\
&\lesssim t^{-(m+1)\frac{n}{2}+\frac{\sqrt{\delta_2}-\mu_2-1}{2}}\Vert(u,v)-(\tilde{u},\tilde{v})
\Vert_{\mathcal{X}(t)}\big(\Vert(u,v)\Vert_{\mathcal{X}(t)}^{q-1}+
\Vert(\tilde{u},\tilde{v})\Vert_{\mathcal{X}(t)}^{q-1}\big),\label{1case115}\\
&\Vert \partial_t\big(H_2(u)(t,\cdot)-H_2(\tilde{u})(t,\cdot)\big)\Vert_{L^2}\notag\\
&\lesssim t^{m-(m+1)(1+\frac{n}{2})+\frac{\sqrt{\delta_2}-\mu_2+1}{2}}\Vert(u,v)-(\tilde{u},\tilde{v})
\Vert_{\mathcal{X}(t)}\big(\Vert(u,v)\Vert_{\mathcal{X}(t)}^{q-1}+
\Vert(\tilde{u},\tilde{v})\Vert_{\mathcal{X}(t)}^{q-1}\big),\label{1case116}\\
&\Vert H_2(u)(t,\cdot)-H_2(\tilde{u})(t,\cdot)\Vert_{\dot{H}^\sigma}\notag\\
&\lesssim t^{-(m+1)(\sigma+\frac{n}{2})+\frac{\sqrt{\delta_2}-\mu_2-1}{2}}\Vert(u,v)-(\tilde{u},\tilde{v})
\Vert_{\mathcal{X}(t)}\big(\Vert(u,v)\Vert_{\mathcal{X}(t)}^{q-1}+
\Vert(\tilde{u},\tilde{v})\Vert_{\mathcal{X}(t)}^{q-1}\big),\label{1case117}\\
&\Vert \partial_t\big(H_2(u)(t,\cdot)-H_2(\tilde{u})(t,\cdot)\big)\Vert_{\dot{H}^{\sigma-1}}\notag\\
&\lesssim t^{m-(m+1)(\sigma+\frac{n}{2})+\frac{\sqrt{\delta_2}-\mu_2-1}{2}}\Vert(u,v)-(\tilde{u},\tilde{v})
\Vert_{\mathcal{X}(t)}\big(\Vert(u,v)\Vert_{\mathcal{X}(t)}^{q-1}+
\Vert(\tilde{u},\tilde{v})\Vert_{\mathcal{X}(t)}^{q-1}\big).\label{1case118}
\end{align}
Then (\ref{1case115})-(\ref{1case118}) show that
\begin{equation}\label{1case1101}
M_2(t,H_2(u)-H_2(\tilde{u}))\lesssim \Vert(u,v)-(\tilde{u},\tilde{v})
\Vert_{\mathcal{X}(t)}\big(\Vert(u,v)\Vert_{\mathcal{X}(t)}^{q-1}+
\Vert(\tilde{u},\tilde{v})\Vert_{\mathcal{X}(t)}^{q-1}\big).
\end{equation}

It follows from (\ref{1case01}) and (\ref{1case1101}) that (\ref{key422}) holds.
\end{proof}

\begin{remark}\label{re444}
In the case of  $\delta_2=(m+1)^2(n+2\sigma-1)^2$, we see that $\mathcal{l}_2(\tau)=(1+\log \tau)^{\frac{1}{2}}$. Following the above process, we get
$$\Vert H_1(v)(t,\cdot)\Vert_{L^2}\lesssim t^{-(m+1)\frac{n}{2}+\frac{\sqrt{\delta_1}-\mu_1+1}{2}}\Vert(u,v)
\Vert_{\mathcal{X}(t)}^p\times(I),$$
where
\begin{equation}\label{I}
(I):=\int_1^t\tau^{(m+1)n+\beta_1+(-(m+1)n-\beta_2+1)p}(1+\log\tau)
^{\frac{p\theta_1(2p)}{2}}d\tau.
\end{equation}
In view of $(m+1)n+\beta_1+(-(m+1)n-\beta_2+1)p\geq-1$ under the condition $p\leq\tilde{p}$, the  occurrence  of $(1+\log\tau)
^{\frac{p\theta_1(2p)}{2}}$ in (I) prevents us from handling the integral in the manner outlined in (\ref{th4case111}). In the estimates of $\Vert H_1(v)(t,\cdot)\Vert_{\dot{H}^\sigma}$, $\Vert \partial_tH_1(v)(t,\cdot)\Vert_{L^{2}}$, $\Vert \partial_tH_1(v)(t,\cdot)\Vert_{\dot{H}^{\sigma-1}}$, $\Vert H_1(v)(t,\cdot)-H_1(\tilde{v})(t,\cdot)\Vert_{L^2}$, $\Vert H_1(v)(t,\cdot)-H_1(\tilde{v})(t,\cdot)\Vert_{\dot{H}^\sigma}$, $\Vert \partial_t\big(H_1(v)(t,\cdot)-H_1(\tilde{v})(t,\cdot)\big)\Vert_{L^2}$, $\Vert \partial_t\big(H_1(v)(t,\cdot)-H_1(\tilde{v})(t,\cdot)\big)\Vert_{\dot{H}^{\sigma-1}}$,  the factor $(1+\log\tau)$ will still appear, which is beyond the current technical capabilities of this paper. Therefore, in Theorem \ref{theorem12}, we do not consider the case of $\delta_2=(m+1)^2(n+2\sigma-1)^2$.

\end{remark}

\subsection{The proof of Theorem \ref{theorem13}}\label{proofthm13}
By exchanging the positions of $p$ and $q$ in the proof process of Theorem \ref{theorem12}, we can immediately obtain  Theorem  \ref{theorem13}, so we will not elaborate on the proof of Theorem \ref{theorem13} any further.

\section{Global existence of  solutions with low regularity of initial data}\label{section5}
In this section, we establish the global existence  of the solution  with low regularity of initial data, specifically Theorems \ref{theorem21}-\ref{theorem23}. The proofs  are similar to those of Theorems \ref{theorem11}-\ref{theorem13}. Here, we briefly outline the differences. For specific details, one can refer to Sections \ref{prooftheorem2}-\ref{proofthm13},  according to the correspondence between the three groups of theorems as in Table \ref{tab1}.
\begin{table}[ht]
\centering
\begin{tabular}{c}
\toprule
\ Theorem \ref{theorem11} \quad$\longleftrightarrow$\quad Theorem \ref{theorem21} \quad \\
\ Theorem \ref{theorem12} \quad$\longleftrightarrow$\quad Theorem \ref{theorem22} \quad  \\
\ Theorem \ref{theorem13} \quad$\longleftrightarrow$\quad Theorem \ref{theorem23} \quad  \\
\bottomrule
\end{tabular}
\vspace{2pt}
\caption{}\label{tab1}
\end{table}

We need to make the corresponding modifications to the function space and its norm defined in Section \ref{prooftheorem2}.
For $\sigma\in(0,1)$, let
\begin{align}
W_1(t,u)=&t^{-\frac{\sqrt{\delta_1}-\mu_1+1}{2}+(m+1)\frac{n}{2}}\Vert u(t,\cdot)\Vert_{L^2}\notag+t^{-\frac{\sqrt{\delta_1}-\mu_1+1}{2}+(m+1)(\sigma+\frac{n}{2})}\ell_1^{-1}(t)\Vert u(t,\cdot)\Vert_{\dot{H}^\sigma},\notag\\
W_2(t,v)=&t^{-\frac{\sqrt{\delta_2}-\mu_2+1}{2}+(m+1)\frac{n}{2}}\Vert v(t,\cdot)\Vert_{L^2}+t^{-\frac{\sqrt{\delta_2}-\mu_2+1}{2}+(m+1)(\sigma+\frac{n}{2})}\ell_2^{-1}(t)\Vert v(t,\cdot)\Vert_{\dot{H}^\sigma},\notag
\end{align}
where $\ell_i(t) (i=1,2)$ are shown in (\ref{l(t)}).
Define the function space
\begin{equation*}
\mathcal{Y}(T):=\Big\{(u,v)\in\Big(C\bigl([1,T];H^\sigma\bigr)\Big)^2\ \ \text{such that}\ \ \text{supp}\big(u(t,\cdot),v(t,\cdot)\big)\subset B_{\phi_m(t)-\phi_m(1)+M}\Big\}
\end{equation*}
equipped with the norm
\begin{equation}\label{def51functionspace}
\Vert(u,v)\Vert_{\mathcal{Y}(T)}=\sup\limits_{t\in[1,T]}\big(t^{-\alpha_1}W_1(t,u)+t^{-\alpha_2}W_2(t,v)\big),
\end{equation}
here $M>0$,  $\alpha_i ,i=1,2$ are the same as (\ref{alpha1}) and (\ref{alpha2}) and $\phi_m(t)$ is defined by (\ref{compact2}).

By following the proof  of Lemma \ref{foures} and Lemma \ref{8cases}, we obtain
\begin{lemma}\label{lem5.1}
In the case of $\delta_1,\delta_2\geq(m+1)^2(n+2\sigma-1)^2$, $p,q>1$, for any $1\leq\tau< T$ and any $(u,v),(\tilde{u},\tilde{v})\in \mathcal{Y}(T)$,   we have
\begin{equation*}
\begin{aligned}
&\Vert \vert v(\tau,\cdot)\vert^p\Vert_{L^1}\lesssim \tau^{(m+1)n+(-(m+1)n-\beta_2+1)p}\ell_2(\tau)^{p\theta_1(2p)}W_2(\tau,v)^p,\\
&\Vert \vert v(\tau,\cdot)\vert^p\Vert_{L^2}\lesssim\tau^{(m+1)\frac{n}{2}+(-(m+1)n-\beta_2+1)p}\ell_2
(\tau)^{p\theta_1(2p)}W_2(\tau,v)^p,\\
&\Vert \vert u(\tau,\cdot)\vert^q\Vert_{L^1}\lesssim \tau^{(m+1)n+(-(m+1)n-\beta_1+1)q}\ell_1(\tau)^{q\theta_1(2q)}W_1(\tau,u)^q,\label{uql1}\\
&\Vert \vert u(\tau,\cdot)\vert^q\Vert_{L^2}\lesssim\tau^{(m+1)\frac{n}{2}+(-(m+1)n-\beta_1+1)q}\ell_1
(\tau)^{q\theta_1(2q)}W_1(\tau,u)^q
\end{aligned}
\end{equation*}
and
\begin{equation*}
\begin{aligned}
&\Vert\vert v(\tau,\cdot)\vert^p-\vert \tilde{v}(\tau,\cdot)\vert^p\Vert_{L^1}\\
&\lesssim\tau^{(m+1)n+(-(m+1)n-\beta_2+1)p}
\ell_2(\tau)^{p\theta_1(2p)}W_2(\tau,v-\tilde{v})
\big(W_2(\tau,v)^{p-1}+W_2(\tau,\tilde{v})^{p-1}\big),\\
&\Vert\vert v(\tau,\cdot)\vert^p-\vert \tilde{v}(\tau,\cdot)\vert^p\Vert_{L^2}\\
&\lesssim\tau^{(m+1)\frac{n}{2}+(-(m+1)n-\beta_2+1)p}
\ell_2(\tau)^{p\theta_1(2p)}W_2(\tau,v-\tilde{v})
\big(W_2(\tau,v)^{p-1}+W_2(\tau,\tilde{v})^{p-1}\big),\\
&\Vert\vert u(\tau,\cdot)\vert^q-\vert \tilde{u}(\tau,\cdot)\vert^q\Vert_{L^1}\\
&\lesssim\tau^{(m+1)n+(-(m+1)n-\beta_1+1)q}
\ell_1(\tau)^{q\theta_1(2q)}W_1(\tau,u-\tilde{u})
\big(W_1(\tau,u)^{q-1}+W_1(\tau,\tilde{u})^{q-1}\big),\\
&\Vert\vert u(\tau,\cdot)\vert^q-\vert \tilde{u}(\tau,\cdot)\vert^q\Vert_{L^2}\\
&\lesssim\tau^{(m+1)\frac{n}{2}+(-(m+1)n-\beta_1+1)q}
\ell_1(\tau)^{q\theta_1(2q)}W_1(\tau,u-\tilde{u})
\big(W_1(\tau,u)^{q-1}+W_1(\tau,\tilde{u})^{q-1}\big),
\end{aligned}
\end{equation*}
where $\theta_1, \beta_i ,i=1,2$ are defined by (\ref{theta1}) and (\ref{E}), respectively.
\end{lemma}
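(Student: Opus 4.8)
The plan is to prove Lemma \ref{lem5.1} as a streamlined, low-regularity counterpart of Lemmas \ref{foures} and \ref{8cases}. The decisive simplification is that for $\sigma\in(0,1)$ the solution space $\mathcal{Y}(T)$ records only the $L^2$ and $\dot H^\sigma$ norms of $u,v$ (there are no time-derivative or $\dot H^{\sigma-1},\dot H^{\sigma-2}$ components), so only the $L^1$ and $L^2$ bounds on the nonlinearities are required; the more delicate fractional-derivative estimates of Lemmas \ref{foures}--\ref{8cases}, which invoke \eqref{corol2}--\eqref{corol23} and \eqref{u-tildeu2}--\eqref{u-tildeu222}, are simply not needed here. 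Thus I would organize the argument into the four single-function estimates followed by the four difference estimates, handling $|v|^p$ in detail and deducing the $|u|^q$ statements by the obvious symmetry $p\leftrightarrow q$, $\beta_2\leftrightarrow\beta_1$, $W_2\leftrightarrow W_1$.

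First I would treat $\||v(\tau,\cdot)|^p\|_{L^2}=\|v(\tau,\cdot)\|_{L^{2p}}^p$ by the Gagliardo--Nirenberg inequality \eqref{corol1} with $r=2p$, namely $\|v\|_{L^{2p}}\lesssim\|v\|_{\dot H^\sigma}^{\theta_1(2p)}\|v\|_{L^2}^{1-\theta_1(2p)}$, and then substitute the two decay factors read off from the definition of $W_2(\tau,v)$. The $L^1$ bound then follows from H\"older's inequality together with the support property $\mathrm{supp}\,v(\tau,\cdot)\subset B_{\phi_m(\tau)-\phi_m(1)+M}$, whose measure is $\lesssim\tau^{(m+1)n}$, giving $\||v|^p\|_{L^1}\lesssim\tau^{(m+1)\frac n2}\||v|^p\|_{L^2}$. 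For the difference estimates I would start from the Lipschitz-type inequality \eqref{u-tildeu} of Lemma \ref{key2lemmaa} with $s=p$, $r=2$ (and $r=1$ combined with the support bound for the $L^1$ case), and feed in the same $W_2$-decay factors for the three quantities $v-\tilde v$, $v$, $\tilde v$; this reproduces the stated bounds with the extra factor $\bigl(W_2(\tau,v)^{p-1}+W_2(\tau,\tilde v)^{p-1}\bigr)$.

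The remaining work is the exponent bookkeeping, for which I would record the identity $\tfrac{\sqrt{\delta_i}-\mu_i+1}{2}=1-\beta_i$ (immediate from \eqref{E}) together with $(m+1)\sigma\,p\,\theta_1(2p)=(m+1)n\bigl(\tfrac p2-\tfrac12\bigr)$, which follows from $\theta_1(2p)=(\tfrac12-\tfrac1{2p})\tfrac n\sigma$ in \eqref{theta1}; collecting terms then yields exactly the advertised time-exponent $(m+1)\tfrac n2+(-(m+1)n-\beta_2+1)p$ for the $L^2$ estimate, and $(m+1)n+(-(m+1)n-\beta_2+1)p$ for the $L^1$ estimate, with the power $\ell_2(\tau)^{p\theta_1(2p)}$ carried along from the $\dot H^\sigma$ factor. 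The one genuine point that must be checked, rather than computed, is the \emph{admissibility} of the interpolation: the Gagliardo--Nirenberg exponent requires $\theta_1(2p)\in[0,1]$ and $\theta_1(2q)\in[0,1]$. Since $p,q\ge 1$ gives nonnegativity automatically, the binding constraint is $\theta_1(2p)\le 1\Longleftrightarrow p\le\frac{n}{n-2\sigma}$ (and likewise for $q$) when $n>2\sigma$, which is precisely the hypothesis imposed in Theorems \ref{theorem21}--\ref{theorem23}; when $n\le 2\sigma$ no upper bound on $p,q$ is needed. Verifying this admissibility is the main obstacle, and it is exactly what distinguishes the low-regularity range $p,q\le\frac{n}{n-2\sigma}$ from the high-regularity range $p,q\le 1+\frac{2}{n-2\sigma}$ used in Lemma \ref{foures}.
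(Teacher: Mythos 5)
Your proposal is correct and follows essentially the same route as the paper, which proves Lemma \ref{lem5.1} simply by repeating the arguments of Lemmas \ref{foures} and \ref{8cases}: the Gagliardo--Nirenberg bound (\ref{corol1}) with $r=2p$ for the $L^2$ estimates, the support property plus H\"older for the $L^1$ estimates, the Lipschitz-type bound (\ref{u-tildeu}) with $s=p$, $r=2$ for the differences, the same exponent bookkeeping via $\frac{\sqrt{\delta_i}-\mu_i+1}{2}=1-\beta_i$, and the admissibility check $\theta_1(2p),\theta_1(2q)\in[0,1]$ recorded in Remark \ref{pqexsit}. One small caution: in the $L^1$ difference estimates you should reduce to the $L^2$ case via the support/H\"older trick (exactly as you did for $\Vert\vert v\vert^p\Vert_{L^1}$) rather than invoking (\ref{u-tildeu}) with $r=1$, since $\theta_1(p)$ is negative when $1<p<2$ and the resulting power of $\ell_2(\tau)$ would not match the one in the statement.
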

\begin{remark}\label{pqexsit}
We point out that the conditions
\begin{equation}\label{ensure}
\left
\{
\begin{aligned}
&1< p,q\ &\quad& n\leq 2\sigma,\\
&1< p,q\leq \frac{n}{n-2\sigma},\ &\quad& n> 2\sigma
\end{aligned}
\right.
\end{equation}
in Theorems \ref{theorem21}-\ref{theorem23}  can ensure that $\theta_1(2p),\theta_1(2q)\in[0,1]$.
\end{remark}
Following Lemma \ref{lem5.1}  and  the proof techniques of Propositions \ref{keypro}-\ref{keyyypro}, we can demonstrate that the following conclusions hold under the conditions of Theorems \ref{theorem21}-\ref{theorem23}, respectively.
\begin{proposition}\label{sec5keyconsi}
There exists a constant $C>0$ such that for any $T>1$  and any $(u,v), (\tilde{u},\tilde{v})\in \mathcal{Y}(T)$, we have
\begin{align}
&\Vert \mathcal{N}(u,v)\Vert_{\mathcal{Y}(T)}\leq C\bigl(\Vert(u_0,u_1)\Vert_{D^\sigma}+\Vert(v_0,v_1)\Vert_{D^\sigma}\big)+C\big(\Vert (u,v)\Vert_{\mathcal{Y}{(T)}}^p+\Vert (\tilde{u},\tilde{v})\Vert_{\mathcal{Y}{(T)}}^q\bigr),\label{kkkkey1}\\
&\Vert \mathcal{N}(u,v)-\mathcal{N}(\tilde{u},\tilde{v})\Vert_{\mathcal{Y}(T)}\leq C \Vert (u,v)-(\tilde{u},\tilde{v})\Vert_{\mathcal{Y}(T)}\notag\\
&\quad\quad\quad\times\bigl(\Vert (u,v)\Vert_{\mathcal{Y}(T)}^{p-1}+\Vert (u,v)\Vert_{\mathcal{Y}(T)}^{q-1}+\Vert (\tilde{u},\tilde{v})\Vert_{\mathcal{Y}(T)}^{p-1}+\Vert (\tilde{u},\tilde{v})\Vert_{\mathcal{Y}(T)}^{q-1}\bigr),\label{kkkkey2}
\end{align}
where $\mathcal{N}$ is given by (\ref{operator}).
\end{proposition}

With proposition \ref{sec5keyconsi}, we can use the contraction mapping principle to give the global existence. 

\section*{Appendix}
\setcounter{theorem}{0}
\renewcommand{\thetheorem}{A.\arabic{theorem}}
\setcounter{lemma}{0}
\renewcommand{\thelemma}{A.\arabic{lemma}}
\setcounter{equation}{0}
\renewcommand{\theequation}{A.\arabic{equation}}
\renewcommand{\theremark}{A.\arabic{equation}}

The proof framework and methods for the local solutions, namely, Proposition \ref{localexistence}, are comparable to those for the global solutions, so we give a brief outline. For $\sigma>1$, let
\begin{equation*}
\begin{aligned}
&X(T):=\Big\{(u,v)\in(C\bigl([1,T];H^\sigma\bigr)\cap C^1\bigl([1,T];H^{\sigma-1}\bigr)\Big)^2\\
&\quad\quad\quad\quad\quad\quad \text{such that}\ \ \text{supp}\big(u(t,\cdot),v(t,\cdot)\big)\subset B_{\phi_m(t)-\phi_m(1)+M}\Big\},
\end{aligned}
\end{equation*}
and for $0<\sigma<1$,
\begin{equation*}
X(T):=\Big\{(u,v)\in\Big(C\bigl([1,T];H^\sigma\bigr)\Big)^2\ \ \text{such that}\ \ \text{supp}\big(u(t,\cdot),v(t,\cdot)\big)\subset B_{\phi_m(t)-\phi_m(1)+M}\Big\}
\end{equation*}
with the norm $\Vert (u,v)\Vert_{X(T)}=\max\limits_{t\in[1,T]} \big(Q[u](t)+Q[v](t)\big)$, where
\begin{equation*}
Q[w](t)=\left
\{
\resizebox{0.86\hsize}{!}{$
\begin{aligned}
&\Vert w(t,\cdot)\Vert_{L^2}+\Vert w(t,\cdot)\Vert_{\dot{H}^{\sigma}}+\Vert\partial_tw(t,\cdot)\Vert_{L^2}+\Vert\partial_tw(t,\cdot)\Vert_{\dot{H}^{\sigma-1}}, &\ & \sigma\geq1,\\
&\Vert w(t,\cdot)\Vert_{L^2}+\Vert w(t,\cdot)\Vert_{\dot{H}^{\sigma}},&\ & \sigma\in (0,1).
\end{aligned}$}
\right.
\end{equation*}
For $T>1$, introduce the space $X(T,K)=\{(u,v)\in X(T): \Vert (u,v)\Vert_{X(T)}\leq K\}$ and the operator $\mathcal{N}$ as in (\ref{operator}), where  and $K>0$ is determined later. Following the proof of Proposition \ref{keypro}, we can prove
\begin{lemma}
For any $(u,v)$ and $(\tilde{u},\tilde{v})\in X(T,K)$, the following estimates hold
\begin{align}
&\Vert\mathcal{N}(u,v)\Vert_{X(T)}\leq C_{1T}\big(\Vert u_0,u_1\Vert_{D^\sigma}+\Vert v_0,v_1\Vert_{D^\sigma}\big)+C_{2T}\big(\Vert (u,v)\Vert_{X(T)}^{p}+\Vert (u,v)\Vert_{X(T)}^{q}\big),\label{kkey1}\\
&\Vert\mathcal{N}(u,v)-\mathcal{N}(\tilde{u},\tilde{v})\Vert_{X(T)}\leq C_{3T}\Vert (u,v)-(\tilde{u},\tilde{v})\Vert_{X(T)}\notag\\
&\quad\quad\quad\quad\times\bigl(\Vert (u,v)\Vert_{X(T)}^{p-1}+\Vert (\tilde{u},\tilde{v})\Vert_{X(T)}^{p-1}+\Vert (u,v)\Vert_{X(T)}^{q-1}+\Vert (\tilde{u},\tilde{v})\Vert_{X(T)}^{q-1}\bigr),\label{kkey2}
\end{align}
where $C_{1T}$ is bounded,   and $C_{2T}, C_{3T}\rightarrow0$,  as $T\rightarrow 1^{+}$.
\end{lemma}
\begin{proof}[Proof of Proposition \ref{localexistence}]
By (\ref{kkey1}) and (\ref{kkey2}), for sufficiently large $K$,  we choose  $T$ sufficiently close to $1^+$ such that  $\mathcal{N}$ maps $X(T,K)$ into itself and  $\mathcal{N}$ is a contraction mapping. Thus, we establish the local existence and uniqueness of the solution to (\ref{eqs}) in $X(T)$.
\end{proof}

\section*{Acknowledgments}

This work is supported by the NSF of China (11731007), the Priority
Academic Program Development of Jiangsu Higher Education Institutions, the NSF of Jiangsu Province
(BK20221320).

\vskip 0.2cm

\renewcommand{\theequation}{A\arabic{equation}}
\setcounter{equation}{0}
%
%


\begin{thebibliography}{100}

\bibitem{AgKuTa2000} {\small \textsc{R. Agemi, Y.  Kurokawa, H. Takamura}, Critical curve for {$p$}-{$q$} systems of nonlinear wave
              equations in three space dimensions}, \textit{J. Differential Equations}, \textbf{167} (2000), 87-133.

















\bibitem{ChenPa2019} {\small \textsc{W. Chen, A. Palmieri,} Weakly coupled system of semilinear wave equations with
              distinct scale-invariant terms in the linear part}, \textit{Z. Angew. Math. Phys.}, \textbf{70} (2019), paper No. 67, 21.






















\bibitem{DaPa2021}{\small \textsc{M. D$^\prime$Abbicco, A. Palmieri}, A Note on $L^p$-$L^q$ estimates for semilinear critical dissipative
Klein-Gordon Equations}, \textit{Journal of Dynamics and Differential Equations}, \textbf{33} (2021), 63-74.

\bibitem{De1997} {\small \textsc{D. Del Santo}, Global existence and blow-up for a hyperbolic system in three space dimensions}, \textit{Rend. Ist. Mat. Univ. Trieste}, \textbf{29} (1997), 115-139.


\bibitem{DeGeMi1997} {\small \textsc{D. Del Santo, V. Georgiev,  E. Mitidieri}, Global existence of the solutions and formation of singularities for a class of hyperbolic systems}, \textit{Geometrical optics and related topics (Cortona, 1996), Progr. Nonlinear Differential Equations Appl., Birkh\"auser Boston, Boston, MA}, \textbf{32} (1997), 117-140.

\bibitem{DeMi1998} {\small \textsc{D. Del Santo, E. Mitidier}, Blow-up of solutions of a hyperbolic system: the critical case}, \textit{Differ. Uravn.}, \textbf{34} (1998), 1155-1161.



\bibitem{Fujita1969} {\small \textsc{H. Fujita}, On the blowing up of solutions of the Cauchy problem for $u_t=\Delta u+u^{1+\alpha}$}, \textit{J. Fac. Sci. Univ.}, \textbf{13} (1969), 109-124.





\bibitem{GeLiSo1997} {\small \textsc{V. Georgiev,  H.  Lindblad, C.D.Sogge}, Weighted Strichartz estimates and global existence for semilinear wave equations}, \textit{Amer. J. Math.}, \textbf{119} (1997), 1291-1319.


\bibitem{GeTaZh2006} {\small \textsc{V. Georgiev, H. Takamura, Y. Zhou}, The lifespan of solutions to nonlinear systems of a
              high-dimensional wave equation}, \textit{Nonlinear Anal.}, \textbf{64} (2006), 2215-2250.

\bibitem{Glassey1981} {\small \textsc{R.T. Glassey}, Existence in the large for $\Box u= F(u)$  in two space dimensions}, \textit{Math. Z.}, \textbf{178}  (1981), 233-261.

\bibitem{He2017II}{\small \textsc{D. He, I. Witt, H. Yin}, On semilinear Tricomi equations with critical
exponents or in two space dimensions}, \textit{ J. Differential Equations,} \textbf{263} (2017), 8102-8137.

\bibitem{He2017I}{\small \textsc{D. He, I. Witt, H. Yin}, On the global solution problem for semilinear generalized Tricomi equations, I}, \textit{Calc. Var. Partial Differential Equations,} \textbf{56} (2017), 1-24.

\bibitem{IkSoWa2019} {\small \textsc{M. Ikeda, M. Sobajima, K. Wakasa}, Blow-up phenomena of semilinear wave equations and their
weakly coupled systems}, \textit{J. Differential Equations}, \textbf{267}  (2019), 5165-5201.


\bibitem{IkTa2005} {\small \textsc{R. Ikehata, K. Tanizawa}, Global existence of solutions for semilinear damped wave equations in
RN with noncompactly supported initial data}, \textit{Nonlinear Anal.}, \textbf{61}  (2005), 1189-1208.




















\bibitem{John1979} {\small \textsc{F. John}, Blow-up of solutions of nonlinear wave equations in three space dimensions}, \textit{Proc. Nat. Acad. Sci. U.S.A.}, \textbf{28} (1979), 1559-1560.




\bibitem{KuTaWa2012}{\small \textsc{Y. Kurokawa, H.  Takamura, K. Wakasa}, The blow-up and lifespan of solutions to systems of semilinear
              wave equation with critical exponents in high dimensions}, \textit{Differential Integral Equations}, \textbf{25} (2012), 363-382.

\bibitem{LiGuo2025}{\small \textsc{Y. Li, F. Guo}, Global existence for the semilinear regular Euler-Poisson-Darboux-Tricomi equation}, \textit{J. Differential Equations}, \textbf{431} (2025), 113211.
















\bibitem{Mo2018}{\small \textsc{D.A. Mohammed}, On the benefit of different additional regularity for the
              weakly coupled systems of semilinear effectively damped waves},  \textit{Mediterr. J. Math.}, \textbf{15} (2018), Paper No. 115, 11.


\bibitem{MoRe2018}{\small \textsc{D.A. Mohammed, M. Reissig},Weakly coupled systems of semilinear effectively damped waves
              with time-dependent coefficient, different power
              nonlinearities and different regularity of the data}, \textit{Nonlinear Anal.}, \textbf{175} (2018), 28-55.

\bibitem{Nara2011}{\small \textsc{T. Narazaki}, Global solutions to the Cauchy problem for a system of damped wave equations}, \textit{Differential Integral Equations}, \textbf{24} (2015), 569-600.

  \bibitem{Nara2009}{\small \textsc{T. Narazaki}, Global solutions to the Cauchy problem for the weakly coupled system of damped wave equations}, \textit{Discrete Contin. Dyn. Syst.}, (2009), 592-601.



\bibitem{PaRe2017} {\small \textsc{W.N. Nascimento, A. Palmieri, M. Reissig}, Semi-linear wave models with power non-linearity and scale-invariant time-dependent mass and dissipation}, \textit{Math. Nachr.}, \textbf{290} (2017), 1779-1805.







\bibitem{NishWaka2014}{\small \textsc{K. Nishihara, Y. Wakasugi}, Critical exponent for the Cauchy problem to the weakly coupled damped wave system}, \textit{Nonlinear Anal.}, \textbf{108} (2014), 249-259.






\bibitem{NiWa2015}{\small \textsc{K. Nishihara, Y. Wakasugi}, Global existence of solutions for a weakly coupled system of semilinear damped wave equations}, \textit{J. Differential Equations}, \textbf{259} (2015), 4172-4201.


\bibitem{OgTa2010}{\small \textsc{T. Ogawa, H. Takeda}, Global existence of solutions for a system of nonlinear damped
              wave equations}, \textit{Differential Integral Equations}, \textbf{238} (2010), 635-657.







\bibitem{Pa2018} {\small \textsc{A. Palmieri}, Global existence of solutions for semi-linear wave equation with scaleinvariant damping and mass in exponentially weighted spaces}, \textit{J. Math. Anal. Appl.}, \textbf{461} (2018), 1215-1240.

\bibitem{Pa2025} {\small \textsc{A. Palmieri}, On the critical exponent for the semilinear Euler-Poisson-Darboux-Tricomi equation with power nonlinearity}, \textit{J. Differential Equations}
     \textbf{437} (2025), 113309.

\bibitem{PaRe2019} {\small \textsc{A. Palmieri, M. Reissig}, A competition between Fujita and Strauss type exponents for blow-up of semi-linear wave equations with scale-invariant damping and mass}, \textit{J. Differential Equations}, \textbf{266} (2019), 1176-1220.




\bibitem{PaRe2018} {\small \textsc{A. Palmieri, M. Reissig}, Semi-linear wave models with power non-linearity and scale-invariant time-dependent mass and dissipation, II}, \textit{Math. Nachr.}, \textbf{291} (2018), 1859-1892.




\bibitem{Tu2019} {\small \textsc{A. Palmieri, Z. Tu}, Lifespan of semilinear wave equation with scale invariant dissipation and mass and sub-Strauss power nonlinearity}, \textit{J. Math. Anal. Appl.}, \textbf{470} (2019), 447-469.


\bibitem{Strauss1981} {\small \textsc{W.A. Strauss }, Nonlinear scattering theory at low energy}, \textit{J. Funct. Anal.}, \textbf{41} (1981), 110-133.


\bibitem{SunWang2007} {\small \textsc{F. Sun, M. Wang}, Existence and nonexistence of global solutions for a nonlinear hyperbolic system with damping}, \textit{Nonlinear Anal.}, \textbf{66} (2007), 2889-2910.












\bibitem{Ta2001} {\small \textsc{D. Tataru}, Strichartz estimates in the hyperbolic space and global existence for the semilinear wave equation}, \textit{Trans. Amer. Math. Soc.}, \textbf{353} (2001), 795-807.

\bibitem{ToYo2001} {\small \textsc{G. Todorova, B. Yordanov}, Critical exponent for a nonlinear wave equation with damping, J. Differential Equations}, \textit{J. Differential Equations}, \textbf{174} (2001), 464-489.








\bibitem{Zh2006}{\small \textsc{B.T. Yordanov,  Q.S. Zhang}, Finite time blow up for critical wave equations in high dimensions}, \textit{J. Funct. Anal}., \textbf{231} (2006), 361-374.





\bibitem{Zh2001} {\small \textsc{Q.S. Zhang}, A blow-up result for a nonlinear wave equation with damping: the critical case}, \textit{C. R. Acad. Sci. Paris S\'{e}r. I Math}., \textbf{333} (2001), 109-114.

\bibitem{Zhou2007} {\small \textsc{Y. Zhou}, Blow up of solutions to semilinear wave equations with critical exponent in high dimensions}, \textit{Chin. Ann. Math.}, \textbf{28B} (2007), 205-212.

\bibitem{Zhou2014} {\small \textsc{Y. Zhou, W. Han}, Life-span of solutions to critical semilinear wave equations}, \textit{Comm. Partial Differential Equations}, \textbf{39} (2014), 439-451.

\end{thebibliography}
\end{document}